\documentclass[11pt]{article}


\usepackage{epsfig}
\usepackage{graphicx}
\usepackage{amsbsy}
\usepackage{amsmath}
\usepackage{amsfonts}
\usepackage{amssymb}
\usepackage{textcomp}
\usepackage{hyperref}
\usepackage{aliascnt}

\newcommand{\mcm}[3]{\newcommand{#1}[#2]{{\ensuremath{#3}}}} 

\mcm{\tuple}{1}{\langle #1 \rangle}
\mcm{\name}{1}{\ulcorner #1 \urcorner}
\mcm{\Nbb}{0}{\mathbb{N}}
\mcm{\Zbb}{0}{\mathbb{Z}}
\mcm{\Rbb}{0}{\mathbb{R}}
\mcm{\Cbb}{0}{\mathbb{C}}
\mcm{\Qbb}{0}{\mathbb{Q}}
\mcm{\Acal}{0}{\cal A}
\mcm{\Bcal}{0}{\cal B}
\mcm{\Ccal}{0}{\cal C}
\mcm{\Dcal}{0}{\cal D}
\mcm{\Ecal}{0}{\cal E}
\mcm{\Fcal}{0}{\cal F}
\mcm{\Gcal}{0}{\cal G}
\mcm{\Hcal}{0}{\cal H}
\mcm{\Ical}{0}{\cal I}
\mcm{\Jcal}{0}{\cal J}
\mcm{\Kcal}{0}{\cal K}
\mcm{\Lcal}{0}{\cal L}
\mcm{\Mcal}{0}{\cal M}
\mcm{\Ncal}{0}{\cal N}
\mcm{\Ocal}{0}{{\cal O}}
\mcm{\Pcal}{0}{{\cal P}}
\mcm{\Qcal}{0}{{\cal Q}}
\mcm{\Rcal}{0}{{\cal R}}
\mcm{\Scal}{0}{{\cal S}}
\mcm{\Tcal}{0}{{\cal T}}
\mcm{\Ucal}{0}{{\cal U}}
\mcm{\Vcal}{0}{{\cal V}}
\mcm{\Xcal}{0}{{\cal X}}
\mcm{\Ycal}{0}{{\cal Y}}
\mcm{\Mfrak}{0}{\mathfrak M}

\DeclareMathOperator{\Cl}{Cl}
\mcm{\restric}{0}{\upharpoonright}
\mcm{\upset}{0}{\uparrow}
\mcm{\onto}{0}{\twoheadrightarrow}
\mcm{\smallNbb}{0}{{\small \mathbb{N}}}
\DeclareMathOperator{\preop}{op}
\mcm{\op}{0}{^{\preop}}

\newcommand{\se}{\subseteq}
%
{\begin{array}{c}
\setlength{\unitlength}{1em}}%
{\end{array}}

\usepackage{amsthm}

\newcommand{\theoremize}[2]{\newaliascnt{#1}{thm} \newtheorem{#1}[#1]{#2} \aliascntresetthe{#1}}

\theoremstyle{plain}
\newtheorem{thm}{Theorem}[section]
\theoremize{lem}{Lemma}
\theoremize{sublem}{Sublemma}
\theoremize{claim}{Claim}
\theoremize{rem}{Remark}
\theoremize{prop}{Proposition}
\theoremize{cor}{Corollary}
\theoremize{que}{Question}
\theoremize{oque}{Open Question}
\theoremize{con}{Conjecture}

\theoremstyle{definition}
\theoremize{dfn}{Definition}
\theoremize{eg}{Example}
\theoremize{exercise}{Exercise}
\theoremstyle{plain}

\usepackage{verbatim}
\usepackage{enumerate}
\usepackage[all]{xy}

\usepackage{subfig}
\theoremstyle{definition}
\theoremize{constr}{Construction}
\theoremstyle{plain}

\title{\scshape On the intersection conjecture for infinite trees of matroids}

\author{Nathan Bowler and Johannes Carmesin}

\newcommand{\sm}{\setminus}

\newcommand{\ct}{^\complement}

\DeclareMathOperator{\Sp}{Sp}
\newcommand{\gov}{\overline \gamma}

\begin{document}

\maketitle
\begin{abstract}
 Using a new technique, we prove a rich family of special cases of the matroid intersection conjecture. Roughly, we prove the conjecture for pairs of tame matroids which have a common decomposition by 2-separations into finite parts.
\end{abstract}

\section{Introduction}

In 2009, Aharoni and Berger \cite{AharoniBerger} proved the following infinite version of Menger's theorem: For every graph $G$ and $A,B\se V(G)$, there is a set of vertex-disjoint $A$-$B$-paths together with an $A$-$B$-separator consisting of precisely one vertex from each of these paths.
This had been conjectured by Erd\H{o}s in the 1960s.
In \cite{union2}, it was shown that 
Aharoni and Berger's theorem would follow from the following conjecture due to 
Nash-Williams \cite{Aharoni:Ziv:92}.

\begin{con}[The Matroid Intersection Conjecture]\label{int}
Any two matroids $M$ and $N$ on a 
common ground set $E$ have a common independent set $I$ admitting a partition $I
= J_M \cup J_N$ such that $\Cl_{M}(J_M) \cup \Cl_{N}(J_N) = E$.
\end{con}

This conjecture was intended as a generalisation of Edmonds' well known Intersection 
Theorem.
When Nash-Williams first made this conjecture in 1990, he only had finitary matroids in mind. But in 2008, Bruhn, Diestel, Kriesell, Pendavingh and Wollan \cite{matroid_axioms} introduced several equivalent axiomatisations for infinite matroids, providing a foundation on which a theory of infinite matroids with duality can be built. 
We shall work with a slightly better behaved subclass of infinite matroids, called tame matroids. This class includes all finitary matroids and all the other motivating examples of infinite matroids but  
is easier to work with than the class of infinite matroids in general
\cite{BC:wild_matroids}, \cite{BC:determinacy}, \cite{BC:ubiquity}, \cite{THINSUMS}, \cite{BC:rep_matroids},  
 \cite{BCC:graphic_matroids}.

The Matroid Intersection Conjecture, if true in this wider context, would immediately allow generalisations of the Aharoni-Berger Theorem to areas such as topological infinite graph theory. 

We showed in \cite{BC:PC} that the Matroid Intersection Conjecture is equivalent to a number of natural generalisations of theorems from finite graph theory and matroid theory, such as the Base Packing and Base Covering theorems. We also showed that it is equivalent to a conjecture which unifies Base Packing and Base Covering.

Suppose we have a pair of matroids $(M, N)$ on the same ground set $E$.
A  {\em packing} for this family consists of disjoint spanning sets $S^M$ and $S^N$ for $M$ and $N$ respectively. 
Similarly, a {\em covering} consists of independent sets $I^M$ and $I^N$ for $M$ and $N$ respectively whose union is the whole edge set.
Our new conjecture says that 
 that the ground set can be partitioned into a part which is ``dense'', in the sense that it has a packing, and a part which is ``sparse'', in the sense that it has a covering:

\begin{con}[The Packing/Covering Conjecture]\label{pc}
Let $M$ and $N$ be tame matroids on the same ground set $E$. Then $E$ admits a partition $E=P\dot\cup Q$
such that $(M \restric_P, N \restric_P)$  has a packing and $(M.Q, N.Q)$ has a covering.
\end{con}

Here $M\restric_P$ is the restriction of $M$ to $P$ and $M.Q$ is the contraction of $M$ onto $Q$.
Note that if $(M \restric_P, N \restric_P)$ has a packing, then $(M.P, N.P)$ has a packing, so we get a stronger statement by taking the restriction here. Similarly, we get a stronger statement by contracting, rather than restricting, to $Q$.

This conjecture is known to be true for $M$ and $N$ both finitary\footnote{This does not imply the Matroid Intersection Conjecture for pairs of finitary matroids, but rather for pairs of matroids in which one is finitary and the other cofinitary. 
This is because Packing/Covering for $M$ and $N$ is equivalent to Intersection for $M$ and the dual of $N$. Thus for our approach it is essential to work with a notion of infinite matroids which is closed under duality.
}, or even just nearly finitary~\cite{union2}, or when $M$ is finitary and $N$ is a countable direct sum of matroids whose duals are of finite rank~\cite{Aharoni:Ziv:92}, or when each of $M$ and $N$ has only countably many circuits~\cite{BC:PC}.
Apart from these very special cases, the conjecture has so far remained wide open, and seems to be difficult even for very simple classes of examples.

A natural approach to the investigation of this conjecture is to work out what it implies for some elementary examples. In this paper, we employ a new technique to resolve the conjecture for one rich class of such examples, which we will introduce in the next section.
Roughly, we consider pairs of tame matroids which have a common decomposition by 2-separations into finite parts.

\section{Overview of the results}

Let $G$ be a locally finite graph. We denote the finite cycle matroid of $G$ by $M_{FC}(G)$ and the topological cycle matroid of $G$ (see \cite{RD:HB:graphmatroids}) by $M_{TC}(G)$,
whose circuits, called \emph{topological circuits}, are the edge sets of topological circles in the topological space given by the graph together with its ends. The Packing/Covering Conjecture is known to hold for the pair $(M_{FC}(G), M_{FC}(G))$ since both  matroids in this pair are finitary. Since the conjecture is self-dual, it also holds for the pair $(M_{TC}(G),M_{TC}(G))$, in which both matroids are cofinitary. But what about the pair $(M_{FC}(G), M_{TC}(G))$, in which the first matroid is finitary and the second cofinitary?

One approach to this question is to consider a tree-decomposition of $G$ into finite parts.
We might hope to stick together packings and coverings at the finite decomposition-parts to a packing and a covering for the whole graph.
This approach seems most hopeful if the adhesion, the largest size of an intersection between adjacent parts, is bounded. We will show the following result along these lines:

\begin{thm}\label{cor1}
Let $G$ be a locally finite graph with a tree-decomposition into finite parts of adhesion at most 2. Then the pair $(M_{FC}(G), M_{TC}(G))$
satisfies the Packing/Covering Conjecture.
\end{thm}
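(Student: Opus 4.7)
The plan is to reduce Theorem \ref{cor1} to the main theorem of this paper by exhibiting, from the given tree-decomposition of $G$, a common decomposition of the pair $(M_{FC}(G), M_{TC}(G))$ by 2-separations into finite parts.

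First I would check tameness: $M_{FC}(G)$ is finitary and hence tame, while $M_{TC}(G)$ is cofinitary (its cocircuits are the finite bonds of the locally finite graph $G$) and hence also tame. Next, from the tree-decomposition $(T, (V_t)_{t \in T})$ I would build, at each node $t$, a torso $G_t$ consisting of $G[V_t]$ together with, for each tree-edge $tt'$ of $T$, a virtual edge on the (at most two) vertices of the adhesion set $V_t \cap V_{t'}$. Since each bag is finite and $G$ is locally finite, every $G_t$ is a finite graph, so its cycle matroid $M(G_t)$ is a finite matroid; adhesion sets of size $0$ or $1$ can be dealt with separately as they correspond to direct sums rather than proper 2-sums.

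The key claim is then that, after identifying each pair of virtual edges across a tree-edge of $T$, both $M_{FC}(G)$ and $M_{TC}(G)$ arise as the tree-of-matroids built from the same family $(M(G_t))_{t\in T}$ along the same tree $T$ with the same gluing. For $M_{FC}(G)$ this is the standard Tutte-style 2-sum decomposition along adhesion-2 separations: a finite cycle of $G$ meets only finitely many bags and decomposes into local cycles in the torsos, glued through pairs of virtual edges. The same analysis handles finite circuits of $M_{TC}(G)$; an infinite topological circuit meets infinitely many bags, and the adhesion-2 condition forces it to follow an infinite path or double ray of $T$ that closes up at ends of $G$, which in the tree-of-matroids language is precisely an infinite circuit passing through a sequence of virtual edges along $T$.

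The main obstacle is verifying this last point rigorously: that the topological circuits of $G$ (including those closing up at ends) match exactly the circuits of the putative tree-of-matroids decomposition, and conversely that every such tree-circuit corresponds to a genuine topological circuit. This requires understanding how ends of $G$ correspond to ends of $T$ once we work in torsos of bounded adhesion, and this compatibility is what makes the finitary/cofinitary pair amenable to a single decomposition. Once this common decomposition is in place, the Packing/Covering Conjecture for $(M_{FC}(G), M_{TC}(G))$ follows immediately from the main theorem of the paper.
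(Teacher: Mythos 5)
Your proposal follows essentially the same route the paper uses: the paper does not prove Theorem~\ref{cor1} directly, but derives it from Theorem~\ref{cor2}, which in turn is reduced to Theorem~\ref{mainthm} by encoding $G$-matroids as trees of matroids of overlap~$1$. Your torso construction is the right common decomposition. A few points are worth tightening. First, the tree $T$ and the node matroids $\overline M(t)$ are the same for both matroids in the pair; the \emph{only} difference is the set of permitted ends, and you should say explicitly that you take $\Psi_M = \emptyset$ for $M_{FC}(G)$ and $\Psi_N = \Omega(T)$ for $M_{TC}(G)$ --- both of these are (trivially) Borel, so Theorem~\ref{mainthm} applies. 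Second, the definition of a tree of matroids of overlap~$1$ requires $|E(tt')| = 1$ \emph{exactly}; you must either insert a loop/coloop dummy pair at tree-edges with adhesion set of size $0$ or $1$ (or split off the corresponding direct summands), and you must make sure that any genuine edge of $G$ whose two endpoints both lie in an adhesion set is assigned to only one torso, so that $E(tt')$ consists of the virtual edge alone. Third, the correspondence you flag as the main obstacle --- that the $\emptyset$-circuits (resp.\ $\Omega(T)$-circuits) of the tree of matroids coincide with the finite cycles (resp.\ topological circuits) of $G$ --- is indeed the crux, and you have not verified it; the paper does not verify it either, but instead cites \cite{BC:ubiquity} and \cite{BC:determinacy} for precisely this identification of $G$-matroids with trees of matroids. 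So your outline is sound and matches the intended proof, but a self-contained version would have to supply that circuit correspondence rather than merely identify it as the remaining difficulty.
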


The matroids $M_{FC}(G)$ and $M_{TC}(G)$ are not the only cycle matroids associated to the graph $G$. There is a large range of tame matroids sitting between these two. We say a matroid $M$
on $E(G)$ is a $G$-matroid if it is tame and each finite circuit of $G$ is a circuit of $M$ and each
 finite bond of $G$ is a cocircuit of $M$. For example, $M_{FC}(G)$ and $M_{TC}(G)$ are $G$-matroids. In \cite{BC:ubiquity}, we showed that every $G$-matroid arises 
from some subset $\Psi$ of the set of ends of $G$, in the sense that the circuits of $M$ are precisely the topological circuits of $G$ that use only ends from $\Psi$.
In this case, we denote $M$ by $M_\Psi(G)$. For example, $M_{FC}(G)=M_{\emptyset}(G)$ and $M_{TC}(G)=M_{\Omega(G)}(G)$, where $\Omega(G)$ is the set of all ends of $G$.
 
Not every set $\Psi$ gives rise to a matroid in this way, but if $\Psi$ is pleasant enough, then we do get a matroid \cite{BC:determinacy}.

The way pleasantness is measured here has to do with Determinacy of Sets.
Determinacy of sets is usually defined using games.
Let $\Phi\se A^\Nbb$ for some set $A$. The  $\Phi$-game $\Gcal(\Phi)$ is the following game between two players
which has one move for every natural number.
In each odd move the first player chooses an element of $A$ and in each even move the second player chooses such an element. The first player wins if and only if the sequence they generate between them is in $\Phi$.
The set $\Phi$ is \emph{determined} if one player has a winning strategy.
The question which sets are determined has been investigated in detail by set theorists \cite{determinacy_survey}:
The statement that all subsets $\Phi\se A^\Nbb$ with $A$ countable are determined is called
\emph{the Axiom of Determinacy}, and is sometimes taken as an alternative  to the Axiom of Choice. Indeed, if one assumes the Axiom of Determinacy instead of the Axiom of Choice, every set of real numbers becomes Lebesgue measurable \cite{determinacy_to_measurability}.
A deep result in this area says that if $\Phi$ is Borel (in the product topology), then it is determined \cite{Martin_borel}. 

In \cite{BC:determinacy}, we prove that if $f^{-1}\Psi$ is determined for each $f$ in some fixed collection of continuous maps then there is a matroid $M_{\Psi}(G)$. In particular, if $\Psi$ is Borel then there is such a matroid. Using the determinacy of a different collection of games, in this paper we will prove the following:

\begin{thm}\label{cor2}
Let $G$ be a locally finite graph with a tree-decomposition into finite parts of adhesion at most 2, and let $\Psi_1$ and $\Psi_2$ be Borel sets of ends of $G$. Then the pair $(M_{\Psi_1}(G), M_{\Psi_2}(G))$
satisfies the Packing/Covering Conjecture.
\end{thm}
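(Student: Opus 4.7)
The plan is to reduce Theorem~\ref{cor2} to the paper's general result, which (as the abstract announces) establishes the Packing/Covering Conjecture for pairs of tame matroids admitting a common decomposition by $2$-separations into finite parts. The same reduction will underlie Theorem~\ref{cor1}; the new element for Theorem~\ref{cor2} is handling arbitrary Borel end sets $\Psi_i$, which requires invoking Martin's Borel determinacy theorem on a family of auxiliary games attached to the tree-decomposition.

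First, I would promote the given tree-decomposition $(T,(V_t)_{t\in V(T)})$ of $G$ to a common tree-decomposition of $M_{\Psi_1}(G)$ and $M_{\Psi_2}(G)$. Each edge of $G$ is assigned to some node $t$ with both its endpoints in $V_t$; since $G$ is locally finite and each $V_t$ is finite, every node receives only finitely many edges, yielding finite parts in the matroid decomposition. The adhesion-at-most-$2$ hypothesis, combined with the fact that any $G$-matroid agrees with the finite cycle matroid on any finite edge set, produces a $2$-separation in both matroids for each edge of $T$. Crucially, the two matroids share the \emph{same} decomposition; they differ only in which topological circuits through ends they contain, which is precisely where $\Psi_1$ and $\Psi_2$ enter.

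Second, I would apply the main theorem of the paper, whose hypothesis is the existence of such a common decomposition together with determinacy of a certain family of auxiliary games. These games track, along rays of $T$, the consistency of local choices made in successive adhesion sets of size at most $2$, and test a winning condition that depends on whether the corresponding ray of $T$ represents an end lying in $\Psi_i$. Because $G$ is locally finite and the adhesion is bounded, ends of $G$ correspond continuously to rays in $T$, so each winning condition pulls the Borel set $\Psi_i\subseteq\Omega(G)$ back to a Borel subset of the space of plays. Martin's Borel determinacy theorem then supplies determinacy, and the conclusion of the main theorem gives the desired packing/covering.

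The main obstacle will be setting up the auxiliary games of the main theorem precisely enough to verify the continuous pullback from plays to ends, and checking that the winning conditions---which may combine contributions from several rays and several $2$-adhesions---remain Borel; products, continuous preimages and countable unions all preserve Borelness, so this should go through, but the bookkeeping is delicate. Once this point is settled, the existence of the common decomposition, the finiteness of the parts and the $2$-separation property follow essentially for free from the hypotheses on $G$, and the determinacy claim reduces directly to Martin's theorem.
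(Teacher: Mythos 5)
Your proposal follows the paper's intended route: the paper proves the more general \autoref{mainthm} for trees of matroids of overlap~1 and then observes that \autoref{cor2} is a special case, the conversion of a locally finite graph with a finite-parts adhesion-$\le 2$ tree-decomposition into such a tree of matroids (and the identification of $M_{\Psi_i}(G)$ with the corresponding $M_{\Psi'_i}(T,\overline M)$) being the content of \cite{BC:determinacy}. One small point to be careful about: the hypothesis of \autoref{mainthm} is that $\Psi_M,\Psi_N$ are Borel (determinacy of the Packing and Covering games is then \emph{derived} via Martin's theorem in \autoref{Pgame_det}), so the step you must actually verify in the reduction is that the natural continuous map from $\Omega(T)$ to $\Omega(G)$ pulls $\Psi_1,\Psi_2$ back to Borel sets of ends of $T$, which is where the local finiteness and finiteness of parts are used.
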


We will in fact prove this for the more general family of matroids introduced in \cite{BC:determinacy}, namely those obtained by sticking together finite matroids using 2-sums along a tree structure (see \autoref{mainthm}). Combining this with the basic structural theory of tame matroids developed in \cite{ADP:decomposition} and \cite{BC:ubiquity}, according to which any matroid has a canonical decomposition over its 2-separations into such a tree structure, we have the first beginnings of a structural attack on the Packing/Covering conjecture. First, assuming the axiom of determinacy, we can show that if $M$ is a connected matroid all of whose 3-connected minors are finite then $(M, M)$ satisfies Packing/Covering. Without the axiom of determinacy we do not have this result in general, but we still have it for well-enough behaved matroids. 

Allowing the matroids at the nodes of the tree to be arbitrary finite matroids gives us a lot of freedom: for example, taking all the matroids in the tree to be large and uniform we 
get a natural combinatorial statement about when an infinite system of committees can make a decision. This translation is explained in \cite{C:committees}, where a slightly more general version of this statement is proved.

\section{Overview of the proof}\label{proof_strategy}

We now turn to a sketch of the proof of \autoref{cor2}. For reasons outlined in \cite{BC:PC},  it is enough to prove for pairs $(M, N)$ of matroids as in that theorem that for every edge $e$ of the ground set there is either a set $P$ containing $e$ such that $(M \restric_P, N \restric_P)$  has a packing (we call such a $P$ a wave) or a set $Q$ containing $e$ such that $(M.Q, N.Q)$ has a covering  (we call such a $Q$ a co-wave). 

We therefore look at an example of how such a wave $P$ can interact with a common 2-separation of $M$ and $N$:
Assume $M=M_1\oplus_2 M_2$ and $N=N_1\oplus_2 N_2$ and 
$E(M_1)=E(N_1)$ and $E(M_2)=E(N_2)$.\footnote{Here $\oplus_2$ denotes the 2-sum.}
We assume that $e\in E(M_1)$ and call the gluing edge $f$.

Now suppose that in $(M_1\sm f, N_1/f)$ there is a wave $P_1$ containing $e$ with spanning sets
$S^M$ and $S^N$, and in $(M_2,N_2)$ there is a wave $P_2$ avoiding $f$ with spanning sets
$T^M$ and $T^N$ such that $f$ is in the $N_2$-span of $T^N$.
We can stick together these two waves to give a wave $P=P_1\cup P_2$ in $(M,N)$ with spanning sets $S^M\cup T^M$ and $S^N\cup T^N$. 
We imagine the wave $P_1$ as relying on a promise from $P_2$ that it will $N$-span the edge $f$.
This is one of the 6 ways, classified in \autoref{games}, in which a wave in $(M,N)$ can be built from waves in the two smaller pairs.

Our result relies on the determinacy of certain games. The first is called the \emph{Packing game}, and is played between two players, called Packer and Coverina: we think of Packer as trying to build a wave and Coverina as trying to stop him. At any point in the game, Packer has a partially built wave, together with a collection of promises on which this `partial wave' relies. Coverina is allowed to challenge one of these promises, at which point Packer must show that it can be fulfilled by giving a partial wave fulfilling it, which relies on further promises, which Coverina may in turn challenge, etc.

The game is designed to have the property that there is a wave containing $e$ if and only if Packer has a winning strategy in this game.
Similarly to the Packing game, we define a Covering game, where Coverina is trying to build a suitable co-wave and Packer is trying to prevent her from doing this.
These two games will be determined because $\Psi_1$ and $\Psi_2$ are Borel.
Thus, it suffices to show that we cannot have both a winning strategy for Packer 
in the Covering game and a winning strategy for Coverina
in the Packing game.

We show that if there were such strategies then it would be possible to in some sense play them off against each other, recursively producing infinite plays in each strategy one of which must be losing. Since the strategies were supposed to be winning, this gives our desired contradiction. In the recursive construction we can work locally within particular pairs of finite matroids. However, as often happens, the result about finite matroids which we need to apply is not quite the specialisation of our result to finite matroids. Instead we need a strengthening of the Packing/Covering theorem for finite matroids, explained in \autoref{lemma27} and \autoref{lemma17}. 

Although \autoref{cor2} implies \autoref{cor1}, we are not able to give a simpler proof of 
\autoref{cor1} that does not use these games. In fact, our proof of \autoref{cor1} already relies on the principle of $\mathbf{\Sigma^0_2}$-Determinacy, that is, determinacy of sets $\Phi$ which are countable unions of closed sets. 

The paper is organised as follows. After introducing some necessary background in \autoref{prelims}, we define the Packing and Covering games in \autoref{games}. We then reduce our result to the special case in which $\Psi_M$ and $\Psi_N$ partition the set of ends in \autoref{block}. This special case is proved in \autoref{main}, relying on some technical lemmas from \autoref{cases}.

\section{Preliminaries}\label{prelims}

Throughout, notation and terminology for graphs are those of~\cite{DiestelBook10}, and for matroids those of~\cite{Oxley,matroid_axioms}. We will rely on the following lemma from \cite{DiestelBook10}:

\begin{lem}[K\"onig's Infinity Lemma \cite{DiestelBook10}]\label{Infinity_Lemma}
Let $V_0,V_1,\ldots$ be an infinite sequence of disjoint non-empty finite sets, and let $G$ be a graph on their union. Assume that every vertex in $V_n$ with $n\geq 1$ has a neighbour in $V_{n-1}$. Then $G$ includes a ray $v_0v_1\ldots$ with $v_n\in V_n$ for all $n$.
\end{lem}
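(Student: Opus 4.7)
The plan is to run the standard pigeonhole/compactness argument, picking the vertices of the ray one layer at a time. I would begin by calling a vertex $v \in V_n$ \emph{good} if for every $m \geq n$ there is a path $v = u_n u_{n+1} \cdots u_m$ in $G$ with $u_i \in V_i$ for each $i$. The ray we seek will have the property that every $v_n$ is good, and each good vertex will be shown to have a good successor in the next layer.

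The first step is to show that some $v_0 \in V_0$ is good. Iterating the hypothesis that every vertex in $V_n$ ($n \geq 1$) has a neighbour in $V_{n-1}$, every $u \in V_m$ sits at the top of some path $w_0 w_1 \cdots w_m = u$ with $w_i \in V_i$. Since $\bigcup_n V_n$ is infinite but $V_0$ is finite, by pigeonhole some fixed $v_0 \in V_0$ occurs as $w_0$ for paths of arbitrarily large length $m$, so $v_0$ is good. The key inductive step is then to show that any good $v \in V_n$ has a good neighbour $w \in V_{n+1}$: for each $m \geq n+1$ pick a path witnessing goodness of $v$ up to $V_m$ and look at its second vertex, which lies in $V_{n+1}$ and is adjacent to $v$. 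Because $V_{n+1}$ is finite, some single $w \in V_{n+1}$ arises as this second vertex for arbitrarily large $m$, and from such a $w$ one obtains (by deleting $v$ from each of those paths) witnesses that $w$ is good. Iterating starting from $v_0$ produces a sequence $v_0, v_1, v_2, \ldots$ of good vertices with $v_n v_{n+1} \in E(G)$ and $v_n \in V_n$, which is the required ray.

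I do not expect a real obstacle here; the whole argument is essentially two applications of the finite pigeonhole principle together with dependent choice. The only small subtlety is getting the definition of ``good'' right: one must quantify over paths reaching arbitrarily high layers rather than simply asking for ``infinitely many upward paths'', since the latter could be satisfied by many short paths and would not transfer to a neighbour in the inductive step.
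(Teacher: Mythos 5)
Your proof is correct and is the standard argument for K\"onig's Infinity Lemma; the paper does not reprove this lemma but simply cites it from Diestel's book, so there is no alternative in-text proof to compare against. Your definition of ``good'' (paths reaching arbitrarily high layers, rather than merely infinitely many upward paths) is exactly the right one, and the two pigeonhole steps together with the truncation remark are handled cleanly.
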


For any vertex $t$ of a rooted tree $T$ other than the root, $t^-$ is the unique neighbour of $t$ which is closer to the root.
Whenever, we have a rooted tree $T$, we will consider the edges to be directed towards the root. The terminal vertex of an edge $e$ is denoted by $t(e)$, and the initial vertex by $s(e)$. 
For a set $X$ of edges of such a tree, let $T_V(X)$ denote the set of terminal vertices, and 
$S_V(X)$ the set of starting vertices of edges in $X$.
For a set $F$ of edges, let $V(F)$ be the set of vertices incident with edges in $F$.
For a vertex set $Z$, we denote by $E(Z)$ the set consisting of those edges with both endvertices in $Z$.

By $\pi_1$ and $\pi_2$ we denote the two coordinate-projections for ordered pairs. 

As usual, we denote by $r(M)$ the rank of a finite matroid $M$.

A \emph{strategy for the first player} in a game $\Gcal$ is a set $\sigma$ of finite odd-length plays $P$
such that the following is true for all $P\in \sigma$:
Let $m$ be a move of the second player such that $P m$ is a legal play.
Then there is a unique move $m'$ of the first player such that $P m m'\in \sigma$.
Furthermore, we require that $\sigma$ is closed under \emph{2-truncation}, that is, 
for every $P\in \sigma$ there are some
$P'\in \sigma$ and moves $m$ and $m'$ of the second player and the first player, respectively, such that
$P' m m'=P$.

An infinite play \emph{belongs} to a strategy $\sigma$ for the first player if all its odd length finite initial plays are in $\sigma$.
A strategy for the first player is \emph{winning} if the first player wins in all infinite plays belonging to $\sigma$.
Similarly, one defines \emph{strategies} and \emph{winning strategies} for the second player.

\subsection{Waves and cowaves}

Let $(M,N)$ be a pair of matroids on the same ground set $E$.
 $(X,S^M,S^N)$ is a \emph{wave} for $(M,N)$ if $S^M$ is spanning in $M\restric_X$ and $S^N$ is spanning in $N\restric_X$. We will sometimes refer to the wave simply as $X$, leaving the other sets implicit. 
A \emph{hindrance} is a wave such that there is some $e\in X\sm (S^M\cup S^N)$. In these circumstances, we say that the wave \emph{focuses on $e$}.
We say an edge $e$ is \emph{$M$-spanned} by the wave $X$ if $e$ is in the $M$-span of $X$ but not in $X$ itself. For any wave $X$ for $(M,N)$ and set $C$ of edges, $X\sm C$ is a wave for $(M/C,N/C)$.

A \emph{cowave} for $(M,N)$ is a wave for $(M^*,N^*)$, that is, a triple $(Y,T^M,T^N)$ such that $T^M$ is cospanning in $M.Y$ and $T^N$ is cospanning in $N.Y$.
A cohindrance is a hindrance for $(M^*,N^*)$.
A cowave \emph{$M$-cospans $e$} if it $M^*$-spans $e$ when considered as a wave for $(M^*,N^*)$.

We will need the following lemmas about waves from \cite{BC:PC}:

We define a partial order on waves by $(X, S^M, S^N) \leq (Y, T^M, T^N)$ if and only if $X \subseteq Y$, $S^M \subseteq T^M$ and $S^N \subseteq T^N$. We say a wave is {\em maximal} when it is maximal with respect to this partial order.

\begin{lem}[{\cite[Lemma 4.3 and Corollary 4.5]{BC:PC}}]\label{maxwave}
There is a maximal wave, which covers every edge that is covered by any wave.
\end{lem}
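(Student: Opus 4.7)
The plan is to get the maximal wave from Zorn's lemma, and then use a combining argument to show any other wave is absorbed into it.

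First I would verify that waves are closed under unions of chains: if $\{(X_i, S_i^M, S_i^N) : i \in I\}$ is a chain of waves under the given partial order, I claim $(\bigcup_i X_i, \bigcup_i S_i^M, \bigcup_i S_i^N)$ is again a wave. Indeed, for any $e \in \bigcup_i X_i$ we have $e \in X_i$ for some $i$, so $e \in \Cl_M(S_i^M) \cap X_i \subseteq \Cl_M(\bigcup_i S_i^M) \cap \bigcup_i X_i$, which is exactly the $M$-closure of $\bigcup_i S_i^M$ in the restriction $M\restric_{\bigcup_i X_i}$; symmetrically for $N$. Zorn's lemma then produces a maximal wave $(X, S^M, S^N)$.

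Next, to show that this maximal wave covers every edge covered by any wave, I would use the following combining principle: if $(X, S^M, S^N)$ is a wave for $(M, N)$ and $(X', S'^M, S'^N)$ is a wave for $(M/X, N/X)$ with $X' \cap X = \emptyset$, then $(X \cup X', S^M \cup S'^M, S^N \cup S'^N)$ is a wave for $(M,N)$. To verify the $M$-part: any $e \in X$ is $M$-spanned by $S^M$ and any $e \in X'$ is $M/X$-spanned by $S'^M$, hence $M$-spanned by $S'^M \cup X \subseteq \Cl_M(S^M \cup S'^M)$; in both cases $e \in \Cl_M(S^M \cup S'^M) \cap (X \cup X')$, which equals the closure in the restriction $M \restric_{X \cup X'}$. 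The $N$-side is identical.

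Now let $(Y, T^M, T^N)$ be any wave. Using the fact (noted in the preliminaries) that $Y \sm X$ carries a wave structure $(Y \sm X, T^M \sm X, T^N \sm X)$ for $(M/X, N/X)$, I apply the combining principle to produce a wave for $(M, N)$ whose ground set is $X \cup (Y \sm X) = X \cup Y$ and which is $\geq (X, S^M, S^N)$ in the partial order on waves. By maximality of $X$, we must have $X \cup Y = X$, i.e., $Y \subseteq X$. In particular every edge covered by $(Y, T^M, T^N)$ is covered by $(X, S^M, S^N)$.

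The only slightly delicate point, and the one worth double-checking carefully, is the interaction of closure with contraction and restriction in the combining step, i.e.\ the identity $(M \restric_{X \cup X'})/X = (M/X)\restric_{X'}$ used implicitly to transfer spanning in $M/X$ across to spanning in the restriction $M\restric_{X \cup X'}$; once this is granted, everything else is routine.
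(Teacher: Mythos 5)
The paper does not actually prove this lemma: it is cited verbatim from [BC:PC] (Lemma 4.3 and Corollary 4.5), so there is no in-paper argument to compare against. Your reconstruction is correct and follows the natural route. Two points worth flagging. First, the combining principle you re-derive inline is exactly \autoref{contract_waves}, which the paper already cites as Lemma 4.7 of [BC:PC]; you could simply invoke it rather than re-prove it, and similarly the observation that $(Y\setminus X, T^M\setminus X, T^N\setminus X)$ is a wave for $(M/X,N/X)$ is stated in the preliminaries, so the only genuinely new content in your argument is the Zorn step. Second, the definition of wave implicitly requires $S^M$ and $S^N$ to be disjoint (a wave must yield a packing of $M\restric_X$ and $N\restric_X$), and you should check this is preserved both when taking unions along a chain and when forming $(X\cup X', S^M\cup S'^M, S^N\cup S'^N)$; in both cases it is routine (for the chain, any $x$ in both unions would already lie in $S_j^M\cap S_j^N$ for the larger index $j$; for the combining step, $S^M,S^N\subseteq X$ while $S'^M,S'^N\subseteq X'$ and $X\cap X'=\emptyset$), but it is not automatic from what you wrote. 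The "delicate point" you single out, the identity $(M\restric_{X\cup X'})/X = (M/X)\restric_{X'}$ together with the compatibility of $\Cl$ with restriction and contraction, is indeed the only place where one needs care in the infinite setting; it does hold for matroids in the sense of Bruhn et al., and [BC:PC] Lemma 4.7 packages precisely this.
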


\begin{lem}[{\cite[Lemma 4.4]{BC:PC}}]\label{joinwaves}
Let $(X, S^M, S^N)$ and $(Y, T^M, T^N)$ be waves for $(M, N)$.
Then $(X\cup Y, S^M \cup (T^M \sm X), S^N \cup (T^N \sm X))$ is a wave, which we denote $X \circ Y$.
\end{lem}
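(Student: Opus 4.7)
The plan is to verify the spanning conditions for the proposed triple directly from the wave conditions on the two given triples, exploiting the fact that the $M$-closure of $S^M$ already covers all of $X$ and so can absorb the overlap $T^M \cap X$. Since $S^M \subseteq X$ and $T^M \sm X \subseteq Y$, the candidate set $S^M \cup (T^M \sm X)$ automatically lies in $X \cup Y$, so the only nontrivial thing to check is that it is spanning in $M \restric_{X \cup Y}$, i.e.\ that $X \cup Y \subseteq \Cl_M(S^M \cup (T^M \sm X))$.

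For the $X$ portion, the wave hypothesis on $(X, S^M, S^N)$ gives $X \subseteq \Cl_M(S^M)$, which sits inside $\Cl_M(S^M \cup (T^M \sm X))$ by monotonicity of the closure. For the $Y$ portion, the key step is to decompose $T^M = (T^M \cap X) \cup (T^M \sm X)$: the first piece lies in $X$ and hence in $\Cl_M(S^M)$ by the previous step, while the second piece is literally contained in $S^M \cup (T^M \sm X)$. Hence $T^M \subseteq \Cl_M(S^M \cup (T^M \sm X))$, and since $Y \subseteq \Cl_M(T^M)$ by the wave hypothesis on $(Y, T^M, T^N)$, idempotence of $\Cl_M$ yields $Y \subseteq \Cl_M(S^M \cup (T^M \sm X))$ as desired. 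The $N$-spanning condition is obtained by the symmetric argument with $M, S^M, T^M$ replaced by $N, S^N, T^N$. If the working definition of wave also demands that the two spanning sets be disjoint, this is automatic here: $S^M \cap S^N = \emptyset$ and $(T^M \sm X) \cap (T^N \sm X) \subseteq T^M \cap T^N = \emptyset$ by hypothesis, and the cross intersections vanish because $S^M, S^N \subseteq X$ while $T^M \sm X$ and $T^N \sm X$ are by construction disjoint from $X$.

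I do not anticipate any real obstacle; the lemma is essentially the bookkeeping step behind the natural idea that two waves can be combined by taking the union of their ground sets and retaining one packing on the overlap. The only substantive point is the ``absorption'' identity $T^M \subseteq \Cl_M(S^M \cup (T^M \sm X))$, which is precisely where the wave hypothesis on $(X, S^M, S^N)$ enters the argument.
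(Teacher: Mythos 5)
The paper does not give its own proof of this lemma; it is quoted as Lemma 4.4 of \cite{BC:PC}. Your direct verification is correct and is the standard argument: the only substantive step is the absorption identity $T^M \subseteq \Cl_M(S^M \cup (T^M \sm X))$, obtained by splitting $T^M$ along $X$ and using that $(X,S^M,S^N)$ spans $X$, after which idempotence and monotonicity of $\Cl_M$ give $Y \subseteq \Cl_M(S^M \cup (T^M \sm X))$; your remark on disjointness of the two new sides is also correct.
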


If $X$ is a hindrance focusing on $e$ then so is $X \circ Y$. If $e$ is $M$-spanned by $X$ and not contained in $Y$ then $e$ is $M$-spanned by $X \circ Y$.

\begin{lem}[{\cite[Lemma 4.7]{BC:PC}}]\label{contract_waves}
Let $(X, S^M,S^N)$ be a wave for a pair $(M,N)$ of matroids on the same ground set. 
Let $(Y, T^M,T^N)$ be a wave for the pair $(M/X,N/X)$. 
Then $(X \cup Y, S^M\cup T^M, S^N\cup T^N)$ is a wave for the family $(M,N)$.
\end{lem}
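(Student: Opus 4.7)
The statement is essentially a routine closure-operator computation, so my plan is to unwind the definitions and invoke standard properties of $\Cl_M$, in particular the identity $\Cl_{M/X}(A)=\Cl_M(A\cup X)\sm X$ for $A\se E\sm X$.

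First I would verify the trivial containments: since $S^M\se X$ and $T^M\se Y$, we have $S^M\cup T^M\se X\cup Y$, and similarly for $N$; and $X$ and $Y$ are disjoint since $Y$ lives in the ground set of $M/X$. The real content is to show that $S^M\cup T^M$ is spanning in $M\restric_{X\cup Y}$, i.e.\ that $X\cup Y\se \Cl_M(S^M\cup T^M)$, and symmetrically for $N$.

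For the $M$-part, I would handle the two pieces separately. Every $x\in X$ is $M$-spanned by $S^M$ by hypothesis on the first wave, hence lies in $\Cl_M(S^M)\se \Cl_M(S^M\cup T^M)$. For $y\in Y$, the fact that $T^M$ spans $Y$ in $M/X$ gives $y\in \Cl_{M/X}(T^M)$, which by the contraction-closure identity means $y\in \Cl_M(T^M\cup X)$. But $X\se \Cl_M(S^M)$, so by the monotonicity and idempotence of the closure operator
\[
\Cl_M(T^M\cup X)\se \Cl_M(T^M\cup \Cl_M(S^M))=\Cl_M(T^M\cup S^M),
\]
and therefore $y\in\Cl_M(S^M\cup T^M)$. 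This gives $X\cup Y\se \Cl_M(S^M\cup T^M)$, so $S^M\cup T^M$ is spanning in $M\restric_{X\cup Y}$.

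The argument for $N$ is the same with $M$ replaced by $N$. There is no real obstacle here; the only place one has to be a little careful is recognizing that ``spanning in $M\restric_X$'' is equivalent to ``$X\se\Cl_M(S^M)$'' (because $\Cl_{M\restric_X}(A)=\Cl_M(A)\cap X$ for $A\se X$), so one can work uniformly inside $M$ rather than switching between the minors $M\restric_X$, $M/X$, and $M\restric_{X\cup Y}$.
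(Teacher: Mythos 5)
Your proof is correct. Note, however, that the paper does not actually prove this lemma: it is stated as a citation of Lemma~4.7 of \cite{BC:PC}, so there is no in-paper argument to compare against. Your argument is the standard one: reduce the three spanning conditions to inclusions of the form $Z\se\Cl_M(S)$, apply the identity $\Cl_{M/X}(A)=\Cl_M(A\cup X)\sm X$, and then use monotonicity and idempotence of closure in the form $\Cl_M(T^M\cup\Cl_M(S^M))=\Cl_M(T^M\cup S^M)$. These closure properties hold for infinite matroids in the Bruhn--Diestel--Kriesell--Pendavingh--Wollan axiomatisation, so the computation goes through unchanged. The only minor remark is that the containments $S^M\se X$, $T^M\se Y$ (and hence $S^M\cup T^M\se X\cup Y$), which you flag as ``trivial,'' are implicit in the definition of a wave rather than stated explicitly, so it is reasonable to spell them out as you do.
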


\begin{cor}\label{PCfromwave}
Let $(M, N)$ be a pair of matroids on the same ground set $E$. If for any set $X$ and any edge $e \in E \sm X$ there is either a wave in $(M/X, N/X)$ containing $e$ or a cohindrance in $(M/X, N/X)$ focusing on $e$ then $(M, N)$ satisfies the Packing/Covering Conjecture.
\end{cor}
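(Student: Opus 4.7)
My plan is to take $P$ to be the support of a maximal wave of $(M,N)$ and $Q:=E\sm P$, and to verify directly that these two pieces supply the packing and the covering required by Packing/Covering.

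I would first invoke \autoref{maxwave} to fix a maximal wave $W=(P,S^M,S^N)$ of $(M,N)$ and then observe that $W$ cannot be a hindrance: if some $e\in P\sm (S^M\cup S^N)$ existed, the triple $(P,S^M\cup\{e\},S^N)$ would still satisfy the wave axioms (spanning is upward closed in the third coordinate, and $e\notin S^N$ preserves disjointness of the two spanning sets) and would strictly dominate $W$ in the partial order defined just before \autoref{maxwave}, contradicting maximality. Hence $S^M$ and $S^N$ are disjoint spanning sets with $S^M\cup S^N=P$, giving a packing of $(M\restric_P,N\restric_P)$.

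For the covering of $(M.Q,N.Q) = (M/P,N/P)$ I would run the dual argument. By \autoref{contract_waves} combined with the maximality of $W$, no wave in $(M/P,N/P)$ can contain any edge---any such wave, stacked on top of $W$, would produce a strictly larger wave of $(M,N)$. The hypothesis of the corollary applied with $X=P$ and each $e\in Q$ therefore rules out the first alternative, so every $e\in Q$ is the focus of some cohindrance in $(M/P,N/P)$. Applying the dual of \autoref{maxwave} to $(M/P,N/P)$, take a maximal cowave $C=(Q^{*},T^M,T^N)$; since every $e\in Q$ already lies in the support of some cowave, maximality forces $Q^{*}=Q$. Dualising the single-element extension step once more shows that $C$ is not a cohindrance, so $Q\sm T^M$ and $Q\sm T^N$ are the desired independent sets covering $Q$ in $M.Q$ and $N.Q$ respectively.

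Together these two parts exhibit the partition $E=P\,\dot\cup\,Q$ demanded by the Packing/Covering Conjecture. I do not anticipate any serious obstacle: the argument is essentially a formal consequence of the three lemmas recalled above together with the corollary's hypothesis. The only point I would verify with care is the single-element extension step---that adding one uncovered edge of $P$ to $S^M$ keeps $(P,S^M\cup\{e\},S^N)$ a wave, and dually for cowaves---but this is immediate from the upward-closure of ``spanning in $M\restric_P$'' (respectively ``cospanning in $M.Q$'') under taking supersets of the distinguished set.
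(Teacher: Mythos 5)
Your proof is correct and follows essentially the same route as the paper: fix a maximal wave of support $P$, use \autoref{contract_waves} to rule out nontrivial waves in $(M/P,N/P)$, deduce from the hypothesis that every $e\in Q=E\sm P$ is the focus of a cohindrance, and invoke the dual of \autoref{maxwave} to get a cowave with underlying set $Q$. You additionally spell out the verifications that the maximal wave and the maximal cowave really do supply the required packing of $(M\restric_P,N\restric_P)$ and covering of $(M.Q,N.Q)$, details that the paper compresses into the closing phrase ``witnesses that $(M,N)$ satisfies the Packing/Covering Conjecture.''
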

\begin{proof}
 Let $X$ be a maximal wave. Then by \autoref{contract_waves} there is no nontrivial wave in $(M/X, N/X)$, so by assumption every edge in $E \sm X$ is at the focus of some cohindrance. So by the dual of \autoref{maxwave} there is a cowave for this pair whose underlying set is $E \setminus X$. This cowave, together with $X$, witnesses that $(M, N)$ satisfies the Packing/Covering Conjecture.
\end{proof}

Next we recall the concept of exchange chains as introduced in \cite{union1}.
For sets $I_M\in\Ical(M)$ and $I_N\in\Ical(N)$, and elements $x\in I_M\cup I_N$ and  $y\in E$, a tuple $Y=(y=y_0, \ldots, y_n=x)$ with $y_i \neq y_{i+1}$ for all $i$ is called an \emph{even $(I_M, I_N)$-exchange chain} (or \emph{even $(I_M, I_N)$-chain}) 
from $y$ to $x$
of \emph{length} $n$ if the following terms are satisfied. 

\begin{enumerate}[(X1)]
	\item For an even $i$, there exists a circuit $C_i$ of $M$ with $\{y_i, y_{i+1}\} \se C_i \subseteq I_M+y_i$.
	\item For an odd $i$, there exists a circuit $C_i$ of $N$ with $\{y_i, y_{i+1}\} \se C_i \subseteq I_N+y_i$.
\end{enumerate}

\noindent
If $n\geq 1$, then (X1) and (X2) imply that $y_0\notin I_M$ and that, starting with $y_1\in I_M\setminus I_N$, the elements $y_i$ alternate between $I_M\setminus I_N$ and $I_N\setminus I_M$; the single exception being $y_n$ which can lie in $I_M\cap I_N$.

By an \emph{odd exchange chain} (or \emph{odd chain}) we mean an even chain with the words `even' and `odd' interchanged in the definition.
Consequently, we say \emph{exchange chain} (or \emph{chain}) to refer to either of these notions.

\begin{lem}[{\cite[weakening of Lemma 2.5]{BC:PC}}]\label{runchains}
Let $(M,N)$ be a pair of matroids on the same ground set and let $B_M\in \Ical(M)$ and $B_N\in \Ical(N)$.
If there is a $(B_M,B_N)$-exchange chain from $z$ to $f$,
then there are sets $B_M'\in \Ical(M)$ and $B_N'\in \Ical(N)$ 
such that $B_M'\cup B_N'=B_M\cup B_N+z-f$.

Moreover, $\Cl_{M}B_M=\Cl_{M}B_M'$ and $\Cl_{N}B_N=\Cl_{N}B_N'$.
\end{lem}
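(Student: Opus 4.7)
The plan is to proceed by induction on the length $n$ of the chain, supposing without loss of generality that the chain is even (the odd case is symmetric) and that it has been chosen to be of minimum length among $(B_M,B_N)$-exchange chains from $z$ to $f$. The overall strategy is to peel off the first circuit exchange, apply the induction hypothesis to the suffix, and glue the results together.

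For the base case $n=1$ the chain is just $(z,f)$ together with a single $M$-circuit $C_0$ satisfying $\{z,f\} \subseteq C_0 \subseteq B_M + z$. Since $B_M \in \Ical(M)$ we must have $z \notin B_M$, so $C_0$ is the unique $M$-circuit inside $B_M + z$; removing $f$ therefore yields $B_M' := B_M + z - f \in \Ical(M)$. Setting $B_N' := B_N$ (or $B_N - f$ in the exceptional subcase $f \in B_N$) gives the union identity immediately. For the closure statement, the inclusions $z \in \Cl_M(C_0 - z) \subseteq \Cl_M(B_M)$ and $f \in \Cl_M(C_0 - f) \subseteq \Cl_M(B_M')$ combine to give $\Cl_M(B_M) = \Cl_M(B_M + z) = \Cl_M(B_M' + f) = \Cl_M(B_M')$, and nothing needs to be checked on the $N$ side.

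For the inductive step $n \geq 2$ I would use $C_0$ to replace $B_M$ by $B_M^{(1)} := B_M + y_0 - y_1$, and then apply the induction hypothesis to the suffix $(y_1, y_2, \ldots, y_n)$, which is an odd chain from $y_1$ to $f$ of length $n-1$. The one thing that needs checking is that this suffix is a valid $(B_M^{(1)}, B_N)$-exchange chain, i.e., that each later $M$-circuit $C_i$ still lies inside $B_M^{(1)} + y_i$. An occurrence $y_0 \in C_i$ would force $y_0 \in B_M + y_i$, violating either $y_0 \notin B_M$ or $y_0 \neq y_i$; and $y_1 \in C_i$ for some even $i \geq 2$ would permit strong circuit elimination between $C_0$ and $C_i$ to produce a circuit witnessing a strictly shorter $(B_M,B_N)$-exchange chain from $z$ to $f$, contradicting minimality. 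Hence the suffix is a valid odd chain for $(B_M^{(1)}, B_N)$, induction delivers $B_M', B_N'$ with the required union and closure identities relative to $(B_M^{(1)}, B_N)$, and the base case identity $\Cl_M(B_M^{(1)}) = \Cl_M(B_M)$ finishes the closure statement.

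The main obstacle I anticipate is precisely this circuit-validity check in the inductive step: we are working with tame infinite matroids, so strong circuit elimination has to be applied with care to potentially infinite circuits, and the minimum-length hypothesis must be pushed to rule out every way a later circuit could swallow an earlier chain element. A cleaner alternative which sidesteps this bookkeeping is to specify $B_M'$ and $B_N'$ a priori from the chain data—moving the even-indexed $y_i$ into $B_M$ and the odd-indexed ones into $B_N$ (with the obvious removals)—and then verify independence and closure in one blow via a matroid-union style argument; this is more uniform, but ultimately leans on the same strong circuit elimination content.
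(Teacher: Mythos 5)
The paper does not prove \autoref{runchains}: it is imported, labelled ``weakening of Lemma 2.5'' from \cite{BC:PC}. So there is no in-paper argument to compare against, and the proposal has to be judged on its own merits. Judged that way, it has a genuine gap in the inductive step. You claim that, for a minimum-length chain, an occurrence $y_1\in C_i$ for some even $i\ge 2$ can be refuted because strong circuit elimination between $C_0$ and $C_i$ ``produces a circuit witnessing a strictly shorter $(B_M,B_N)$-exchange chain from $z$ to $f$''. That is false. Eliminating $y_1$ and retaining $y_0$ gives a circuit $C'$ with $y_0\in C'\subseteq (C_0\cup C_i)-y_1\subseteq B_M+y_0+y_i-y_1$; since $C'\ne C_0$ (it avoids $y_1$), uniqueness of fundamental circuits forces $y_i\in C'$, so $C'$ is \emph{not} contained in $B_M+y_0$ and cannot serve as a step in any $(B_M,B_N)$-chain.

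Here is a concrete counterexample to the claim itself. Take $E=\{y_0,y_1,y_2,y_3,a\}$; let $M$ have rank $3$ with $\{y_0,y_1\}$ a parallel pair, $\{y_1,y_2,y_3\}$ a triangle and $a$ a coloop; let $N$ have rank $1$ with $y_1$ parallel to $y_2$ and $y_0,y_3,a$ loops. Set $B_M=\{y_1,y_3,a\}$, $B_N=\{y_2\}$, $z=y_0$, $f=y_3$. The even chain $(y_0,y_1,y_2,y_3)$ with $C_0=\{y_0,y_1\}$, $C_1=\{y_1,y_2\}$, $C_2=\{y_1,y_2,y_3\}$ is the unique shortest $(B_M,B_N)$-chain from $z$ to $f$ (there is no chain of length $1$ or $2$, of either parity), and yet $y_1\in C_2$. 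So minimality does \emph{not} rule out $y_1\in C_i$; as a consequence, the suffix $(y_1,\dots,y_n)$ need not be a valid $(B_M+y_0-y_1,B_N)$-chain \emph{with the same circuits}, and your peel-off step does not go through as written.

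The programme is salvageable, but the minimality hypothesis must be used in a different place. Do not try to reuse the circuits $C_i$: instead, when $y_1\in C_i$, use strong circuit elimination pivoting at $y_1$ and \emph{retaining $y_{i+1}$} to produce a replacement circuit $C_i'$ with $y_{i+1}\in C_i'\subseteq (C_0\cup C_i)-y_1\subseteq B_M^{(1)}+y_i$ (and automatically $y_i\in C_i'$, by independence of $B_M^{(1)}$). The obstruction to doing this is $y_{i+1}\in C_0$; and \emph{that} is what minimality rules out, because $y_{i+1}\in C_0$ yields the strictly shorter even chain $(y_0,y_{i+1},y_{i+2},\dots,y_n)$ witnessed by $C_0,C_{i+1},\dots,C_{n-1}$. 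In the example above this is exactly what happens: the fundamental circuit of $y_2$ in $B_M^{(1)}=\{y_0,y_3,a\}$ is $\{y_0,y_2,y_3\}$, not $\{y_1,y_2,y_3\}$. Finally, a smaller issue in your base case: in the subcase $f\in B_N$ you propose $B_N'=B_N-f$, but then $f\notin\Cl_N(B_N')$ while $f\in\Cl_N(B_N)$, so the closure invariant is destroyed. (Indeed the lemma as stated is problematic when $f\in B_M\cap B_N$; in the paper's only use of it, inside the proof of \autoref{chain3}, one has $f\in B_M\setminus B_N$, so this edge case does not arise.)
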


\begin{lem}\label{chain3}
Let $(M,N)$ be a pair of matroids on a common ground set $E$, and let $f\in E$.
If there is a hindrance $(X,S^M,S^N)$ in $(M/f,N\sm f)$, then in $(M,N)$ either $X$ is a wave or there is a hindrance $X'\se X$.
\end{lem}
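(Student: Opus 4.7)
The plan is to split on whether the spanning set $S^M$ from the hypothesis already spans $X$ in the uncontracted matroid $M$, not merely in $M/f$. Since $f\notin X$, the matroids $N\sm f$ and $N$ agree on $X$, so $S^N$ automatically spans $X$ in $N$; the only question is about $S^M$. If $S^M$ spans $X$ in $M$, i.e.\ $X\subseteq \Cl_M(S^M)$, then the triple $(X,S^M,S^N)$ is a wave in $(M,N)$ and we have the first alternative.

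Otherwise, pick $x_0\in X\setminus \Cl_M(S^M)$. Since $S^M$ spans $X$ in $M/f$, we have $x_0\in \Cl_M(S^M+f)\setminus \Cl_M(S^M)$, and the exchange axiom gives $f\in \Cl_M(S^M+x_0)$. Hence $\Cl_M(S^M+x_0)\supseteq \Cl_M(S^M+f)\supseteq X$, so $(X,S^M+x_0,S^N)$ is a wave in $(M,N)$. If $x_0$ can be chosen different from $e$ then $e\in X\setminus((S^M+x_0)\cup S^N)$ survives as a focus, yielding a hindrance with $X'=X$.

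The remaining case is $X\setminus \Cl_M(S^M)=\{e\}$. Then $S^M$ spans $X\setminus\{e\}$ in $M$, so $(X\setminus\{e\},S^M,S^N)$ is a wave in $(M,N)$, which is a hindrance whenever $X\setminus(S^M\cup S^N)$ contains something besides $e$. The genuinely delicate edge case is $X=S^M\cup S^N\cup\{e\}$ with $e$ disjoint from $S^M\cup S^N$: here I would invoke \autoref{runchains}. The fundamental $M$-circuit of $f$ inside $S^M+e+f$ (which exists because $f\in \Cl_M(S^M+e)\setminus \Cl_M(S^M)$) provides an even exchange chain $(e,f)$ of length one; paired with the fundamental $N$-circuit $D$ of $e$ in $S^N+e$, running the exchange produces $B_M'\in \Ical(M)$ and $B_N'\in \Ical(N)$ with $B_M'\cup B_N'=(S^M+f)\cup S^N+e-f=X$ and with $\Cl_M B_M'=\Cl_M(S^M+f)\supseteq X$, $\Cl_N B_N'=\Cl_N(S^N)\supseteq X$. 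The triple $(X,B_M',B_N')$ is therefore a wave in $(M,N)$, giving the first alternative; and when a judicious $y\in D\setminus\{e\}$ can be arranged to fall outside the new $M$-spanning set (using further exchange in $M$ via a fundamental $M$-circuit of an element of $S^N\setminus S^M$ when needed), the resulting triple is moreover a hindrance on $X$ focused on $y$.

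The main obstacle is this final edge case, in which a single substitution no longer isolates a focus and one is forced to track two fundamental circuits simultaneously. It is handled by \autoref{runchains}, which preserves the closures in both $M$ and $N$ separately and so guarantees that whatever triple emerges is at least a wave in $(M,N)$ with support $X$, while also offering the flexibility needed — via the choice of exchanged element — to upgrade to a hindrance whenever this is possible.
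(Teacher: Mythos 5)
Your proposal is correct, and it departs from the paper's argument. The paper works with exchange chains from the outset: it sets $X'\subseteq X$ to be the set of elements reachable from the focus $z$ by an $(S^M+f,S^N)$-chain, obtains a hindrance on $X'$ when $f\notin X'$, and otherwise runs the chain to $f$ via \autoref{runchains} to produce a wave on $X$. You instead argue directly through the closure operator. The decisive step is your ``Otherwise'' paragraph: for \emph{any} $x_0\in X\setminus\Cl_M(S^M)$ the exchange property of closure gives $f\in\Cl_M(S^M+x_0)$, whence $X\subseteq\Cl_M(S^M+f)\subseteq\Cl_M(S^M+x_0)$; since $S^N$ already $N$-spans $X$ (because $f\notin X$), the triple $(X,S^M+x_0,S^N)$ is a wave for $(M,N)$. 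Together with the trivial case $X\subseteq\Cl_M(S^M)$, this already proves the lemma --- indeed a strengthening of it, since it shows the first alternative (``$X$ is a wave'') always holds. That is a genuinely simpler route than the chain-reachability argument.

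The issue is that you do not seem to notice this, and everything from ``The remaining case is $X\setminus\Cl_M(S^M)=\{e\}$'' onwards is superfluous: even when $x_0$ is forced to equal the focus $e$, the argument you just gave already yields the wave $(X,S^M+e,S^N)$, and nothing more is required (the lemma is a disjunction, not a conjunction). The extra material is also shakier than the rest. The appeal to \autoref{runchains} with $B_M=S^M+f$, $B_N=S^N$ needs $B_M\in\Ical(M)$ and $B_N\in\Ical(N)$, which is only guaranteed after one reduces, as the paper does explicitly, to the case where $S^M$ and $S^N$ are bases of $(M/f)\restric_X$ and $(N\sm f)\restric_X$ and $f$ is not an $M$-loop; you never make this reduction. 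The fundamental $N$-circuit $D$ plays no role in a length-one chain, and the concluding ``judicious $y$'' sketch is too vague to check. None of this damages the proof, because it is all unnecessary --- but you should cut it, or at least recognize that the first half stands alone.
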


\begin{proof}
We may assume that $f$ is not a loop in $M$, and that $S^M$ and $S^N$ are bases of $(M/f)\restric_X$ and $(N\sm f)\restric_X$, respectively. 
Thus $S^M+f$ is $M$-independent. Let $z$ be in the focus of the hindrance. 
Let $X'\se X$ be the set of edges $y$ for which there is some $(S^M+f, S^N)$-chain
from $z$ to $y$.  
First we consider the case that $f\notin X'$.
Then $(X', S^M\cap X', S^N\cap X')$ is a hindrance focusing on $z$, which is the second outcome of the lemma.

Thus we may assume that there is an $(S^M+f, S^N)$-chain from $z$ to $f$.
Applying \autoref{runchains}, we get sets $J_M\in \Ical(M)$ and $J_N\in \Ical(N)$
such that $S^M\cup S^N+z=J_M\cup J_N$. Moreover, $J_M$ and $J_N$ span $X$ in $M$ and $N$, respectively.
Hence we get the first outcome: $(X, J_M,J_N)$ is a wave.
\end{proof}


\begin{lem}\label{chain2}
Let $(M,N)$ be a pair of matroids on the common ground set $E$, and let $e\in E$.
If there is a hindrance $(X,S^M,S^N)$ for $(M,N)$, 
then in $(M,N)$ either there is a hindrance focusing on $e$ or there is a hindrance that does not contain $e$.
\end{lem}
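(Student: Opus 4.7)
The plan is to mimic the proof of \autoref{chain3}, combining \autoref{runchains} with a reachability-by-exchange-chains construction. First I reduce to the case $e\in X$, since otherwise the given hindrance already avoids $e$. I then replace $S^M$ and $S^N$ by bases of $M\restric_X$ and $N\restric_X$ contained in them; this only enlarges the focus, so some $z\in X\sm(S^M\cup S^N)$ remains. If after this reduction $e\notin S^M\cup S^N$, the new triple is itself a hindrance focusing on $e$, so I may assume in addition that $S^M\in\Ical(M)$, $S^N\in\Ical(N)$, and $e\in S^M\cup S^N$.

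I next split on the existence of an $(S^M,S^N)$-exchange chain from $z$ to $e$. If such a chain exists, \autoref{runchains} supplies independent sets $S^{M'}\in\Ical(M)$ and $S^{N'}\in\Ical(N)$ with $S^{M'}\cup S^{N'}=S^M\cup S^N+z-e$ and with the same $M$- and $N$-closures as $S^M$ and $S^N$. Both new sets lie inside $X$, the equality of closures makes $(X,S^{M'},S^{N'})$ a wave, and now $e\in X\sm(S^{M'}\cup S^{N'})$, so we have a hindrance focusing on $e$.

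If no such chain exists, let $X'$ consist of $z$ together with every edge $y$ reachable from $z$ by some $(S^M,S^N)$-exchange chain. By hypothesis $e\notin X'$, so it suffices to check that $(X',S^M\cap X',S^N\cap X')$ is a wave, as this is then a hindrance focusing on $z$ that avoids $e$. The real content is showing, for every $y\in X'$ with $y\notin S^M$, that $y\in\Cl_M(S^M\cap X')$, and symmetrically for $N$. Taking the fundamental $M$-circuit $C$ of $y$ with respect to $S^M$, it is enough to show $C\sm y\se X'$: for $y=z$ the length-one chain $(z,w)$ built from $C$ directly witnesses $w\in X'$ for every $w\in C\sm y$, and for $y\ne z$ any chain from $z$ to $y$ can be extended by an $M$-step through any $w\in C\sm y$ using $C$ as the next circuit, again placing $w$ in $X'$.

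The main obstacle is the parity bookkeeping in this extension step: one must verify that whenever $y\in X'\sm(S^M\cup\{z\})$, any chain witnessing $y\in X'$ ends in a position from which an $M$-step is legal. This follows from the alternation rule stated after the definition of exchange chain: the conditions $y\notin S^M$ and $y\in S^M\cup S^N$ force $y\in S^N\sm S^M$, which in either the even- or odd-chain case pins down the parity of $n$ so that the previous circuit $C_{n-1}$ is in $N$ and the next circuit $C_n$ may be taken in $M$. The analogous bookkeeping handles the $N$-case.
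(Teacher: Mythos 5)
Your proof is correct, and it takes a different route from the paper's. The paper first reduces to the case $e\in S^M$ (WLOG), then observes that $X-e$ is a hindrance for the auxiliary pair $(M/e,N\sm e)$, invokes \autoref{chain3} as a black box to conclude that in $(M,N)$ either $X-e$ is a wave or some $X'\se X-e$ is a hindrance, and finishes by checking that in the wave case $e$ is $M$- and $N$-spanned by $X-e$ (the $N$-side coming for free from $e\in S^M\se\Cl_N S^N$, the $M$-side split into two subcases). Your proof instead replays the exchange-chain machinery directly on $(M,N)$ with the original $S^M,S^N$ reduced to bases: a case split on whether a $(S^M,S^N)$-chain from the focus $z$ to $e$ exists, \autoref{runchains} handling the positive case, and the reachability set $X'$ handling the negative case. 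This is essentially the proof technique of \autoref{chain3} transplanted inline, with $(S^M,S^N)$-chains in place of $(S^M+f,S^N)$-chains; you thereby avoid the contraction/deletion detour through $(M/e,N\sm e)$ at the cost of re-verifying that $X'$ is a wave (including the parity bookkeeping, which you correctly derive from the alternation rule and the fact that non-initial chain vertices lie in $S^M\cup S^N$). The paper's version is shorter and factors the work through an already-proved lemma; yours is more self-contained and makes the mechanism visible without the auxiliary-pair trick.
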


\begin{proof}
If $e$ is not in $S^M\cup S^N$, then we are done.
So we assume without loss of generality that $e\in S^M$.
Then $X-e$ is a hindrance for $(M/e,N\sm e)$.
By \autoref{chain3}, in $(M,N)$ either $X-e$ is a wave or there is a hindrance $X'\se X-e$.
As we are done in the later case, it suffices to show that $e$ is $M$-spanned and $N$-spanned by $X-e$. As $e\in S^M$, it is $N$-spanned by $S^N\se X-e$. If $e$ is not $M$-spanned by $X-e$, then
$(X-e,S^M-e,S^N)$ is a hindrance avoiding $e$, in which case we are also done.
\end{proof}

\begin{lem}\label{chain4}
Let $(M,N)$ be a pair of finite matroids on a common ground set $E$, and let $e\in E$. 
Then either there is a cohindrance focusing on $e$ or $e$ is contained in a wave.
\end{lem}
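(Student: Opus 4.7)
I will prove the contrapositive: if no wave in $(M,N)$ contains $e$, then some cohindrance focuses on $e$. Use \autoref{maxwave} to pick a maximal wave $(X, S^M, S^N)$; by the ``covers every covered edge'' clause we may assume $e\in Y:=E\sm X$. By \autoref{contract_waves} together with the maximality of $X$, the contracted pair $(M/X, N/X)$ on $Y$ admits no nontrivial wave; in particular no element of $Y$ is a loop in $M/X$ or $N/X$, since such a loop would give a one-element wave in $(M/X, N/X)$. Applying the finite Packing/Covering theorem (known, since finite matroids are finitary) to $(M/X, N/X)$ produces a partition $Y=P\cup Q$ witnessing a packing on $P$ and a covering on $Q$; a packing on a nonempty $P$ would itself be a nontrivial wave, so $P=\emptyset$, and hence $(M/X, N/X)$ has a covering $I^M\cup I^N=Y$.

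The crux is to arrange that $e\in I^M\cap I^N$, for then $(Y,\, Y\sm I^M,\, Y\sm I^N)$ is a cohindrance in $(M/X, N/X)$ focusing on $e$, and since $(M/X).Y = M.Y$ (and similarly for $N$) this is equally a cohindrance in $(M,N)$. Using that $e$ is a non-loop in both $M/X$ and $N/X$, Edmonds' matroid partition theorem applied to $((M/X)/e, (N/X)/e)$ shows that a covering with $e$ in both parts exists if and only if
\[
r_{M/X}(T)+r_{N/X}(T)\ge |T|+1\qquad\text{for every }T\se Y\text{ with }e\in T.
\]
The loose version $\ge |T|$ is immediate from the existence of a covering on $Y$, and the task is to upgrade to strictness. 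Suppose some $T\ni e$ gave equality $r_{M/X}(T)+r_{N/X}(T)=|T|$. Intersecting the covering with $T$ yields subsets whose sizes sum to at least $|T|$ (they cover $T$) and at most $r_{M/X}(T)+r_{N/X}(T)=|T|$; both bounds must therefore be tight, so $I^M\cap T$ and $I^N\cap T$ are disjoint bases of $(M/X)\restric_T$ and $(N/X)\restric_T$ whose union is $T$. This is a nontrivial packing---hence a nontrivial wave---in $(M/X, N/X)$, contradicting the first paragraph. So the strict inequality holds, Edmonds produces the desired covering of $((M/X)/e,(N/X)/e)$, and this lifts to the required cohindrance.

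The main obstacle is precisely this last step: converting the ``no nontrivial wave'' property into a \emph{strict} rank inequality on sets containing $e$, rather than the loose inequality that is free. All other ingredients are direct applications of \autoref{maxwave}, \autoref{contract_waves}, and the finite Packing/Covering theorem.
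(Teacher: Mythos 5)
Your proof is correct, but it takes a genuinely different route from the paper's. The paper, after picking a maximal wave $X$, goes on to pick a maximal cowave $Y$ for $(M/(X+e), N/(X+e))$, applies Packing/Covering to $(M/X\sm Y, N/X\sm Y)$, identifies a cohindrance there, and then invokes the dual of \autoref{chain2} (which rests on \autoref{chain3} and the exchange-chain machinery of \autoref{runchains}) to steer its focus to $e$; the result is then lifted back via the dual of \autoref{contract_waves}. You instead stop after $X$ and work directly in $(M/X, N/X)$: you show every covering of $Y=E\sm X$ can be upgraded to one with $e$ in both sides by verifying the strict rank inequality $r_{M/X}(T)+r_{N/X}(T)\ge|T|+1$ for all $T\ni e$, deriving a contradiction from equality by extracting a nontrivial packing of $T$ from the existing covering. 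This replaces the chain-lemma machinery with a single appeal to the Edmonds/Nash-Williams covering formula plus an elementary counting argument. Each approach has its merits: the paper's stays entirely inside the wave/cowave toolkit already built in Section 4.1 (and \autoref{chain2}/\autoref{chain3} are needed elsewhere anyway), while yours is more self-contained and arguably more transparent about \emph{why} the cohindrance can be made to focus on $e$ — the strict rank inequality is exactly the obstruction, and your argument exposes it directly. One small stylistic remark: the sentence ``we may assume $e\in Y$'' would be better phrased as a deduction rather than an assumption — since no wave contains $e$ and $X$ is a wave, $e\notin X$ is forced.
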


\begin{proof}
We assume that $e$ is not contained in any wave.
Let $X$ be a maximal wave. 
Let $Y$ be a maximal cowave for $(M/(X + e),N/(X + e))$.
Since Packing/Covering holds for finite matroids \cite{BC:PC},
we can apply it to the pair $(M/X\sm Y,N/X\sm Y)$.
By \autoref{contract_waves}, $E\sm (X\cup Y)$ does not include a wave, so is a cowave. 
It cannot be a wave, so it is a cohindrance. 
By the dual of \autoref{contract_waves}, any cohindrance for $(M/X\sm Y,N/X\sm Y)$ contains $e$.
So by the dual of \autoref{chain2}, we get a cohindrance focusing on $e$ for $(M/X\sm Y,N/X\sm Y)$, which gives rise to a cohindrance focusing on $e$ for $(M,N)$ by the dual of \autoref{contract_waves}.
\end{proof}

\begin{lem}\label{intermediate_lemma7}
Let $M$ and $N$ be two matroids on a common finite ground set $E$. Let $e,f\in E$ distinct.
Assume that every nonempty wave for $(M,N)$ contains $e$.
Then in $(M/f,N\sm f)$  either $E-f$ is a cowave or there is a hindrance focusing on $e$.
\end{lem}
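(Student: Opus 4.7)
The plan is to prove the contrapositive. Assume in $(M/f,N\sm f)$ there is no hindrance focusing on $e$; I will show that $(M/f,N\sm f)$ admits a covering on all of $E-f$ (i.e.\ $E-f$ is a cowave). The crucial move is the following lift: if $(X,S^M,S^N)$ is a wave for $(M/f,N\sm f)$, then $(X\cup\{f\},S^M\cup\{f\},S^N\cup\{f\})$ is a wave for $(M,N)$ (since spanning in $M/f$ is the same as spanning modulo $f$ in $M$, and similarly for $N$), and a focus of the former remains a focus of the latter. This lift is what lets the hypothesis on $(M,N)$ rule out unwanted waves in $(M/f,N\sm f)$.

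Apply finite Packing/Covering to $(M/f,N\sm f)$ to obtain a partition $E-f=P'\dot\cup Q'$, together with a packing $(A^M,A^N)$ on $P'$ and a covering $(J^M,J^N)$ on $Q'$ in $(M/f,N\sm f).Q'$. If $P'=\emptyset$ we are already done. Otherwise the lift of $(P',A^M,A^N)$ is a nonempty wave in $(M,N)$, so by the hypothesis $e\in P'$. If $e\in P'\sm(A^M\cup A^N)$ then $(P',A^M,A^N)$ is itself a hindrance focusing on $e$ in $(M/f,N\sm f)$, contradicting our assumption; hence $e$ lies in exactly one of $A^M,A^N$. Say $e\in A^M$; the case $e\in A^N$ is treated symmetrically.

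If $A^M\setminus\{e\}$ still spans $P'$ in $M/f$, then $(P',A^M\setminus\{e\},A^N)$ is a hindrance focusing on $e$ (using disjointness of $A^M$ and $A^N$), again contradicting the assumption. Hence $A^M\setminus\{e\}$ does not span $P'$ in $M/f$, which forces every basis of $(M/f)\restric_{P'}$ contained in $A^M$ to contain $e$; fix such a basis $B^M\subseteq A^M$ and any basis $B^N\subseteq A^N$ of $(N\sm f)\restric_{P'}$. Then $B^M$ and $B^N$ are disjoint independent sets with $B^M\cup B^N\subseteq P'$.

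The main obstacle is the dichotomy on whether $B^M\cup B^N=P'$. If equality holds, then $(B^M,B^N)$ is a covering of $(M/f,N\sm f)\restric_{P'}$; and since $B^M$ is a basis of $(M/f)\restric_{P'}$ and $J^M$ is independent in the contraction $(M/f).Q'=(M/f)/P'$, the set $B^M\cup J^M$ is independent in $M/f$; symmetrically $B^N\cup J^N$ is independent in $N\sm f$; their union is $P'\cup Q'=E-f$, giving the desired covering of $(M/f,N\sm f)$. If instead $B^M\cup B^N\subsetneq P'$, then $(P',B^M,B^N)$ is a hindrance in $(M/f,N\sm f)$ with a focus in $P'\sm(B^M\cup B^N)$; \autoref{chain2} applied to this hindrance yields either a hindrance focusing on $e$ (contradicting our assumption) or a hindrance $(X',S^M,S^N)$ whose underlying set omits $e$, and the lift of the latter is a nonempty wave in $(M,N)$ that also omits $e$ (because $e\neq f$ and $e\notin X'$), contradicting the lemma's hypothesis. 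Thus this subcase cannot arise, completing the proof.
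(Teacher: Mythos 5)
Your argument hinges on the ``lift'' $(X,S^M,S^N)\mapsto(X+f,S^M+f,S^N+f)$, which you use twice (to deduce $e\in P'$, and at the very end). This lift is not a wave for $(M,N)$: a wave $(W,S^M,S^N)$ must have $S^M$ and $S^N$ disjoint (otherwise $W$ would not give a packing of $(M\restric_W,N\restric_W)$, and the definition of wave would be trivially satisfied by $S^M=S^N=W$), and after adding $f$ to both sides this fails. More seriously, the \emph{conclusion} you draw from the lift is false: it is not true that a nonempty wave in $(M/f,N\sm f)$ avoiding $e$ yields a nonempty wave in $(M,N)$ avoiding $e$. Take $E=\{e,f,x\}$, $M=U_{1,3}$, $N=U_{2,3}$. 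Then the only nonempty wave for $(M,N)$ is $E$ itself (so the hypothesis of the lemma holds), but $(M/f,N\sm f)=(U_{0,2},U_{2,2})$ has the nonempty wave $(\{x\},\emptyset,\{x\})$ avoiding $e$. In this example the finite Packing/Covering theorem may return $P'=\{x\}$, $Q'=\{e\}$, and your deduction ``$e\in P'$'' simply fails, so the whole case split that follows does not start. The conclusion of the lemma still holds here --- $\{e,x\}$ is a cowave for $(U_{0,2},U_{2,2})$ --- but your argument gives no way to reach it.

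The lifting you want \emph{is} available, but only for \emph{hindrances} and only with a weaker conclusion: this is exactly \autoref{chain3}, which says that a hindrance in $(M/f,N\sm f)$ gives rise, in $(M,N)$, either to a wave on the same underlying set or to a hindrance contained in it (both nonempty). This is proved via exchange chains and is not a one-line manipulation. Your final appeal to the lift (after invoking \autoref{chain2}) could be replaced by \autoref{chain3}, since there you do have a hindrance avoiding $e$; but the earlier use, on the packing $(P',A^M,A^N)$, applies the lift to a wave that need not be a hindrance, and there is no analogue of \autoref{chain3} for that. The paper's proof sidesteps all of this: it assumes $E-f$ is not a cowave, uses the dual of \autoref{maxwave} to find an edge $g$ in no cowave, the dual of \autoref{chain4} to get a hindrance focusing on $g$, then \autoref{chain2} to trade this for a hindrance focusing on $e$ or one avoiding $e$, and finally \autoref{chain3} to rule out the latter. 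If you want to salvage your plan, you should make \autoref{chain3} (or its exchange-chain argument) your ``crucial move'' rather than the naive lift, and you would also need to restructure the second paragraph, since there is no guarantee that $e\in P'$.
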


\begin{proof}
We assume that  $E-f$ is not a cowave for $(M/f,N\sm f)$. 
Then by \autoref{maxwave}, there is an edge $g$ not in any cowave for $(M/f,N\sm f)$. By the dual of \autoref{chain4},
we get that there is a hindrance $(X,S^M,S^N)$ for $(M/f,N\sm f)$ focusing on $g$.
Now we apply \autoref{chain2} to $(X,S^M,S^N)$ and the edge $e$.
To show that there is a hindrance focusing on $e$, 
it suffices to show that there cannot be a hindrance $(X',{S'}^M,{S'}^N)$ with $e\notin X'$.
If there were, then by \autoref{chain3} we would get that $X'$ is a wave for $(M,N)$ or that there is a 
hindrance $X''\se X'$ for $(M,N)$. Both of these contradict the assumption that every wave contains $e$. Thus there is a hindrance focusing on $e$, which completes the proof.
\end{proof}

 \subsection{Trees of matroids}

\begin{dfn}
A {\em tree $\Tcal$ of matroids} consists of a tree $T$, together with a function $\overline{M}$ assigning to each node $t$ of $T$ a matroid $\overline{M}(t)$ on the ground set $E(t)$, such that for any two nodes $t$ and $t'$ of $T$, if $E(t) \cap E(t')$ is nonempty then $tt'$ is an edge of $T$.

For any edge $tt'$ of $T$ we set $E(tt') = E(t) \cap E(t')$. We also define the {\em ground set} of $\Tcal$ to be $E = E(\Tcal) = \left(\bigcup_{t \in V(T)} E(t)\right) \setminus \left(\bigcup_{tt' \in E(T)} E(tt')\right)$. 

We shall refer to the edges which appear in some $E(t)$ but not in $E$ as {\em dummy edges} of $\overline{M}(t)$: thus the set of such dummy edges is $\bigcup_{tt' \in E(T)} E(tt')$.
\end{dfn}

The idea is that the dummy edges are to be used only to give information about how the matroids are to be pasted together, but they will not be present in the final pasted matroid, which will have ground set $E(\Tcal)$. 
We will now consider a type of pasting corresponding to 2-sums. We will make use of some additional information to control the behaviour at infinity: a set $\Psi$ of ends of $T$. 

\begin{dfn}
A tree $\Tcal = (T, \overline{M})$ of matroids is {\em of overlap 1} if, for every edge $tt'$ of $T$, $|E(tt')| = 1$. In this case, we denote the unique element of $E(tt')$ by $e(tt')$. Given $F\se E(T)$, let $e``F=\{e(f)\mid f\in F\}$.

Given a tree of matroids of overlap 1 as above and a set $\Psi$ of ends of $T$, a {\em $\Psi$-pre-circuit} of $\Tcal$ consists of a connected subtree $S$ of $T$ together with a function $\overline{o}$ assigning to each vertex $t$ of $S$ a circuit of $\overline{M}(t)$, such that all ends of $S$ are in $\Psi$ and for any vertex $t$ of $S$ and any vertex $t'$ adjacent to $t$ in $T$, $e(tt') \in \overline{o}(t)$ if and only if $t' \in S$. The set of $\Psi$-pre-circuits is denoted $\overline\Ccal(\Tcal, \Psi)$. 

Any $\Psi$-pre-circuit $(S, \overline{o})$ has an {\em underlying set} $\underline{(S, \overline{o})} = E \cap \bigcup_{t \in V(S)} \overline{o}(t)$. Minimal nonempty subsets of $E$ arising in this way are called {\em $\Psi$-circuits} of $\Tcal$. The set of $\Psi$-circuits of $\Tcal$ is denoted $\Ccal(\Tcal, \Psi)$.
\end{dfn}

We shall rely on the following theorem and lemmas.

\begin{thm}[\cite{BC:determinacy}]\label{psimat}
 Let $\Tcal = (T, \overline{M})$ be a tree of matroids of overlap 1 and $\Psi$ a Borel set of ends of $T$, then there is a matroid $M_\Psi(T,\overline{M})$ whose circuits are the $\Psi$-circuits.
\end{thm}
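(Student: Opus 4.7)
The plan is to verify the four infinite matroid circuit axioms from \cite{matroid_axioms}, namely (C1), (C2), (C3) and (CM), for the candidate family $\Ccal(\Tcal,\Psi)$. The first two are essentially by fiat: (C1), that the empty set is not a circuit, holds because $\Psi$-circuits are required to be \emph{nonempty} minimal underlying sets; and (C2), the antichain condition, is an immediate consequence of the minimality clause in the definition.

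For (C3), the strong circuit elimination axiom, the plan is to reduce to finite circuit elimination in a single node's matroid. Given $\Psi$-circuits $C_1 \ne C_2$ arising from pre-circuits $(S_1,\overline{o}_1)$ and $(S_2,\overline{o}_2)$, with $e \in C_1 \cap C_2$ and $z \in C_1 \sm C_2$, I would use that $e$, being a non-dummy edge, lies in a unique node $t_e$ of $T$ which necessarily belongs to both $V(S_1)$ and $V(S_2)$, with $e \in \overline{o}_1(t_e) \cap \overline{o}_2(t_e)$. Apply strong circuit elimination in the finite matroid $\overline{M}(t_e)$ to obtain a circuit $\overline{o}(t_e) \se (\overline{o}_1(t_e) \cup \overline{o}_2(t_e)) \sm \{e\}$ retaining an element tracking toward $z$. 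Then extend $\overline{o}$ outward: for each neighbour $t$ of $t_e$ in $T$ with $e(t_et) \in \overline{o}(t_e)$, choose $\overline{o}_i(t)$ from whichever $S_i$ contains $t$ and the dummy edge $e(t_et)$, and iterate. The resulting connected subtree with circuit assignment is a $\Psi$-pre-circuit because its ends are limits of ends of $S_1 \cup S_2$ and hence in $\Psi$; passing to a minimal nonempty subset of its underlying set containing $z$ yields the required $\Psi$-circuit.

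The main obstacle is axiom (CM): for every $I \se X \se E$ with $I$ independent (containing no $\Psi$-circuit), a maximal independent $J$ with $I \se J \se X$ must exist. A direct Zorn's Lemma argument fails since a union of an ascending chain of independent sets can pick up a $\Psi$-pre-circuit in the limit, precisely when a branch of $T$ whose end lies in $\Psi$ is completed. The strategy here is game-theoretic: associate to $(\Tcal,\Psi,I,X)$ a two-player game in which one player attempts to augment $I$ by elements of $X$ while the other player builds, node by node along a branch of $T$, a candidate $\Psi$-pre-circuit trapped inside the proposed augmentation. The play produces a branch of $T$ converging to some end, and the winning condition is determined by whether that end belongs to $\Psi$. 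Since $\Psi$ is Borel and the map from plays to ends is continuous, the winning condition is Borel, so Martin's theorem yields determinacy. A winning strategy for the augmenter produces the required maximal independent $J$ by taking the union of its responses along any branch; a winning strategy for the opposing player would yield a $\Psi$-pre-circuit inside $I$, contradicting the independence of $I$.

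The hardest part is designing the game so that its winning condition really is a continuous pullback of $\Psi$, and so that winning strategies translate faithfully into the combinatorial data we need, i.e.\ a genuinely maximal $J$ or a genuine contained $\Psi$-pre-circuit rather than just an approximation. This is where the subtlety of the tree structure and the interplay between local (finite matroid) circuit exchanges and global end-based constraints becomes delicate, and this is the bulk of the technical work in \cite{BC:determinacy}.
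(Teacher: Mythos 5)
This theorem is cited in the paper from \cite{BC:determinacy} and is not proved here, so there is no proof in this paper against which to compare your attempt. Still, the general outline you give does match the strategy of the cited reference: one verifies the circuit axioms of \cite{matroid_axioms}, the substance lies in the maximality axiom (CM), and that axiom is handled via determinacy of a suitable game whose winning condition is tied to whether a branch-limit lies in $\Psi$. The paper's own closing remarks confirm that the matroid property is witnessed by determinacy of games, and that the reference actually works with a whole \emph{collection} of continuous maps $f$ and asks that each $f^{-1}\Psi$ be determined, rather than a single game; Borel $\Psi$ is then a sufficient condition via Martin's theorem.

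There is a genuine gap in your sketch of the game for (CM). You describe the opponent as building ``node by node along a branch of $T$, a candidate $\Psi$-pre-circuit trapped inside the proposed augmentation,'' and you want the winning condition to be pulled back from $\Psi$ through the continuous map sending a play to the end of the branch it traces. But a $\Psi$-pre-circuit is a connected \emph{subtree} $S$ of $T$, which in general has infinitely many ends, all of which must lie in $\Psi$. A game that plays out a single branch cannot by itself certify that every end of the intended subtree lies in $\Psi$, so a winning strategy for the opponent as you have set it up does not directly yield a $\Psi$-pre-circuit inside $I$. Making this work requires a more careful design --- either the opponent commits, move by move, to finite pieces of a subtree while the other player chooses which branch the play follows (so that a winning strategy controls all branches simultaneously), or one works with a family of games indexed over the continuous maps alluded to above. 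This is exactly the ``delicate interplay'' you flag at the end, but as stated your game does not yet encode the right condition.

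Your treatment of (C3) is also underspecified: after eliminating at $t_e$, the dummy edges you keep may belong to $\overline{o}_1(t)$, to $\overline{o}_2(t)$, or to both, and you need a consistent rule for extending outward; moreover, what this process produces is a pre-circuit whose underlying set is a union of $\Psi$-circuits (cf.\ \autoref{precirc_is_scrawl}), not a single circuit, so one must still argue that a $\Psi$-circuit through $z$ can be extracted from it. These are repairable, but they are real steps, not bookkeeping.
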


\begin{lem}[\cite{BC:determinacy}]\label{precirc_is_scrawl}
 In the context of \autoref{psimat}, the underlying set $\underline{(S, \overline{o})}$ of any $\Psi$-precircuit $(S, \overline{o})$ is a union of $\Psi$-circuits.
\end{lem}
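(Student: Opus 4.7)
The plan is to reduce the lemma to the following claim: for every $e \in \underline{(S, \overline{o})}$ there is a $\Psi$-circuit $C$ with $e \in C \subseteq \underline{(S, \overline{o})}$. Ranging $e$ over $\underline{(S, \overline{o})}$ and taking the union of these circuits then exhibits $\underline{(S, \overline{o})}$ as a union of $\Psi$-circuits. So I fix such an $e$ (the case $\underline{(S, \overline{o})} = \emptyset$ being vacuous) and let $t_0$ be the unique node of $S$ with $e \in \overline{o}(t_0)$.

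My approach is to apply Zorn's lemma to the family $\Fcal$ of $\Psi$-pre-circuits $(S', \overline{o}')$ such that $\underline{(S', \overline{o}')} \subseteq \underline{(S, \overline{o})}$ and $e \in \underline{(S', \overline{o}')}$, ordered by reverse inclusion of underlying sets. Since $(S, \overline{o}) \in \Fcal$, the family is nonempty, and I claim that any maximal element $(S^\circ, \overline{o}^\circ)$ has underlying set $X^\circ$ which is a $\Psi$-circuit. Indeed, if $X^\circ$ properly contained the underlying set $Y$ of some other $\Psi$-pre-circuit $(S'', \overline{o}'')$, maximality in $\Fcal$ would force $e \notin Y$, so $Y \subseteq X^\circ - e$; a circuit-elimination at a vertex where the data of the two pre-circuits overlap (such a vertex exists because $Y \subseteq X^\circ$ meets the support of $(S^\circ, \overline{o}^\circ)$) in the finite matroid $\overline{M}(t)$ then yields a strictly smaller pre-circuit still containing $e$, contradicting maximality.

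The step I expect to be the main obstacle is the verification of Zorn's chain condition: given a descending chain $(S_\alpha, \overline{o}_\alpha)_{\alpha \in I}$ in $\Fcal$ I must produce a $\Psi$-pre-circuit $(S^*, \overline{o}^*) \in \Fcal$ whose underlying set lies in $\bigcap_\alpha \underline{(S_\alpha, \overline{o}_\alpha)}$. My plan here exploits that each vertex matroid $\overline{M}(t)$ is finite (the setting of \cite{BC:determinacy}): for every $t \in V(T)$ the trace $\alpha \mapsto (\mathbf{1}_{t \in V(S_\alpha)}, \overline{o}_\alpha(t))$ takes values in a finite set, so after passing to a well-ordered cofinal subsequence of $I$ one can stabilise the data at each vertex simultaneously via a diagonal argument across the at-most-countable $V(T)$ (alternatively, via a direct application of \autoref{Infinity_Lemma} when $I$ is countable). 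Setting $S^*$ to be the vertices whose stabilised trace places them in the tree and $\overline{o}^*(t)$ the stabilised circuit gives the candidate limit.

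What then remains is routine: $S^*$ is a connected subtree, since $t_0 \in V(S^*)$ and every $T$-path from $t_0$ to a vertex $t \in V(S^*)$ lies in each $S_\alpha$ of the stable chunk and hence in $S^*$; every end of $S^*$ corresponds to a ray of $T$ whose tail lies in each $S_\alpha$, hence is an end of each $S_\alpha$ and so lies in $\Psi$; and the pre-circuit compatibility condition at each vertex of $S^*$ survives the limit because it holds cofinally along the chain. Once the chain condition is established, Zorn together with the circuit-elimination claim above supplies the desired $\Psi$-circuit through $e$ inside $\underline{(S, \overline{o})}$.
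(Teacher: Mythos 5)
The paper does not actually give a proof of this lemma --- it is cited from \cite{BC:determinacy} --- so I can only assess your argument on its own terms. There are two genuine gaps, one in the Zorn step and one (more serious) in the circuit-elimination step.

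\textbf{The chain condition.} You claim that after the diagonal stabilisation ``every end of $S^*$ corresponds to a ray of $T$ whose tail lies in each $S_\alpha$,'' but the diagonalisation does not deliver this. For a countable $V(T)$ and an $\omega$-cofinal subchain, the diagonal argument only gives \emph{per-vertex eventual} stability: for each $t$ there is an index $\alpha_t$ beyond which the trace at $t$ is constant, and $\alpha_t$ depends on $t$. A ray $R=(u_0,u_1,\dots)$ of $S^*$ need not have any of its infinite tails contained in a single $S_\alpha$, because $\alpha_{u_k}$ can go to infinity with $k$. So the ends of $S^*$ are not manifestly ends of any $S_\alpha$, and the required condition ``all ends of $S^*$ lie in $\Psi$'' is not established. (Whether the decreasing-underlying-sets hypothesis secretly prevents this bad behaviour is unclear; it is at least not automatic and needs an argument you do not supply.)

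\textbf{The maximality step.} This is where the real content lies, and the one-line appeal to ``circuit-elimination at a vertex ... yields a strictly smaller pre-circuit'' hides the whole difficulty. Performing circuit elimination inside $\overline{M}(t)$ at a single overlap vertex gives you one new local circuit; it does not give you a $\Psi$-pre-circuit. You would need to (a) propagate this local change outward through the tree so that at every node the consistency condition ``$e(tt')\in\overline{o}(t)$ iff $t'\in S$'' still holds, (b) ensure the resulting subtree has all its ends in $\Psi$, and (c) ensure the eliminated element is a real edge of $X^\circ$ so that the new underlying set is \emph{strictly} smaller. None of these is routine. In fact proving that minimal underlying sets satisfy a circuit-elimination-type property is essentially the hard part of \autoref{psimat}, so invoking it informally here is close to circular.

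A cleaner route, which avoids both issues, is available via duality. By \autoref{ToMminor} pass to $M_\Psi\restric_X$, where $X=\underline{(S,\overline{o})}$. It suffices to show $e$ is not a coloop there; then by the basic closure fact there is a circuit of $M_\Psi\restric_X$ through $e$. If $e$ were a coloop, $\{e\}$ would be a cocircuit, i.e.\ (using the duality $M_\Psi(T,\overline M)^*=M_{\Psi^\complement}(T,\overline M^*)$, which underlies the definition of the Covering game in \autoref{Pgame}) the underlying set of a $\Psi^\complement$-pre-circuit $(S'',\overline{o}'')$ for the dual tree. Now at $t_0$ the circuit $\overline{o}(t_0)$ and the cocircuit $\overline{o}''(t_0)$ both contain $e$, so by orthogonality they share another element, which must be a dummy edge since $\overline{o}''(t_0)\cap E=\{e\}$. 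Iterating the orthogonality argument along the tree produces a ray contained in $S\cap S''$; its end would then lie in both $\Psi$ and $\Psi^\complement$, a contradiction. This sidesteps Zorn, circuit elimination, and any concerns about limits of trees.
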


\begin{lem}[\cite{BC:determinacy}, Lemma 5.4]\label{ToMminor}
 Let $M_{\Psi}(T, \overline M)$ be a matroid as above and let $C$ and $D$ be disjoint subsets of its ground set. Then
$$M_{\Psi}(T, \overline M)/C\backslash D = M_{\Psi}(T, t \mapsto \overline M(t) / C \backslash D)$$
\end{lem}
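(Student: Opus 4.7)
Both matroids share the ground set $E(\Tcal)\sm(C\cup D)$, since dummy edges lie outside $E(\Tcal)$ and hence outside $C\cup D$. Writing $\Tcal'=(T,t\mapsto\overline M(t)/C_t\sm D_t)$ with $C_t=C\cap E(t)$ and $D_t=D\cap E(t)$, the plan is to prove the two matroids have the same circuits by establishing mutual containment of the circuit families and then collapsing the resulting sandwich by minimality: I will show every $\Psi$-circuit of $\Tcal'$ contains a circuit of $M_\Psi(\Tcal)/C\sm D$, and vice versa, which forces them to coincide since each $\Psi$-circuit is an inclusion-minimal underlying set of a $\Psi$-precircuit and each matroid-circuit is an inclusion-minimal dependent set.

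The easier direction is from $\Tcal'$ into the minor. Starting from a $\Psi$-precircuit $(S,\overline o')$ of $\Tcal'$ with underlying set $X$, lift each $\overline o'(t)$ to a circuit $\overline o(t)$ of $\overline M(t)$ disjoint from $D_t$ with $\overline o(t)\sm C_t=\overline o'(t)$, using the standard description of circuits of a minor. Since the dummy edges of $\Tcal$ are not in $C\cup D$, the compatibility condition $e(tt')\in\overline o(t)\Leftrightarrow t'\in V(S)$ transfers verbatim from $\overline o'$ to $\overline o$, so $(S,\overline o)$ is a $\Psi$-precircuit of $\Tcal$. Its underlying set is $X\cup X_C$ for some $X_C\se C$, disjoint from $D$; by \autoref{precirc_is_scrawl} this set is a union of $\Psi$-circuits of $\Tcal$, and because $X$ is non-empty at least one such circuit $C'$ fails to lie inside $C$. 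Then $C'\sm C$ is a non-empty subset of $X$ that is dependent in $M_\Psi(\Tcal)/C\sm D$ and therefore contains a circuit of that minor.

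The reverse direction is where the main obstacle lies: given a circuit $Y=Y^+\sm C$ of $M_\Psi(\Tcal)/C\sm D$ realised by a $\Psi$-precircuit $(S,\overline o)$ of $\Tcal$ with $\underline{(S,\overline o)}=Y^+$ disjoint from $D$, each $\overline o(t)\sm C_t$ is in general only a \emph{scrawl} (union of circuits) of $\overline M(t)/C_t\sm D_t$, not a single circuit, so the naive node-wise quotient of $(S,\overline o)$ usually fails to define a $\Psi$-precircuit on the subtree $S$. I sidestep this by building $\Psi$-precircuits of $\Tcal'$ element-by-element. Fix $x\in Y$ and let $t_0\in V(S)$ be the node with $x\in\overline o(t_0)$; since $x$ lies in the scrawl $\overline o(t_0)\sm C_{t_0}$, there is a circuit $\overline o'(t_0)$ of $\overline M(t_0)/C_{t_0}\sm D_{t_0}$ with $x\in\overline o'(t_0)\se\overline o(t_0)\sm C_{t_0}$. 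Extend outwards recursively: whenever a chosen $\overline o'(t)$ contains a dummy $e(tt')$, the compatibility condition for $(S,\overline o)$ forces $t'\in V(S)$ and $e(tt')\in\overline o(t')$, so add $t'$ and pick a circuit $\overline o'(t')$ of $\overline M(t')/C_{t'}\sm D_{t'}$ with $e(tt')\in\overline o'(t')\se\overline o(t')\sm C_{t'}$. Because $T$ is a tree, any two adjacent vertices of the resulting subtree $S'$ stand in a parent--child relation in this breadth-first search, which makes the compatibility condition for $(S',\overline o')$ automatic. The ends of $S'$ are ends of $S$, hence in $\Psi$. So $(S',\overline o')$ is a $\Psi$-precircuit of $\Tcal'$ whose underlying set contains $x$ and lies inside $Y$, giving a $\Psi$-circuit of $\Tcal'$ inside $Y$.

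Combining: every $\Psi$-circuit $X$ of $\Tcal'$ contains a minor-circuit $Y\se X$, which contains a $\Psi$-circuit $X''\se Y$ of $\Tcal'$; minimality of $\Psi$-circuits gives $X''=X$, hence $Y=X$, so $X$ is itself a circuit of the minor. The symmetric argument handles the reverse, proving the two matroids have the same circuits and hence are equal.
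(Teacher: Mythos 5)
The paper does not prove this lemma; it is cited verbatim from \cite{BC:determinacy} (Lemma~5.4), so there is no internal argument to compare your proposal against. Your proof is therefore evaluated on its own terms, and it is correct. The overall strategy --- establishing that each $\Psi$-circuit of $\Tcal'=(T,\,t\mapsto\overline M(t)/C\sm D)$ contains a circuit of the minor and conversely, then collapsing the resulting sandwich $X\supseteq Y\supseteq X''$ by minimality --- is sound, and the two implications are handled properly. In particular, you correctly identify the one genuine subtlety: in the direction from the minor to $\Tcal'$, the naive node-wise quotient $t\mapsto\overline o(t)\sm C_t$ of a $\Psi$-precircuit for $Y^+$ need not yield a $\Psi$-precircuit of $\Tcal'$, since $\overline o(t)\sm C_t$ may be a proper scrawl rather than a circuit of $\overline M(t)/C_t\sm D_t$. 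Your outward breadth-first reselection of a local circuit $\overline o'(t)\se\overline o(t)\sm C_t$ through each dummy edge that has already appeared fixes this; and because $T$ is a tree, every adjacency inside the resulting subtree $S'$ is a parent--child edge in the search, which is exactly what makes the compatibility condition automatic and the end condition inherited from $S$. Two small points worth keeping explicit for a clean write-up: (i) the lift in the easy direction uses that a circuit $O$ of $M/C\sm D$ (with $C\cap D=\emptyset$) equals $O''\sm C$ for a circuit $O''$ of $M$ which is \emph{automatically} disjoint from $D$, and that the dummy edges, being outside $C\cup D$, are preserved under this lift, so compatibility is literally unchanged; and (ii) the fact that $C'\sm C$ is dependent in $M/C$ whenever $C'$ is a circuit with $C'\not\se C$ should be stated explicitly, since it is the hinge of the dependence claim in that direction. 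Neither is a gap --- both are standard and you implicitly invoke them --- but making them visible would tighten the exposition.
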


\begin{dfn}\label{Tsub}
 If $T$ is a tree, and $tu$ is a (directed) edge of $T$, we take $T_{t \to u}$ to be the connected component of $T - t$ that contains $u$. If $\Tcal = (T, \overline{M})$ is a tree of matroids, we take $\Tcal_{t \to u}$ to be the tree of matroids $(T_{t \to u}, \overline{M} \restric_{T_{t \to u}})$.
\end{dfn} 

We will also need the following lemma, which allows us to choose families of compatible pre-circuits:

\begin{lem}\label{pick_circuits}
Let $T$ be a rooted tree with root $t_0$, let $\Tcal = (T, \overline M)$ be a tree of matroids of overlap 1 and let $\Psi$ be a Borel set of ends of $T$. Let $X$ be any subset of $E(M_{\Psi}(\Tcal))$, and let $U$ be the set of nodes $t$ of $T$ such that $e(t^-t)$ is spanned by $X \cap E(\Tcal_{t^- \to t})$ in $M_{\Psi}(\Tcal_{t^- \to t})$. Then there is a choice of a $\Psi$-precircuit $(S_t, \overline{o}_t)$ in $\Tcal_{t^- \to t}$ for each $t \in U$ witnessing this in the sense that $e(t^-t) \in \underline{(S_t, \overline{o}_t)} \subseteq X + e(t^-t)$ and such that for any nodes $u$, $v$ and $w$ with $w \in S_u \cap S_v$ we have $\overline{o}_u(w) = \overline{o}_v(w)$.
 \end{lem}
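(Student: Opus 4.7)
The plan is to construct the pre-circuits $(S_t, \overline{o}_t)$ by recursion on the depth of $t$ in $T$. For each $t \in U$, the hypothesis that $e(t^-t)$ lies in the $M_\Psi(\Tcal_{t^-\to t})$-span of $X\cap E(\Tcal_{t^-\to t})$ guarantees some witnessing $\Psi$-pre-circuit (its underlying set contains a circuit containing $e(t^-t)$ inside $X+e(t^-t)$), so the only issue is coherence. When processing $t$, I would check whether some strict ancestor $s\in U$ of $t$ already has $t\in S_s$ in the previously defined $(S_s,\overline{o}_s)$. If so, take the closest such $s$ and inherit by restriction: $S_t := S_s \cap T_{t^-\to t}$ and $\overline{o}_t := \overline{o}_s\restric_{V(S_t)}$. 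Otherwise pick any witnessing pre-circuit freshly.

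The heart of the argument is verifying that the inherited pair is a valid witnessing $\Psi$-pre-circuit in $\Tcal_{t^-\to t}$. Connectedness of $S_t$ and the pre-circuit incidence condition are routine: $T_{t^-\to t}$ is the subtree of $T$ rooted at $t$, so the path in $S_s$ between any two elements of $S_t$ stays inside $T_{t^-\to t}$, and the neighbours of any $u\in S_t$ inside $T_{t^-\to t}$ form a subset of its neighbours in $T_{s^-\to s}$ on which the two precircuit conditions agree. The substantive points are (i) that $e(t^-t)\in\underline{(S_t,\overline{o}_t)}$, which holds because connectedness of $S_s$ containing $s$ and $t$ forces the $s$-to-$t$ path (passing through $t^-$) into $S_s$, so $t^-\in S_s$ and hence $e(t^-t)\in\overline{o}_s(t)$ by the incidence condition in $(S_s,\overline{o}_s)$, and this edge is regular (not dummy) in $\Tcal_{t^-\to t}$; and (ii) that $\underline{(S_t,\overline{o}_t)}\subseteq X+e(t^-t)$, which needs a case split on each $x\in\overline{o}_s(u)$ with $u\in S_t$: either $x$ was already regular in $\Tcal_{s^-\to s}$, in which case $x\in\underline{(S_s,\overline{o}_s)}\subseteq X+e(s^-s)$, with $x=e(s^-s)$ excluded since that edge can only appear in $\overline{o}_s(s)$ while $s\notin S_t$; or $x$ was dummy in $\Tcal_{s^-\to s}$ but regular in $\Tcal_{t^-\to t}$, which forces $x=e(t^-t)$.

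For compatibility, suppose $w\in S_u\cap S_v$. Since $S_u\subseteq T_{u^-\to u}$ and $S_v\subseteq T_{v^-\to v}$, and these subtrees are disjoint whenever $u$ and $v$ lie in unrelated branches of $T$, we may assume WLOG that $u$ is an ancestor of $v$. Connectedness of $S_u$ (containing $u$ and $w$, with the unique $u$-to-$w$ path passing through $v$) then yields $v\in S_u$, so the recursive clause was invoked when defining $v$; induction on the depth-difference, using transitivity of restrictions along the chain of ancestors of $v$ in $U$ between $u$ and $v$, delivers $\overline{o}_v(w)=\overline{o}_u(w)$. The main obstacle is the bookkeeping in step (ii) above, where one must carefully track which edges count as dummy versus regular in the two different trees of matroids $\Tcal_{s^-\to s}$ and $\Tcal_{t^-\to t}$, since an edge that is dummy in one may be regular in the other and conversely.
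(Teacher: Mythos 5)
Your proposal is correct and takes essentially the same route as the paper: recursion on depth, inheritance of the pre-circuit from an ancestor $s$ with $t \in S_s$, and the same use of connectedness together with the ``subtree rooted at $u$ / subtree rooted at $v$'' dichotomy to reduce the coherence claim to the case where $u$ is an ancestor of $v$. The only cosmetic difference is that you inherit from the \emph{closest} qualifying ancestor and close the coherence argument by induction on depth-difference, whereas the paper inherits from the $\leq_T$-minimal (i.e.\ nearest-to-root) one and identifies a single common minimal ancestor --- these produce the same pre-circuits, since the chain of inheritances always telescopes back to the same freshly-chosen node; you also spell out the check that the restricted $(S_t,\overline o_t)$ really does witness spanning of $e(t^-t)$ inside $X+e(t^-t)$, which the paper leaves implicit, and your case analysis there is right (the one slip --- a parenthetical claim that an edge dummy in $\Tcal_{t^-\to t}$ could be regular in $\Tcal_{s^-\to s}$, which cannot happen since $T_{t^-\to t}\subseteq T_{s^-\to s}$ --- is not used in the argument and is harmless).
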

 \begin{proof}
 We denote the tree-order in $T$ by $\leq_T$.
 
 We construct the pre-circuits $(S_t, \overline{o}_t)$ recursively in the height of $t$ in $T$, so for each $n$ we choose all $(S_t, \overline{o}_t)$ with $t$ at height $n$ before choosing those with greater heights. When choosing $(S_u, \overline{o}_u)$, we first of all check whether there is some $t <_T u$ with $u \in S_t$. If so, we pick $t$ minimal with this property and let $S_u = S_t \cap T_{u^- \to u}$ and $\overline{o}_u(v) = \overline{o}_t(v)$ for each $v \in S_u$. Otherwise, we pick any $(S_u, \overline{o}_u)$ such that  $e(u^-u) \in \underline{(S_u, \overline{o}_u)} \subseteq X + e(u^-u)$: there is some such pre-circuit since $u \in U$. 
 
The only thing to check is that for any nodes $u$, $v$ and $w$ with $w \in S_u \cap S_v$ we have $\overline{o}_u(w) = \overline{o}_v(w)$. So suppose we have such $u$, $v$ and $w$. By construction, $u \leq_T w$ and $v \leq_T w$, so without loss of generality $u \leq_T v$. Since $u \leq_T v \leq_T w$ and both $u$ and $w$ are in $S_u$, we must also have $v \in S_u$. Let $t \leq_T u$ be minimal with $u \in S_t$. Then by construction, since $v \in S_u$ we also have $v \in S_t$. Further, $t$ is minimal with $v \in S_t$ since if there were $t' <_T t$ with $v \in S_{t'}$ we would also have $u \in S_{t'}$ (since $t' \leq_T u \leq_T v$), contradicting our choice of $t$. Thus $\overline{o}_u(w) = \overline{o}_t(w) = \overline{o}_v(w)$, as required.
 \end{proof}

Our main result will be the following:

\begin{thm}\label{mainthm}
 Let $(T, \overline M)$ and $(T, \overline N)$ be trees of matroids of overlap 1 such that for any $t \in V(T)$, the matroids $\overline M(t)$ and $\overline N(t)$ have the same finite ground set $E(t)$. Let $\Psi_M$ and $\Psi_N$ be Borel sets of ends of $T$. Then the pair $(M_{\Psi_M}(T, \overline M), M_{\Psi_N}(T, \overline N))$ of matroids satisfies the Packing/Covering Conjecture.
\end{thm}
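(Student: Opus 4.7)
The plan is to follow the roadmap of \autoref{proof_strategy}. By \autoref{PCfromwave} combined with \autoref{ToMminor} (which shows that the class of pairs of matroids under consideration is closed under contracting subsets of the ground set), it suffices to prove that for every edge $e$ there is either a wave containing $e$ or a cohindrance focusing on $e$ in $(M_{\Psi_M}(T, \overline M), M_{\Psi_N}(T, \overline N))$. A first reduction, carried out in \autoref{block}, will let us assume that $\Psi_M$ and $\Psi_N$ partition the set of ends of $T$: ends belonging to both or to neither can be absorbed by splitting $T$ at suitable nodes and adjusting the matroids at those nodes, which does not affect tameness or the Borel hypothesis.

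Next, I would introduce the Packing and Covering games from \autoref{games}. In the Packing game, Packer incrementally builds a wave focusing on $e$ by specifying, at each node $t$ visited, a wave of the finite pair $(\overline M(t), \overline N(t))$; the interaction with dummy edges is governed by the six types of promises classified in \autoref{games}, corresponding to how a wave at a 2-separation can rely on spanning information coming from the other side. Coverina's role is to challenge an outstanding promise, forcing Packer to move to the adjacent node and continue. The Covering game is the dual, with Coverina building a cohindrance focusing on $e$ and Packer challenging. The two games are designed so that Packer (resp. Coverina) has a winning strategy if and only if the desired wave (resp. cohindrance) exists; this follows from combining \autoref{pick_circuits} with a K\"onig's-Lemma-style assembly argument (\autoref{Infinity_Lemma}) that glues an infinite winning play into a global wave or cohindrance.

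The payoff sets of both games are Borel in the product topology, essentially because membership in a wave or cohindrance is determined by $\Psi_M$-circuits and $\Psi_N$-circuits, and these are built from the Borel sets $\Psi_M$ and $\Psi_N$ by continuous operations. Hence by Martin's Borel determinacy theorem both games are determined. If neither a wave containing $e$ nor a cohindrance focusing on $e$ existed, then Coverina would have a winning strategy $\tau$ in the Packing game and Packer would have a winning strategy $\sigma$ in the Covering game.

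The hard part — and the heart of \autoref{main} — is to derive a contradiction from the simultaneous existence of such $\sigma$ and $\tau$. The idea is to play them off against each other by a recursive construction that produces, in parallel, an infinite play in $\sigma$ and an infinite play in $\tau$. At each step the active node is some $t \in V(T)$ and the local question reduces to a statement about the finite pair $(\overline M(t), \overline N(t))$ with certain dummy edges required to be handled in prescribed ways (spanned/not spanned, in the focus or not, etc.). The plain finite Packing/Covering theorem is not sufficient for this; we need the strengthened local statements of \autoref{lemma27} and \autoref{lemma17} (proved in \autoref{cases}) that guarantee finite waves or cohindrances with prescribed behaviour on specified dummy edges. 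Applying such a lemma at the active node lets us either extend both plays consistently or produce a finite move that contradicts one of $\sigma$, $\tau$ being winning. The main obstacle is thus the careful bookkeeping of promises and counter-promises at each node and the verification that the strengthened finite lemma always applies; granting those lemmas, the recursion yields an infinite play in one of the two strategies on which that strategy loses, contradicting the assumption that it is winning, and thereby finishing the proof.
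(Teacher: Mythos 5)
Your outline of the proof of \autoref{mainprop} (Sections \ref{main}--\ref{cases}) tracks the paper's actual argument well: reduce via \autoref{PCfromwave} and \autoref{ToMminor} to finding a wave through $e$ or a cohindrance at $e$; encode this in the Packing and Covering games; invoke Borel determinacy; play a hypothetical pair of Coverina/Packer winning strategies off against each other, using the strengthened finite lemmas \autoref{lemma27} and \autoref{lemma17} at each node; and extract an infinite play losing for one of the supposedly winning strategies.

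However, your account of the reduction from \autoref{mainthm} to \autoref{mainprop} --- ``ends belonging to both or to neither can be absorbed by splitting $T$ at suitable nodes and adjusting the matroids at those nodes'' --- is not correct and papers over the genuinely hard content of \autoref{block}. There is no local surgery on nodes that converts a tree with ends in $\Psi_M \cap \Psi_N$ or in $(\Psi_M\cup\Psi_N)\ct$ into one where the two sets partition $\Omega(T)$: such ends can be dense, and the simplest case ($T$ a ray, $\omega\in\Psi_M\cap\Psi_N$) already has no nontrivial split available. In the paper the reduction is itself a game-theoretic argument. One assumes the failure of a wave through $e$ and of a cohindrance at $e$, and by determinacy obtains winning strategies $\sigma_{M_-}$, $\sigma_{N_-}$ for Coverina in $\Gcal(M_-)$, $\Gcal(N_-)$ and $\sigma_{\top^*}$ for Packer in $\Gcal^*(\top^*)$. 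The notion of a \emph{blocking} set of promises (analysed in \autoref{5sets}, \autoref{blockstr}, \autoref{blocklem}) and the surgery of \autoref{preblock} are then used to extract a subtree $T'$ and contracted/deleted node matroids $\overline M'$, $\overline N'$, and one shows, via K\"onig's Lemma and the fact that the strategies win, that every end of $T'$ lies in $\Psi_M \triangle \Psi_N$; only then can \autoref{mainprop} be applied to a minor of $(M_{\Psi_M\cap\Omega(T')}(T',\overline M'), M_{\Psi_N\cap\Omega(T')}(T',\overline N'))$ to yield the contradiction via \autoref{minor_remark}. Your sketch omits the blocking-set machinery entirely, so as written this step is a gap rather than a reduction. (A smaller point: the payoff sets of the games are Borel not ``because membership in a wave or cohindrance is determined by circuits'' but because the end $\omega$ depends continuously on the play and the $M$-/$N$-strength conditions are $\mathbf{\Sigma^0_2}$; see \autoref{Pgame_det}.)
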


We will go via the following special case of this theorem:

\begin{prop}\label{mainprop}
 \autoref{mainthm} holds in the case that $\Psi_M$ and $\Psi_N$ partition the set of ends of $T$.
\end{prop}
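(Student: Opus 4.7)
The plan is to execute the strategy sketched in \autoref{proof_strategy}. First, by \autoref{ToMminor} the class of pairs of the form $(M_{\Psi_M}(T,\overline M), M_{\Psi_N}(T,\overline N))$, with matching finite ground sets at the nodes and with Borel sets $\Psi_M,\Psi_N$ partitioning the set of ends of $T$, is closed under contracting any subset of the ground set. Consequently, by \autoref{PCfromwave} it is enough to prove the following: for every edge $e$ of $E(\Tcal)$, either there is a wave for $(M_{\Psi_M}(T,\overline M), M_{\Psi_N}(T,\overline N))$ containing $e$, or there is a cohindrance focusing on $e$.

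For each such edge $e$, I will use the Packing game $\Gcal_P(e)$ and the Covering game $\Gcal_C(e)$ of \autoref{games}. These are designed so that a wave through $e$ exists precisely when Packer has a winning strategy in $\Gcal_P(e)$, and, dually, a cohindrance focussed on $e$ exists precisely when Coverina has a winning strategy in $\Gcal_C(e)$. Because $\Psi_M$ and $\Psi_N$ are Borel, the winning sets of both games are Borel in the natural product topology on infinite plays, so by Martin's Borel determinacy theorem each game is determined. Hence it remains to rule out the possibility that Coverina wins $\Gcal_P(e)$ while Packer wins $\Gcal_C(e)$.

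Suppose for contradiction that Coverina has a winning strategy $\sigma$ in $\Gcal_P(e)$ and Packer has a winning strategy $\tau$ in $\Gcal_C(e)$. I play them against each other by a recursive construction tracing a branch of $T$. At stage $n$ the state is a node $t_n$ of $T$, the finite pair $(\overline M(t_n), \overline N(t_n))$, and bookkeeping recording which dummy edges at $t_n$ are required (by outstanding promises in one or the other game) to be spanned or cospanned. Applying \autoref{intermediate_lemma7} to this finite pair, after appropriate contraction and deletion of the dummies already resolved, yields a dichotomy: either a local partial wave is available with which Coverina's challenge in $\Gcal_P(e)$ can be answered consistently with $\sigma$, or a local partial cohindrance is available with which Packer's challenge in $\Gcal_C(e)$ can be answered consistently with $\tau$. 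Iterating, and invoking \autoref{Infinity_Lemma} to pick a coherent choice along the tree, produces a ray $t_0 t_1 t_2 \ldots$ together with an infinite play of $\Gcal_P(e)$ belonging to $\sigma$ and an infinite play of $\Gcal_C(e)$ belonging to $\tau$. The end of $T$ defined by this ray lies in exactly one of $\Psi_M$, $\Psi_N$ since these partition the set of ends; whichever case occurs, in that game the strategy we were following turns out to lose, contradicting its being winning.

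The hard part will be the recursive construction in the last paragraph: arranging the interleaving of $\sigma$ and $\tau$ so that the invariants (which dummy edges are pending on which side, and in which of the six wave-across-a-2-separation configurations of \autoref{games} one is in) are preserved from stage to stage, and showing that \autoref{intermediate_lemma7} together with its finite-matroid strengthenings in \autoref{cases} really does deliver the required dichotomy at each node. The partition hypothesis on $\Psi_M$ and $\Psi_N$ enters precisely at the end, where it guarantees that the end of $T$ produced by the construction belongs to exactly one of the two Borel sets, so that exactly one of the two strategies is unambiguously contradicted.
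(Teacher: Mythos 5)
Your high-level plan matches the paper's: reduce via \autoref{ToMminor} and \autoref{PCfromwave} to an edge-by-edge claim, reformulate it via the Packing and Covering games of \autoref{games}, invoke Borel determinacy, and derive a contradiction from the hypothesised pair of opposing winning strategies by tracing a ray of $T$, with the partition hypothesis on $\Psi_M, \Psi_N$ entering at the very end. The gap is in what you yourself flag as ``the hard part'', and it is not a matter of routine bookkeeping.

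A simple per-node dichotomy of the kind you describe, driven by \autoref{intermediate_lemma7}, does not suffice, and \autoref{intermediate_lemma7} is in any case only a subsidiary ingredient of \autoref{lemma17}. The paper's recursion carries much more information at each step $i$: a \emph{blocking set} $B_i$ (one of the ten enumerated in \autoref{blockstr}) together with a map $\lambda_i\colon B_i \to \sigma$ assigning to each (co-)promise $P \in B_i$ a finite play in one of the two strategies, ending with the dummy edge $e(t_{i-1}t_i)$ and with last attained promise $\leq P$. The eventual contradiction turns entirely on the $M$/$N$-strong versus $M$/$N$-weak distinction in the win condition of \autoref{Pgame}: conditions 6 and 7 of the paper's recursion force challenges of each weakness type to recur along the ray, and this is what defeats one of the strategies once K\"onig's Lemma extracts an infinite play threading through the images of the $\lambda_i$. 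Arranging conditions 6 and 7 requires a case analysis over all ten blocking sets; the delicate cases $\{M_-,N_+,\top^*\}$, $\{M_+,N_-,\top^*\}$ and their duals are handled by \autoref{tactician+_lem} and \autoref{tactician-_lem}, whose finite-matroid cores are \autoref{lemma27} and \autoref{lemma17} rather than \autoref{intermediate_lemma7}. As you describe it, the K\"onig step only yields a coherent ray with no control over weakness, so the plays extracted need not be losing for either strategy, and the contradiction does not materialise.
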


In \autoref{block}, we will prove that \autoref{mainthm} follows from \autoref{mainprop}. However, the heart of our proof is the proof of \autoref{mainprop}, which is the content of  \autoref{main} and \autoref{cases}.

\section{The Packing game and the Covering game}\label{games}

The purpose of this section is to define the Packing game and the Covering game as discussed
\autoref{proof_strategy} and prove some basic facts about these games.
Throughout this section, we fix a tree $T$, together with two functions $\overline M$ and $\overline N$
such that $\Tcal^M = (T,\overline M)$ and $\Tcal^N = (T,\overline N)$ are trees of matroids of overlap 1 
such that for each $t\in V(T)$ the two matroids $\overline M(t)$ and $\overline N(t)$
have the same finite ground set $E(t)$.
We denote the common underlying set of $(T,\overline M)$ and $(T,\overline N)$ by $E(T)$. We also fix some $e\in E(T)$, and $\Psi_M,\Psi_N\se \Omega(T)$.
Let $t_0$ be the unique node of $T$ with $e\in E(t_0)$.

An \emph{arena} consists of matroids $M$ and $N$ on a common finite ground set $E$, a subset $F$ of $E$ and an element $e\in E\sm F$. The set $F$ is called the set of \emph{upper edges} and $e$ is called the \emph{lower edge} of the arena.

For $t\in V(T)-t_0$, we shall later on consider the arena
$$A(t)=(M(t),N(t),E(t), F_t,  e(tt^-)),$$ where $F_t=e``(X_t)$ and
$X_t$ is the set of edges incident with $t$ and not equal to $tt^-$. 
For $t=t_0$, we take the same definition of $A(t)$ and $X_{t_0}$ except that we take the lower edge to be $e$.

The \emph{promise set} $\Pcal$ is $\{\bot, M_-,M_+,N_-,N_+, \top\}$.
Members of $\Pcal$ are called \emph{promises}.
Given an arena $(M,N,E,\emptyset,e)$, a wave $(W,S^M,S^N)$ 
in $(M,N)$ \emph{fulfils} a promise $P$ if one of the following is true:

\begin{enumerate}
\item$P=\bot$;
\item $P=M_+$ and $e$ is $M$-spanned by $W$;
  \item $P=M_-$ and $e\in S^N$;
\item $P=N_+$ and $e$ is $N$-spanned by $W$;
  \item $P=N_-$ and $e\in S^M$;
\item $P=\top$ and $e$ is both $M$-spanned and $N$-spanned by $W$;
\end{enumerate}

Note that 6 just means that $P=\top$ and $W+e$ is a hindrance focusing on $e$.

For $P,Q\in \Pcal$, we say that $P\geq_\Pcal Q$ if and only if the following is true:
Whenever there is a wave that fulfils $P$, there is also a wave that fulfils $Q$.

For example, if there is a wave $(W,S^M,S^N)$ without $e$ that $M$-spans $e$, then $(W+e, S^M,S^N+e)$ is a wave with $e$ on the $N$-side.
So $M_+\geq_\Pcal M_-$. 
Clearly, $\geq_\Pcal$ defines a partial order on $\Pcal$.

   \begin{figure} [htpb]   
\begin{center}
   	  \includegraphics[height=2cm]{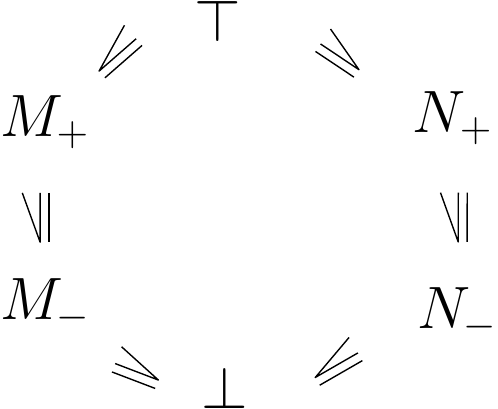}
   	  \caption{The partial order $\leq_\Pcal$.}
   	  \label{partial}
\end{center}
   \end{figure}

\begin{lem}\label{leq_P}
The partial order $\geq_\Pcal$ is the one generated from the relations
$\top\geq_\Pcal M_+, \top\geq_\Pcal N_+, M_+\geq_\Pcal M_-,N_+\geq_\Pcal N_-,M_-\geq_\Pcal \bot,N_-\geq_\Pcal \bot$, see \autoref{partial}. 
\end{lem}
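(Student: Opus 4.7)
I would split the proof into a positive half --- checking that each of the six listed relations really does hold in $\geq_\Pcal$ --- and a negative half --- producing, for every pair $(P,Q)$ not in the reflexive-transitive closure of those six relations, a concrete arena in which some wave fulfils $P$ but no wave fulfils $Q$. The positive half then gives that the generated partial order is contained in $\geq_\Pcal$, and the negative half gives the reverse inclusion.

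The positive half is essentially mechanical. The relations $\top\geq_\Pcal M_+$ and $\top\geq_\Pcal N_+$ follow because condition 6 in the definition of \emph{fulfils} literally entails conditions 2 and 4: the very same wave that witnesses $\top$ witnesses $M_+$ (resp.\ $N_+$). The relations $M_-\geq_\Pcal\bot$ and $N_-\geq_\Pcal\bot$ hold because every wave fulfils $\bot$ by condition 1. For $M_+\geq_\Pcal M_-$, if $(W,S^M,S^N)$ is a wave with $e\notin W$ and $e\in\Cl_M(W)$, then $(W+e,\,S^M,\,S^N+e)$ is again a wave: on the $M$ side $S^M$ still spans, since $e\in\Cl_M(W)\subseteq\Cl_M(S^M)$, and on the $N$ side the inclusion $e\in S^N+e$ makes spanning trivial; this new wave has $e\in S^N$ and so fulfils $M_-$. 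This is exactly the construction already displayed immediately before the statement of \autoref{leq_P}. The relation $N_+\geq_\Pcal N_-$ is symmetric, and closing the six relations under reflexivity and transitivity yields every relation depicted in \autoref{partial}.

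The negative half is where the real work lies. Up to the $M\leftrightarrow N$ symmetry of the definitions, the pairs I must rule out are $M_+$ from each of $N_+,N_-,\top$; $M_-$ from each of $M_+,N_+,N_-,\top$; and $\bot$ from every non-$\bot$ promise. The workhorse example is the two-edge arena on $E=\{e,f\}$ with $e,f$ parallel in $M$ and independent in $N$: the triple $(\{f\},\{f\},\{f\})$ is a wave fulfilling $M_+$, while the only $W\subseteq\{f\}$ satisfies $e\notin\Cl_N(W)$, so no wave $N$-spans $e$; this gives $M_+\not\geq_\Pcal N_+$ and hence $M_+\not\geq_\Pcal\top$. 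The remaining separations are obtained from small variants --- for instance, making $e$ a loop of $M$ forces $e\notin S^M$ for every wave and hence kills $N_-$, which separates $M_-$ from $N_-$, and dually for the other side; taking a matroid in which no edge outside $e$ can span $e$ in either $M$ or $N$ isolates $\bot$ from all non-$\bot$ promises. The main obstacle is that the weakest non-$\bot$ promises $M_-$ and $N_-$ are dangerously easy to fulfil by a ``degenerate'' wave such as $(\{e\},\{e\},\{e\})$, so each separating arena must be arranged so that the relevant degenerate wave is simply not available; once the catalogue of two- and three-edge examples is in place, verifying that each one separates exactly the intended pair is a routine enumeration.
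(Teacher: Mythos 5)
The positive half of your proposal is fine and matches the paper's opening sentence and the worked example preceding \autoref{leq_P}. The negative half, however, has a genuine gap rooted in the definition of a wave. A wave $(X,S^M,S^N)$ is a restriction $(M\restric_X,N\restric_X)$ together with a \emph{packing}, and a packing consists of \emph{disjoint} spanning sets $S^M$ and $S^N$. This disjointness is not re-stated in the one-line formal definition of a triple wave in \autoref{prelims}, but the rest of the paper depends on it: the rank count $|E|\geq|S^M|+|S^N|+1$ in the proof of \autoref{5sets} only makes sense if $S^M\cap S^N=\emptyset$, and the listed value of $\Acal(A)$ for the arena with $M=U_{0,1}$, $N=U_{1,1}$ excludes $N_-$, which is only possible under disjointness. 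Consequently both $(\{f\},\{f\},\{f\})$ and the ``degenerate'' triple $(\{e\},\{e\},\{e\})$ that you worry about are simply never waves; you do not need to ``arrange the arena'' to exclude them.

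This undermines your workhorse example. Take $M=U_{1,2}$ (so $e,f$ parallel) and $N=U_{2,2}$ (so $\{e,f\}$ independent). For a wave on $X=\{f\}$ one needs $S^N$ to span $f$ in $N\restric_{\{f\}}$, hence $f\in S^N$; disjointness then forces $S^M=\emptyset$, which does not span $f$ in $M\restric_{\{f\}}$. So there is \emph{no} nonempty wave, and in particular $M_+$ is not fulfilled. The arena separates nothing. Likewise, the remark that ``making $e$ a loop of $M$ forces $e\notin S^M$ for every wave'' is not true on its own: the wave $(\{e\},\{e\},\emptyset)$ has $e\in S^M$ and is legal whenever $e$ is also a loop of $N$. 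What actually rules out $e\in S^M$ in the relevant example is the combination of $e$ being a loop of $M$ and a coloop of $N$, together with disjointness (since $S^N$ must then contain $e$).

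The paper's proof is also considerably more economical than your ``one arena per non-relation'' plan. Three arenas suffice: $(U_{0,1},U_{1,1})$ and its mirror $(U_{1,1},U_{0,1})$ on a single element establish all incomparabilities between $\{M_-,M_+\}$ and $\{N_-,N_+\}$ as well as $\top>M_\pm$, $\top>N_\pm$, $M_->\bot$, $N_->\bot$; and $(U_{1,2},U_{1,2})$ on two elements gives $M_+>M_-$ and $N_+>N_-$. Once you internalise the disjointness requirement, it is worth re-deriving the fulfillable promise sets for these three arenas directly; it is a useful warm-up for \autoref{5sets}, where exactly the same computations reappear.
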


\begin{proof}
It is clear that all these relations hold in $\geq_\Pcal$. 
The arenas $(U_{0,1}, U_{1,1}, \{e\}, \emptyset, e)$
and $(U_{1,1}, U_{0,1}, \{e\}, \emptyset, e)$ show that any $P\in \{M_-,M_+\}$ is incomparable with any $Q\in \{N_-,N_+\}$. \footnote{As usual, we denote by $U_{m,n}$ the uniform matroid of rank $m$ on a set of size $n$.} These arenas also show that $\top$ is strictly larger than $M_+$ and $M_-$, and that $\bot$ is strictly smaller than
$M_-$ and $N_-$.

The arena $(U_{1,2}, U_{1,2}, \{e,f\}, \emptyset, e)$ shows that $M_+$ is strictly larger than $M_-$ and also that $N_+$ is strictly larger than $N_-$. 
This shows that $\geq_\Pcal$ is generated from the relations in the lemma.
\end{proof}

We let $\Pcal^*=\{\bot^*, M_-^*,M_+^*,N_-^*,N_+^*, \top^*\}$ be the set of dual promises.
A cowave $(W,S^M,S^N)$ fulfils $P^*$ if one of 1-6 above is true with the word `spans' replaced by `cospans'.
We let $P^*\leq_{\Pcal^*} Q^*$ if and only if $P\leq_{\Pcal} Q$.

\begin{dfn}\label{relying}
Let $(M,N,E,F,e)$ be an arena and $\varphi\colon F\to \Pcal$ a function. 
Let $M'=(M/(\varphi^{-1}\{\top, M_+\})\sm (\varphi^{-1}\{\bot, N_+\})$ and
$N'=N/(\varphi^{-1}\{\top, N_+\})\sm (\varphi^{-1}\{\bot, M_+\})$.
Then a \emph{wave relying on $\varphi$} 
is a wave $(W,S^M,S^N)$ for the pair of matroids
$(M', N')$ such that $S^M \cap \varphi^{-1}(N_+) = \emptyset$ and $S^N \cap \varphi^{-1}(M_+) = \emptyset$.

Moreover, a wave relying on $\varphi$ \emph{fulfils} a promise $P$ in the arena $(M,N,E,F,e)$
if it fulfils $P$ in the arena 
$(M',N',E\sm \varphi^{-1}\{\bot, N_+,M_+, \top\}, \emptyset,e)$.
\end{dfn}

We will now explain a construction by means of which a wave for the pair $(M_{\Psi_M}(T,\overline M), M_{\Psi_N}(T,\overline N))$ can be broken down into local waves in the arenas $A(t)$ at vertices $t$ of $T$, each relying on promises fulfilled by waves higher in the tree.

\begin{constr}\label{wave_to_promise}
Let $W=(X,S^M,S^N)$ be a wave for $(M_{\Psi_M}(T,\overline M), M_{\Psi_N}(T,\overline N))$ fulfilling some promise $P$ at $e$. For each $t\in V(T)$, 
we shall construct 
a promise $P(t)$. This will induce for each $t$ a function $\varphi_t \colon F_t \to \Pcal$ sending $e(st)$ to $P(s)$. We will also construct for each $t$ a wave $W(t)$ relying on $\varphi_t$ and fulfilling $P(t)$ in the arena $A(t)$.

First we define $P(t)$. 
We let $P(t_0)=P$. If $t\neq t_0$, the construction is as follows.
We abbreviate $E_t=E(\Tcal_{t^-\to t})$. 
Very roughly, we take for $P(t)$ the strongest promise fulfilled by the wave
$(X\cap E_t, S^M\cap E_t, S^N\cap E_t)$,
possibly modified by adding $e(tt^-)$ to one of the sides of the wave. 
More precisely, 
If $Z(t)=(X\cap E_t+e(tt^-), S^M\cap E_t, S^N\cap E_t)$
is a hindrance focusing on $e(tt^-)$, we let $P(t)=\top$.
Otherwise if $Z(t)=(X\cap E_t, S^M\cap E_t, S^N\cap E_t)$
is a wave such that $S^M\cap E_t$ spans $e(tt^-)$ in $M(\Tcal_{t^-\to t}^M)$,
we let $P(t)=M_+$.
Otherwise if $Z(t)=(X\cap E_t+e(tt^-), S^M\cap E_t, S^N\cap E_t+e(tt^-))$
is a wave, we let $P(t)=M_-$.
The cases in which we take $P(t)=N_+$ or $P(t)=N_-$ are like the cases where we take
$P(t)=M_+$ or $P(t)=M_-$ but with the roles of the matroids $M$ and $N$ reversed.
In all other cases we take $P(t)=\bot$ and $Z(t)=\emptyset$.

Finally, we define $W(t)$.
Let $Z(t)=(Y(t),S^M(t), S^N(t))$ be as defined above. 
Let $F_t(M_-)$ be the set of those $e(st) \in \varphi_t^{-1}(M_-)$ such that $e(st)$ is $N$-spanned by $S^N \cap E(\Tcal_{s \to t})$ in $M_{\Psi_N}(\Tcal_{s \to t}^N)$, and let $F_t(N_-)$ be given in the same way but with the roles of $M$ and $N$ interchanged.
We let $W(t)=(Y(t)',S^M(t)', S^N(t)')$ where
$Y(t)'= Y(t)\cap E(t)\cup F_t(M_-) \cup F_t(N_-)$ and
 $S^M(t)'=S^M(t)\cap Y(t)'$
and $S^N(t)'=S^N(t)\cap Y(t)'$.

It is now straightforward to show that $W(t)$ is a wave relying on $\varphi_t$ in the arena $A(t)$ fulfilling $P(t)$.
\end{constr}

\begin{dfn}\label{tactic}
Let $A=(M,N,E,F,e)$ be an arena and let $P\in \Pcal$. Then a \emph{tactic $K$ attaining $P$ at $e$} consists of a function $\varphi_K\colon F\to \Pcal$ and a wave $(W_K, S_K^M,S_K^N)$
relying on $\varphi_K$ and fulfilling $P$, together with
sets $C_K^M$ and $C_K^N$.
If $P\in \{\top,M_+,M_-\}$, then we require that $C_K^M\in \Ccal(M)$
and that $e\in C_K^M\se S_K^M\cup \varphi^{-1}\{\top,M_+,M_-\}$.
Similarly, if  $P\in \{\top,N_+,N_-\}$, then we require that $C_K^N\in \Ccal(N)$
and that $e\in C_K^N\se S_K^N\cup \varphi^{-1}\{\top,N_+,N_-\}$.

By $\Kcal=\Kcal(A,P)$ we denote the set of all tactics $K$ attaining $P$ at $e$.
\end{dfn}

Note that  $\varphi^{-1}(M_-)\se S_K^M$, so that we could have left out $M_-$ in the term
$S_K^M\cup \varphi^{-1}\{\top,M_+,M_-\}$ above. The same remark is true for $N_-$.
A \emph{cotactic} is defined in the same way as a tactic but with a star in the appropriate places. To simplify notation, we will sometimes call cotactics just tactics.

Note that if $W$ is a wave relying on $\varphi$ and fulfilling $P$ then we can choose some sets $C^M$ and $C^N$ such that $(\varphi, W, C^M, C^N)$ is a tactic attaining $P$.

We now return to \autoref{wave_to_promise}, which started from a wave for the pair $(M_{\Psi_M}(T,\overline M), M_{\Psi_N}(T,\overline N))$ and gave us a wave in each of the arenas $A(t)$. We now show how these waves can be augmented to tactics, in a way which encodes more precisely how certain edges $e(st)$ were spanned in $M_{\Psi_M}(\Tcal^M_{t \to s})$ and $M_{\Psi_N}(\Tcal^N_{t \to s})$.

\begin{constr}\label{wave_to_tactic}
Let $(W,S^M,S^N)$ be a wave for $(M_{\Psi_M}(T,\overline M), M_{\Psi_N}(T,\overline N))$,
fulfilling some promise $P$ at $e$.
For each vertex $t$ of $T$ we will construct a tactic $K(t)=(\varphi_t, W(t),
C_{K(t)}^M, C_{K(t)}^N)$ attaining $P(t)$, with $\varphi_t$ and $W(t)$ constructed as in \autoref{wave_to_promise}.

As in \autoref{pick_circuits}, we can pick $\Psi_M$-precircuits $(S^M_t, \overline o^M_t)$ for each $t \in V(T)$ with $P(t) \in \{\top, M_+, M_-\}$, such that if
$w\in S_u^M \cap S_v^M$, then $\overline o_u^M(w)=\overline o_v^M(w)$.
Similarly, we find $\Psi_N$-precircuits
 $(S_t^N,\overline o_t^N)$ such that if
$w\in S_u^N \cap S_v^N$, then $\overline o_u^N(w)=\overline o_v^N(w)$.
If $P(t)\in \{\top, M_+,M_-\}$, we take 
$C_{K(t)}^M=\overline o^M_{t}(t)$.
If $P(t)\in \{\top, N_+,N_-\}$, we take 
$C_{K(t)}^N=\overline o^N_{t}(t)$.
We complete the definition of $K(t)$ by assigning $C_{K(t)}^M$ an arbitrary
value if 
$P(t) \notin \{\top, M_+,M_-\}$, similarly for $C_{K(t)}^N$.
\end{constr}

We have seen how to break up any wave for $(M_{\Psi_M}(T,\overline M), M_{\Psi_N}(T,\overline N))$ into tactics at each node. In \autoref{tactic_to_wave} we will show how to do the reverse: how to build a wave from a collection of local tactics, as long as they fit well together. By `fit well together' here, we mean that collectively they form a winning strategy for a particular game, which we call the Packing game.

\begin{dfn}\label{Pgame}
Let $P_0\in \Pcal$. 
The {\em Packing game} $\Gcal(P_0)  = \Gcal(T,\overline M, \Psi_M, \overline N,  \Psi_N,P_0,e)$ is played between two players, called Packer and Coverina, as follows:

Play alternates between the players, with Packer making the first move. At any point in the game there is a {\em current node} $t_c \in V(t)$, and a {\em current edge} $e_c \in E(t_c)$, and a {\em current promise $P_c\in \Pcal$}. Initially we set $e_c=e$ and $t_c = t_0$ to be the node of $T$ with $e_c \in E(t_c)$ and $P_c=P_0$. 

For any $n$ the $(2n-1)$\textsuperscript{st} move is made by Packer: he must play a 
tactic $K_n=(\varphi_{K_n}, W_{K_n}, C_{K_n}^M, C_{K_n}^N)$ that attains the promise $P_c$ in the arena $A_n=A(t_c)$.

Then the $2n$\textsuperscript{th} move is made by Coverina: she must play an edge 
$f_n\in \varphi_{t_c}^{-1}(\Pcal - \bot)$. After she does this, 
the current edge is updated to $f_n$, the current node to the unique node $t_n$ such that $f_n=e(t_{n-1}t_n)$, and  the current challenge is updated to $\varphi_{K_n}(f_n)$.

The current challenge $f_n$ is \emph{$M$-strong} if 
$\varphi_{K_n}(f_n)$ is in $\{\top, M_+,M_-\}$ and
$f_n\in C_{K_n}^M$.
Otherwise $f_n$ is $M$-\emph{weak}.
Similarly, one defines \emph{$N$-strong} and $N$-\emph{weak}.

If play continues forever, the winner is computed from the end $\omega$ of $T$ containing $(t_n | n \in \Nbb)$ and the sequences 
$(\varphi_{K_n}(f_n)| n \in \Nbb)$ and $(f_n| n \in \Nbb)$.
An end $\omega$ \emph{ is used by $M$} if 
all but finitely many $f_n$ are $M$-strong. 
Similarly, $\omega$ \emph{ is used by $N$} if 
all but finitely many $\varphi_{K_n}(f_n)$ are in 
$\{\top, N_+,N_-\}$ and $N$-strong.

Packer wins if and only if one of the following is true:
\begin{enumerate}
 \item $\omega\in \Psi_M\cap \Psi_N$;
\item $\omega\in \Psi_M$ and $\omega$ is not used by $N$;
\item $\omega\in \Psi_N$ and $\omega$ is not used by $M$;
\item $\omega$ is used by neither $M$ nor $N$;
\end{enumerate}

The {\em Covering game} $\Gcal^*(P_0) = \Gcal^*(T,\overline M, \Psi_M, \overline N,  \Psi_N,P_0,e)$ is the game like the dual Packing game $\Gcal(T,{\overline M}^*, \Psi_M\ct, {\overline N}^*,  \Psi_N\ct,P_0^*,e)$, but with the roles of Packer and Coverina reversed. We will also use a different notation for the Covering game, putting stars on the notation for the Packing game. Thus for example the current edge is denoted $e_c^*$, and Coverina's $(2n-1)$\textsuperscript{st} move is a tactic $K_n^*$, and in the $2n$\textsuperscript{th} move Packer plays some 
$f_n^*\in\{f^*\in e``F^*\mid \varphi_{K_n^*}(f^*)\neq \bot^* \}$, and 
the current challenge $f_n^*$ is \emph{$M^*$-strong} if 
$\varphi_{K_n^*}(f_n^*)$ is in $\{\top^*, M_+^*,M_-^*\}$ and
$f_n^*\in C_{K_n^*}^{M^*}$.
\end{dfn}

Given a winning strategy $\sigma$ for the Packing game, we can recover from it a subtree $Z$ of $T$ together with a tactic at each node of $Z$. We let $Z$ be the set of nodes that appear as current nodes in some play according to $\sigma$.
For each node $t$ in $Z$, there is a unique play $s_t\in \sigma$ in which $t$ is the current node - this play arises when Packer plays according to $\sigma$ and Coverina challenges on edges on the path from $t_0$ to $t$. Let $K(t)$ be the last move of Packer in $s_t$, and $P(t)$ the promise attained by $K(t)$. 

We now show how to build a wave attaining $P$ at $e$ from this collection of tactics.

\begin{constr} \label{tactic_to_wave}
Let $\tau$ be a winning strategy for Packer in the Packing game.
By modifying the tactics $K$ played by Packer according to $\tau$, we can build a winning strategy $\sigma$ with the property that if $\varphi_K(f)\in \{M_-,N_-\}$ then $e(f)\in W_K$.
Let $Z$ and the $K(t)$ and $P(t)$ be derived from $\sigma$ as above. 
Let $W=(\bigcup_{t\in Z} W_{K(t)})\cap E$, and $S^M=(\bigcup_{t\in Z} S^M_{K(t)})\cap E$, and 
$S^N=(\bigcup_{t\in Z} S^N_{K(t)})\cap E$.

First we show that $(W,S^M,S^N)$ is a wave.
Let $x\in W\sm S^M$ be arbitrary. 
Our aim is to find some $M_{\Psi_M}$-circuit $o$ such that $x\in o\se S^M+x$.
For this we need some definitions, which are illustrated in \autoref{fig:constr3}.

   \begin{figure} [htpb]   
\begin{center}
   	  \includegraphics[height=5cm]{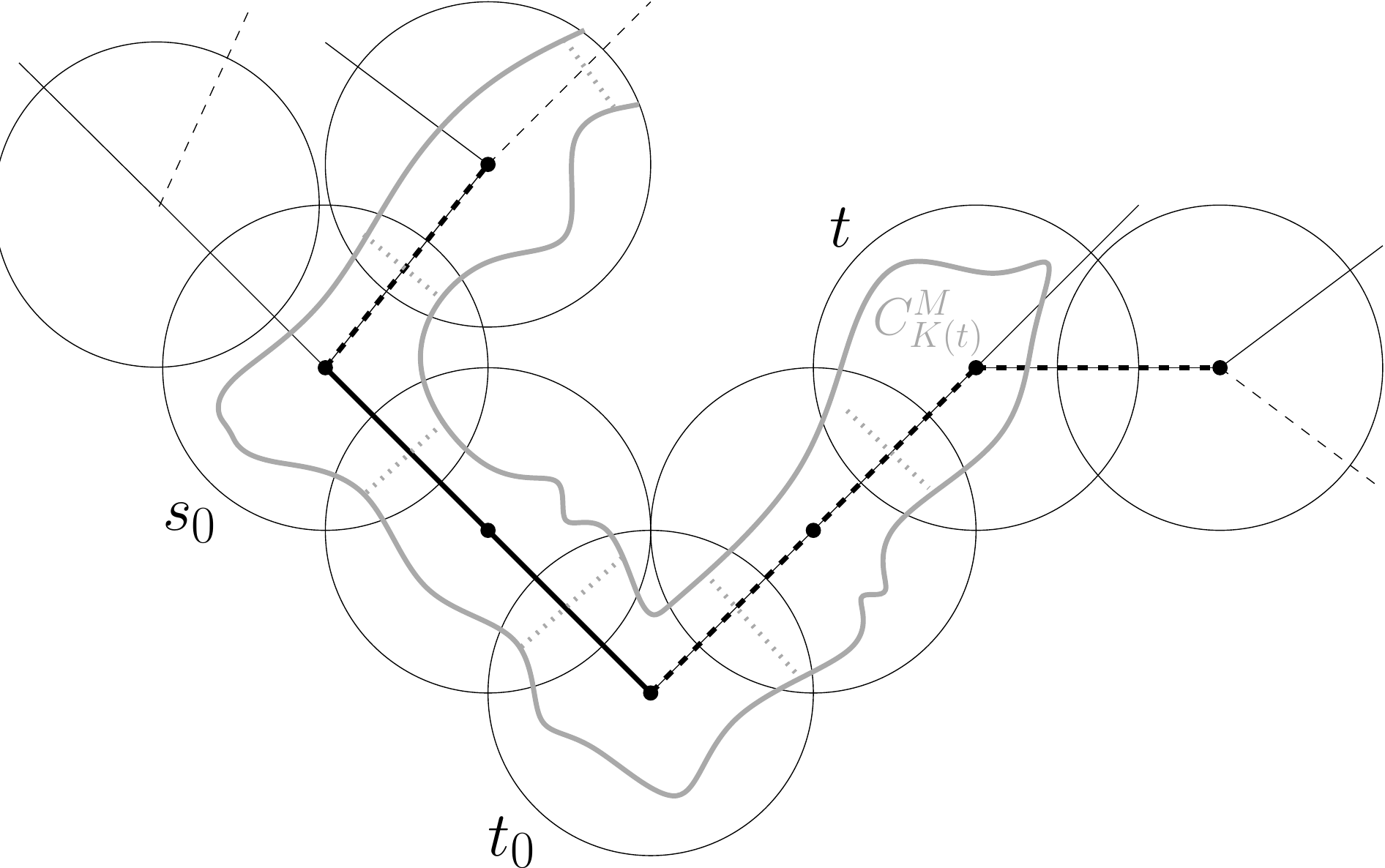}
   	  \caption{The construction of $o$. Here the highlighted path $Q$ from $s_0$ to $t_0$ has length 2 and all its 
edges are in $U_1$. The edges in $U_2$ are drawn dashed. The precircuit $(L,\overline o)$ is drawn in grey.}  	  \label{fig:constr3}
\end{center}
   \end{figure}

Let $s_0\in Z$ be such that $x\in E(M(s_0))$.
Let $Q$ be the unique path from $t_0$ to $s_0$. Note that $Q\se E(Z)$.

Let $U_1$ be the set of those edges $tu$ on $Q$ such 
that the promise fulfilled by $K(t)$
is $N_-$. 
Let $U_2\se E(Z)$ be the set of those edges $tu$ not on $Q$
such that the promise fulfilled by $K(t)$
is in $\{\top, M_+,M_-\}$.

In order to build $o$, it suffices to build a $\Psi_M$-precircuit $(L, \overline o)$ 
such that $x\in \underline{(L,\overline o)}\se S^M+x$. We shall ensure that $L\se T[U_1\cup U_2]$.

For this we first define for each $t\in T_V(U_1)\cup S_V(U_2)+s_0$ an $M(t)$-circuit $o_t\se S^M_{K(t)}\cup e''(U_1\cup U_2)+x$.
If $t=s_0$, there is such an $o_t$ with the additional property that $x\in o_t$.
Next we consider the case that $t\in T_V(U_1)$ so that there is some node $t'$ with $t't\in U_1$.
Since $\varphi_{K(t)}(e(t't))\in \{M_-,N_-\}$, the dummy edge $e(t't)$ is in $W_{K(t)}$, and thus there is such a circuit $o_t$ containing $e(t't)$.

Finally, we consider the case that $t\in S_V(U_2)$ so that there is some node $u$ with $tu\in U_2$. Here we can just take $o_t=C^M_{K(t)}$, which has the additional property that it contains $e(tu)$.

Next we define $L$.
For this we define a sequence $(L_n|n\in \Nbb)$ of sets $L_n\se V(T[U_1\cup U_2])$ with distance $n$ from $s_0$.
We start with $L_0=\{s_0\}$. 
Assume that $L_n$ is already constructed.
Let $L_{n+1}$ be the set of those nodes $w$ that have distance $n+1$ from $s_0$
such that there is some $t\in L_n$ with $e(tw)\in o_t$, where we consider $tw$ as an undirected edge.
Having defined the $L_n$, we take $L$ to be the subtree of $T[U_1\cup U_2]$ with vertex set $\bigcup_{n\in \Nbb} L_n$.
Then $(L,t\mapsto o_t)$ is a precircuit.

To see that all ends of $L$ are in $\Psi_M$, 
let $\omega$ be an end of $L$ and $R\se L$ a ray converging to $\omega$. Let $R'$ be the ray from $t_0$ which shares a tail with $R$. Let $p$ be the infinite play according to $\sigma$ obtained when Packer plays according to $\sigma$ and Coverina always challenges on edges of $R'$. Then the challenges are eventually all on edges of $U_2$, and so are $M$-strong. Thus we get an infinite play belonging to $\sigma$ which $M$-uses $\omega$.
As $\sigma$ is winning, it must be that $\omega\in \Psi_M$.
Thus $(L,t\mapsto o_t)$ is a $\Psi_M$-precircuit, giving rise to a circuit $o$, which witnesses
that $S^M$ $M_{\Psi_M}$-spans $x$ by \autoref{precirc_is_scrawl}.
Thus $S^M$ $M_{\Psi_M}$-spans $W$. Similarly one proves that $S^N$ $N_{\Psi_N}$-spans $W$.
So $(W,S^M,S^N)$ is a wave.

It remains to show that $(W,S^M,S^N)$ fulfils $P$ at $e$.
If $P\in \{\top, M_-,N_-,\bot\}$, this
follows from the fact that $(W_{K(t_0)},S^M_{K(t_0)},S^N_{K(t_0)})$ fulfils $P$ at $e$.
If $P=M_+$, then we construct an $M_{\Psi_M}$-circuit $o_e$ with $e\in o_e\se S^M+e$
in a similar way to that described above.
The case $P=N_+$ is similar. Thus $(W,S^M,S^N)$ fulfils $P$ at $e$, which completes the construction.
\end{constr}

\begin{lem}\label{Pgame_Packing}
Packer has a winning strategy $\sigma$ in the Packing game $\Gcal(P)$ if and only if there is a wave
for $(M_{\Psi_M}(T,\overline M), M_{\Psi_N}(T,\overline N))$ fulfilling $P$ at $e$. 
\end{lem}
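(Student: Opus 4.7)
The plan is to prove both directions separately, using the two constructions just introduced as the main engines: \autoref{tactic_to_wave} for the only-if direction and \autoref{wave_to_tactic} (together with \autoref{pick_circuits}) for the if direction.

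For the only-if direction, given a winning strategy $\sigma$ for Packer, I would invoke \autoref{tactic_to_wave} directly: that construction takes $\sigma$ and produces a triple $(W, S^M, S^N)$, and checks that it is a wave fulfilling $P$ at $e$. The single nontrivial point embedded in the construction is that every end of the assembled $\Psi_M$-precircuit $(L, \overline o)$ lies in $\Psi_M$. This is verified by having Coverina challenge along the ray of $L$ obtained as $R'$ in the construction: its edges eventually lie in $U_2$, so are $M$-strong, giving an infinite play whose end is $M$-used. Since $\sigma$ is winning, this end must lie in $\Psi_M$. The $N$-side is symmetric, and that the constructed wave fulfils $P$ at $e$ follows from the corresponding property of the root tactic $K(t_0)$ together with the same precircuit assembly applied to the $M_+$ or $N_+$ case when needed.

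For the if direction, given a wave $(W, S^M, S^N)$ fulfilling $P$ at $e$, I would apply \autoref{wave_to_tactic} to obtain, for each $t \in V(T)$, a tactic $K(t)$ attaining the promise $P(t)$ in the arena $A(t)$. Packer's strategy $\sigma$ is then defined by the rule: whenever the current node is $t_c$, play $K(t_c)$. A routine induction on move number shows that the current promise $P_c$ during any legal play equals $P(t_c)$, so $\sigma$ is a legal strategy that always responds to the challenged promise.

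The substantial part is to check that $\sigma$ is winning. Consider any infinite play belonging to $\sigma$, with current-node sequence $(t_n)$ converging to an end $\omega$ of $T$. I would show that if $\omega$ is used by $M$ then $\omega \in \Psi_M$; the $N$-case is symmetric, and Packer's four winning conditions then follow by casework on whether $\omega$ lies in $\Psi_M$ and/or $\Psi_N$. Suppose $\omega$ is $M$-used. Then for all sufficiently large $n$ the challenge $f_n = e(t_{n-1} t_n)$ lies in $C^M_{K(t_{n-1})} = \overline o^M_{t_{n-1}}(t_{n-1})$, which by definition of a $\Psi_M$-precircuit forces $t_n \in S^M_{t_{n-1}}$. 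The compatibility of the precircuits $(S^M_t, \overline o^M_t)$ guaranteed by \autoref{pick_circuits} then lets me identify a single fixed index $k$ such that the entire tail $(t_n)_{n \geq k}$ is contained in $S^M_{t_k}$; hence $\omega$ is an end of that precircuit, and since every end of a $\Psi_M$-precircuit lies in $\Psi_M$, we conclude $\omega \in \Psi_M$. The main obstacle is this last compatibility argument, extracting from a sequence of locally chosen precircuits one global precircuit whose end must be $\omega$; everything else is bookkeeping on top of the constructions already in hand.
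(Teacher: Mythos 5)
Your proposal is correct and takes essentially the same approach as the paper: the paper's proof also consists of invoking \autoref{wave_to_tactic} to build Packer's strategy from a wave and \autoref{tactic_to_wave} to recover a wave from a winning strategy, merely declaring the winning-strategy verification ``straightforward to check.'' Your chaining argument via the compatibility condition of \autoref{pick_circuits} (showing that if $\omega$ is $M$-used then a tail of $(t_n)$ lies in a single $\Psi_M$-precircuit $S^M_{t_k}$, whence $\omega \in \Psi_M$) is a correct and welcome unpacking of that elided step.
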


\begin{proof}
First assume that there is a wave $(W,S^M,S^N)$
for $(M_{\Psi_M}(T,\overline M), M_{\Psi_N}(T,\overline N))$ fulfilling $P$ at $e$. 
Then Packer has the following winning strategy: at the node $v$ he plays the tactic $K(v)$ defined in \autoref{wave_to_tactic}. If Coverina challenges at some dummy edge $f$, then the new challenge is 
$\varphi_{K(v)}(f)=P_{f}$. It is straightforward to check that this is a winning strategy.

Conversely, if Packer has a winning strategy $\sigma$ then \autoref{tactic_to_wave} gives us a wave fulfilling $P$ at $e$.
\end{proof}

By duality, we get the following:

\begin{lem}\label{Cgame_Covering}
Coverina has a winning strategy $\sigma^*$ in the Covering game $\Gcal^*(P^*)$ if and only if there is a cowave
for $(M_{\Psi_M}(T,\overline M), M_{\Psi_N}(T,\overline N)$ fulfilling $P^*$ at $e$. 
\qed
\end{lem}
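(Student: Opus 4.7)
The statement is asserted as a consequence of \autoref{Pgame_Packing} by duality, so the plan is to unpack the three dualities at play and chain them together: the matroid duality between $M_\Psi(T,\overline M)$ and $M_{\Psi\ct}(T,\overline M{}^*)$, the wave/cowave duality, and the Packer/Coverina duality built into the definition of the Covering game.

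First I would record the matroid-theoretic identity $M_\Psi(T,\overline M)^* = M_{\Psi\ct}(T,\overline M{}^*)$ for Borel $\Psi$. This is the standard duality for matroids built from trees of matroids of overlap $1$ (established in \cite{BC:determinacy}): the cocircuits of $M_\Psi(T,\overline M)$ are precisely the $\Psi\ct$-circuits of $(T,\overline M{}^*)$, and note that $\Psi\ct$ is Borel whenever $\Psi$ is, so \autoref{psimat} applies to the dualised tree. Next, I would verify from the definitions in \autoref{relying} and the paragraph introducing $\Pcal^*$ that a triple $(Y,T^M,T^N)$ is a cowave for $(M,N)$ fulfilling $P^*$ at $e$ if and only if it is a wave for $(M^*,N^*)$ fulfilling $P$ at $e$; this is immediate from ``cospans'' being the $M^*$-span and from the definition $P^*\leq_{\Pcal^*}Q^*\iff P\leq_\Pcal Q$.

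Then I would apply \autoref{Pgame_Packing} to the \emph{dualised} data, namely to the Packing game $\Gcal(T,\overline M{}^*,\Psi_M\ct,\overline N{}^*,\Psi_N\ct,P^*,e)$. That lemma yields: Packer has a winning strategy in this dualised Packing game if and only if there is a wave for the pair $(M_{\Psi_M\ct}(T,\overline M{}^*), M_{\Psi_N\ct}(T,\overline N{}^*))$ fulfilling $P^*$ at $e$. By the matroid-duality identity of the previous paragraph this pair equals $(M_{\Psi_M}(T,\overline M)^*, M_{\Psi_N}(T,\overline N)^*)$, so by the wave/cowave duality such waves are exactly the cowaves for $(M_{\Psi_M}(T,\overline M), M_{\Psi_N}(T,\overline N))$ fulfilling $P^*$ at $e$.

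Finally, I would invoke the definition of the Covering game: $\Gcal^*(P^*)$ is the dualised Packing game with the two players' roles interchanged, so Coverina wins $\Gcal^*(P^*)$ if and only if Packer wins $\Gcal(T,\overline M{}^*,\Psi_M\ct,\overline N{}^*,\Psi_N\ct,P^*,e)$. Chaining the three equivalences yields the claim, with no further combinatorial work needed. The only subtle step is checking that the winning condition in the Covering game, after the role swap, really is exactly the winning condition of the dualised Packing game for the opposing player; this is forced by the definition which already incorporated the switch, so it is bookkeeping rather than a genuine obstacle. Thus the whole argument is definitional once \autoref{Pgame_Packing} and the matroid duality of \cite{BC:determinacy} are in hand.
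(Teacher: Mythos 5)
Your proposal is correct and takes exactly the route the paper intends: the paper's ``proof'' is simply the phrase ``By duality'' followed by the \texttt{qed} mark, and your paragraphs spell out precisely which three dualities (matroid duality $M_\Psi(T,\overline M)^* = M_{\Psi\ct}(T,\overline M{}^*)$, wave/cowave duality, and the role-swap baked into the definition of the Covering game) are being chained.
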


\begin{lem}\label{Pgame_det}
If $\Psi_M$ and $\Psi_N$ are Borel, then the Packing game is determined.
\end{lem}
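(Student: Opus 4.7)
The plan is to apply Martin's Borel determinacy theorem \cite{Martin_borel}. I would first observe that the game tree is locally finite: at any position with current node $t_c$, the arena $A(t_c)$ is finite, so only finitely many tactics $K$ are available to Packer, and Coverina's subsequent response ranges over the finite set $\varphi_K^{-1}(\Pcal\setminus\{\bot\})$. Extending any finite play in which a player cannot move by declaring that player to have lost, I would regard the play space as a closed subset of a countable product of finite sets equipped with the product topology; this is a compact Polish space, and the game falls within the scope of Martin's theorem. It therefore remains to verify that the set of Packer-wins is Borel.

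A play $p$ determines a sequence $(t_n(p))_{n\in \Nbb}$ of current nodes tracing out a ray from $t_0$ outward through $T$, since Coverina is barred from playing the dummy edge leading back towards $t_n^-$. The associated end $\omega(p)\in \Omega(T)$ depends continuously on $p$, where $\Omega(T)$ carries its usual topology whose basic open sets are parametrised by finite initial segments of rays from $t_0$. Since $\Psi_M$ and $\Psi_N$ are Borel in $\Omega(T)$, the preimages $\{p : \omega(p)\in \Psi_M\}$ and $\{p : \omega(p)\in \Psi_N\}$ are Borel in the play space.

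Next I would observe that the assertion ``$\omega$ is used by $M$'' unpacks as
\[
\bigcup_{n_0\in \Nbb}\bigcap_{n\geq n_0}\{p : f_n(p) \text{ is $M$-strong}\}.
\]
Each set inside the intersection is clopen, because ``$f_n$ is $M$-strong'' depends only on the finite data $(K_n, f_n)$ read off a fixed prefix of $p$; hence ``used by $M$'' is $\mathbf{\Sigma^0_2}$, and similarly for ``used by $N$''. Combining these observations via the finite Boolean operations spelled out in the four winning clauses of \autoref{Pgame}, the set of plays won by Packer is Borel, and Martin's theorem yields a winning strategy for one of the two players.

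The main obstacle is really just bookkeeping: one must check that the natural map from plays to the end space is genuinely continuous (rather than merely Borel), that tactics and edges can be encoded as elements of a fixed countable alphabet so that plays live in a Baire-space-like setting, and that the various edge cases (finite plays, or walks that fail to escape to infinity) are handled by declaring appropriate trivial outcomes without destroying Borel measurability. None of this is hard once the locally finite structure of the game tree is in hand.
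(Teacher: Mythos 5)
Your proposal is correct and takes essentially the same approach as the paper: a continuous map from plays to ends makes the preimages of $\Psi_M$ and $\Psi_N$ Borel, the ``used by $M$'' and ``used by $N$'' conditions are $\mathbf{\Sigma^0_2}$ because $M$-strength of the $n$\textsuperscript{th} challenge is determined by a finite prefix of the play, the winning set is then a finite Boolean combination of these, and Borel determinacy concludes. You are a bit more explicit about the local finiteness of the game tree and the treatment of finite plays, but the underlying argument is identical to the one in the paper.
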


\begin{proof}
Let $\Xcal$ be the set of infinite plays in the Packing game.
We endow $\Omega(T)$ with the topology inherited from the Freudenthal compactification of $T$.
For each infinite play $P$, the moves of the second player form a ray of $T$.
Let $\omega_P$ be the end this ray belongs to.
Then the function $f$ mapping $P$ to $\omega_P$ is continuous.
Thus both $f^{-1}(\Psi_M)$ and $f^{-1}(\Psi_N)$ are Borel sets.

By $S_{\{\top, M_+,M_-\}}$ we denote the set of those infinite plays whose challenges are eventually in $\{\top, M_+,M_-\}$ and $M$-strong. The set $S_{\{\top, M_+,M_-\}}$ is a countable union of closed sets and thus Borel.
Similarly, by $S_{\{\top, N_+,N_-\}}$ we denote the Borel set of those infinite plays whose challenges are eventually in $\{\top, N_+,N_-\}$ and $N$-strong. 

Now we are in a position to write the set $W$ of infinite plays in which Packer wins as a Borel set:
\[
 W=[f^{-1}(\Psi_M)\cap f^{-1}(\Psi_N)]\cup
[f^{-1}(\Psi_M)\sm S_{\{\top, N_+,N_-\}}]\cup
[f^{-1}(\Psi_N)\sm S_{\{\top, M_+,M_-\}}]\cup
\]
\[
[S_{\{\top, M_+,M_-\}} \cup S_{\{\top, N_+,N_-\}}]\ct
\]
\end{proof}

\section{Blocking sets}\label{block}

The purpose of this section is to prove that \autoref{mainprop} implies \autoref{mainthm}.
First, we turn our attention to play in an arena without upper edges: we analyse which collections of promises (or co-promises) can be fulfilled by waves (or cowaves) in such an arena. For any arena $A = (M, N, E, \emptyset, e)$, we let $\Acal(A)$ be the set of promises or co-promises fulfillable in $A$.

\begin{lem}\label{5sets}
There are precisely 5 possible values for $\Acal(A)$, as follows:
\begin{enumerate}
\item  $\Pcal + \bot^*$
\item $\Pcal^* + \bot$
\item $\{\bot, M_-, M_+, \bot^*, N_-^*, N_+^*\}$
\item $\{\bot, N_-, N_+, \bot^*, M_-^*, M_+^*\}$
\item $\{\bot, M_-, N_-, \bot^*, M_-^*, N_-^*\}$.
\end{enumerate}
\end{lem}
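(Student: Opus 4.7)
My plan is to characterize $\Acal(A)$ by a case analysis on which of the ``top-level'' promises are fulfillable, using the finite-matroid Packing/Covering theorem and the exchange-chain machinery of \autoref{prelims} as the main tools. First I would record basic properties: $\{\bot,\bot^*\}\se \Acal(A)$ always (the empty wave fulfils $\bot$ and the empty cowave fulfils $\bot^*$), and $\Acal(A)$ is downward closed under $\geq_\Pcal$ and $\geq_{\Pcal^*}$ directly from their definitions. This restricts $\Acal(A)$ to a downset in $\Pcal\cup\Pcal^*$, so the task reduces to pinning down which of the top-level promises $\top,M_+,N_+,\top^*,M_+^*,N_+^*$ lie in $\Acal(A)$.

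Next I would establish two \emph{augmentation} lemmas: $M_+,N_+\in\Acal(A)$ implies $\top\in\Acal(A)$, and dually $M_+^*,N_+^*\in\Acal(A)$ implies $\top^*\in\Acal(A)$. For the primal version, pick waves $W_M,W_N$ witnessing $M_+,N_+$; their join $W_M\circ W_N$ (provided by \autoref{joinwaves}) is a wave avoiding $e$ (since $e$ avoids both $W_M$ and $W_N$) while lying in both the $M$-closure (through $W_M$) and the $N$-closure (through $W_N$), so it fulfils $\top$. Together with downward closure, this says that cases~1 and~2 of the lemma arise exactly when both wave-pluses or both cowave-pluses lie in $\Acal(A)$.

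The heart of the proof is a family of \emph{cross-exclusion} results: for each $X\in\{M,N\}$, $X_+$ and $X_+^*$ cannot both lie in $\Acal(A)$, and $\top$ is incompatible with every starred promise above $\bot^*$ (and dually). I would argue via circuit-cocircuit orthogonality combined with \autoref{chain3} and \autoref{chain4}: a wave witnessing $X_+$ contains an $X$-circuit $C$ through $e$, a cowave witnessing $X_+^*$ contains an $X$-cocircuit $D$ through $e$, and orthogonality forces an overlap element $f\in (C\cap D)-e$. Running an exchange chain through $f$ in the style of \autoref{chain3} produces a focused hindrance or cohindrance; combined with the remaining witness, this upgrades to both $\top$ and $\top^*$ appearing simultaneously in $\Acal(A)$, which contradicts Packing/Covering for finite matroids via \autoref{chain4}.

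With these tools the classification becomes routine: the cross-exclusions constrain each of $M,N$ to exactly one of three states (wave-plus-active, cowave-plus-active, or neither), the augmentations collapse the two diagonal states into cases~1 and~2, the mixed states give cases~3 and~4, and the ``neither'' state gives case~5. Downward closure and augmentation settle which minus-promises appear, except in case~5, where the four minuses $M_-,N_-,M_-^*,N_-^*$ are witnessed directly by applying finite Packing/Covering to $(M,N)$ and choosing spanning and cospanning sets so that $e$ lies on the appropriate side. The main obstacle I expect is the cross-exclusion $X_+$ vs.\ $X_+^*$: sharing a single edge $f\in C\cap D$ between a wave and a cowave is not a priori contradictory in $(M,N)$, and converting this overlap into an inconsistency demands a careful exchange-chain argument that propagates the overlap back to a simultaneous hindrance and cohindrance at $e$, which can then be ruled out by finite Packing/Covering applied to a suitable minor.
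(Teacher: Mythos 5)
Your proposal correctly identifies that $\Acal(A)$ is a downset containing $\{\bot,\bot^*\}$, and your augmentation lemma ($M_+,N_+ \Rightarrow \top$) and its dual are sound via \autoref{joinwaves}. However, there is a genuine gap in the classification step, and it is not a minor one.

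Your cross-exclusions say: at most one of $X_+,X_+^*$ is in $\Acal(A)$ for each $X$, and $\top$ (resp.\ $\top^*$) is incompatible with every starred (resp.\ unstarred) promise above $\bot^*$ (resp.\ $\bot$). These are all negative statements. The paper's proof rests on a matched pair of statements about the sets $\{M_+,M_-^*\}$, $\{M_-,M_+^*\}$, $\{N_+,N_-^*\}$, $\{N_-,N_+^*\}$: \emph{exactly one} element of each is in $\Acal(A)$. The negative half is the rank-count argument (if both $M_+$ and $M_-^*$ were fulfillable one would get $2|E| \geq 2|E|+1$), which is quite different from the circuit--cocircuit/exchange-chain argument you sketch. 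The positive half --- at least one of $M_+$ and $M_-^*$ is always fulfillable, obtained by applying finite Packing/Covering to $(M/e,N\backslash e)$ --- is entirely absent from your proposal, and it is load-bearing. Without it, your invariants do not pin down $\Acal(A)$: the downset $\{\bot,M_-,M_+,N_-,\bot^*,M_-^*,N_-^*\}$ satisfies downward closure, contains $\bot,\bot^*$, vacuously satisfies both augmentations (since $N_+\notin$ and $N_+^*\notin$), satisfies your two $X_+$/$X_+^*$ exclusions, and vacuously satisfies the $\top$ and $\top^*$ incompatibilities, yet it is not one of the five listed values. More broadly, your ``three states per matroid'' picture gives nine combinations, and the four states where one side is ``neither'' but the other is not are never addressed; ruling them out is precisely where the ``at least one of $M_+, M_-^*$'' argument is needed (combined with the stronger augmentation $M_+,N_- \Rightarrow \top$ that the paper actually uses).

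A secondary concern: even the cross-exclusion $X_+$ versus $X_+^*$ is not established by what you write. After extracting $f \in (C \cap D) - e$ from orthogonality, you appeal to ``an exchange chain in the style of \autoref{chain3}'' to produce a hindrance or cohindrance, and then to \autoref{chain4} to derive a contradiction from having both $\top$ and $\top^*$. But \autoref{chain4} is a positive dichotomy (``wave or cohindrance exists''), not an exclusion; it does not by itself forbid both. The paper avoids this by never comparing $\top$ against $\top^*$ directly, working instead with the $\{X_\pm, X_\mp^*\}$ pairs where a clean rank-count closes the negative half. Finally, you do not address the ``all five values are achievable'' half of the statement, which the paper dispatches with small explicit arenas ($U_{0,1},U_{1,1},U_{1,2}$).
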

\begin{proof}
First we show that all 5 values are possible. We can get all but the last value from arenas with $E = \{e\}$: for the first value we take both $M$ and $N$ to be $U_{0,1}$, for the second we take both to be $U_{1,1}$, and for the third and fourth we take one to be $U_{0,1}$ and the other $U_{1,1}$. For the final value, we may take $M$ and $N$ to both be $U_{1,2}$.

Next we show that no other value is possible. We begin by showing that $\Acal(A)$ cannot contain both $M_+$ and $M_-^*$. Suppose for a contradiction that it did, and let $(W, S^M, S^N)$ be a wave fulfilling $M_+$ and $(X, T^M, T^N)$ a co-wave fulfilling $M_-^*$. By removing edges outside $W+e$ and/or contracting edges outside $X$ if necessary, we may assume without loss of generality that $W+e = X = E$. Thus $|E| \geq |S^M| + |S^N| + 1\geq r(M) + r(N \backslash e) + 1$. But also since $T^N$ is co-spanning in $N$, $T^N - e$ is co-spanning in $N \backslash e$, and so $|E| \geq |T^M| + |T^N - e| + 1 \geq r(M^*) + r((N\backslash e)^*) + 1$, so $2|E| \geq r(M) + r(M^*) + r(N \backslash e) + r((N \backslash e)^*) + 2 = 2|E| + 1$, which is the desired contradiction. 

We continue by showing that at least one of $M_+$ and $M_-^*$ must be contained in $\Acal(A)$. We begin by taking a maximal wave $(W, S^M, S^N)$ for $(M\sm e,N\sm e)$. If $e \in \Sp_M(W)$ then $W$ fulfills $M_+$. Otherwise, by contracting $W$ if necessary, we may assume that every nonempty wave contains $e$. Now we apply the Packing/Covering Theorem for finite matroids to $(M/ e, N\backslash e)$, obtaining a partition $E - e = P \dot \cup Q$ with a packing of $P$ and a covering of $Q$. Then the packing of $P$ isn't a hindrance since if it were then by \autoref{chain3} there would be a nontrivial wave for the pair $(M, N)$ not containing $e$. So it is also a covering, so that there is a cowave $(E-e, T^M, T^N)$. Now if $e$ is an $M$-loop then the empty wave fulfills $M_+$ and if not then $e$ is in the $M^*$-span of $T^M$ and so $(E, T^M, T^N + e)$ witnesses $M_-^*$.

So far we have shown that $\Acal(A)$ must contain precisely one of $M_+$ and $M_-^*$. Similarly, it must contain precisely one element of each of the sets $\{M_-, M_+^*\}$, $\{N_+, N_-^*\}$ or $\{N_-, N_+^*\}$. So if it is not given by the fifth option above, it must contain one of $M_+$, $N_+$, $M_+^*$ and $N_+^*$: without loss of generality let us say it contains $M_+$. If it also contains $N_+^*$ then, since it must be down-closed by the definition of $\leq_\Pcal$, it can only be the third option above. But if not then it must contain $N_-$. Now let $W$ be a wave fulfilling $M_+$ and let $X$ be a wave fulfilling $N_-$. Then $W \circ X$ is a wave fulfilling $\top$ and so, by down-closure again and the fact that $\bot^*$ is witnessed by the empty cowave, $\Acal(A)$ must be the first option above.
\end{proof}

\begin{rem}\label{minor_remark}
The only place where the finiteness of $E$ was used in this argument was in the application of the Packing/Covering Theorem to minors of $(M, N)$. So we get the same result without assuming finiteness of $E$, on the assumption that all minors of $(M, N)$ satisfy the Packing/Covering conjecture.
\end{rem}

\begin{dfn}
A {\em challenger} to a promise $P$ in an arena $A = (M, N, E, F, e)$ is a function $\gamma$ assigning an element of $F$ to each tactic $K$ in $\Kcal(A, P)$. For any tactic $K$ we denote by $\overline \gamma (K)$ the promise $\varphi_K(\gamma(K))$. For any $f \in F$, we denote by $\gov[f]$ the up-closure of the set $\{\gov(K) | K \in \Kcal(A, P) \text{ and } \gamma(K) = f\}$.
\end{dfn}

Challengers are important in the analysis of winning strategies in the Packing and Covering games. Let $\sigma$ be a winning strategy for Coverina in the Packing game $\Gcal(P)$, and let $s \in \sigma$ be a finite play of length $2n$.  Let $\overline s$ be $P$ if $s$ has length 0 and $\varphi_{s_{2n-1}}(s_{2n})$ otherwise (so after the play $s$ we have $P_c = \overline s$). Since $\sigma$ is winning, we may define a challenger $\gamma^{\sigma}_s$ to $\overline s$ in $A_{n+1}=A(t_c)$ by sending each tactic $K$ attaining $\overline s$ in $A_{n+1}$ to the unique $f \in e``F$ such that $s \cdot K \cdot f \in \sigma$. We omit the superscript $\sigma$ when it is clear from the context which strategy we are working with.

\begin{dfn}
A subset of $\Pcal \cup \Pcal^*$ is {\em blocking} if it meets all the possible values of $\Acal(A)$ listed in \autoref{5sets}.
\end{dfn}

That is, a set of promises and co-promises is blocking 
if for any arena with $F=\emptyset$ there exists a promise in the blocking set attainable in this arena.

\begin{lem}\label{preblock}
Let $A = (M, N, E, F, e)$ be an arena and $\rho$ a function assigning to each $f \in F$ a subset of $\Pcal \cup \Pcal^*$. Let $F'$ be the set of $f \in F$ at which $\rho(f)$ is blocking. Then there is an arena $A' = (M', N', E', F', e)$ such that for each $P \in \Pcal \cup \Pcal^*$ and tactic $K'$ attaining $P$ at $e$ in $A'$ there is a tactic $K$ attaining $P$ at $e$ in $A$ for which the function $\varphi_K$ extends $\varphi_{K'}$ and $C^M_K \cap  F' = C^M_{K'} \cap  F'$ and $C^N_K \cap F' = C^N_{K'} \cap  F'$ and for each $f \in F \setminus F'$ we have $\varphi_K(f) \not \in \rho(f)$.  
\end{lem}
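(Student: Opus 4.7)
My plan is to proceed by induction on $|F \setminus F'|$. The base case $F = F'$ is immediate: set $A' = A$ and lift each tactic to itself.

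For the inductive step, pick any $f \in F \setminus F'$. Since $\rho(f)$ is not blocking, \autoref{5sets} provides one of its five sets $\Acal$ with $\Acal \cap \rho(f) = \emptyset$. I shall construct an intermediate arena $A'' = (M'', N'', E'', F - f, e)$ together with a lifting map sending each tactic $K''$ attaining $P$ at $e$ in $A''$ to a tactic $K$ attaining $P$ at $e$ in $A$ such that $\varphi_K$ extends $\varphi_{K''}$, $\varphi_K(f) \in \Acal$ (hence $\varphi_K(f) \notin \rho(f)$), and $C^M_K \cap (F - f) = C^M_{K''} \cap (F - f)$, $C^N_K \cap (F - f) = C^N_{K''} \cap (F - f)$. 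The construction then follows by applying the induction hypothesis to $A''$ with $\rho$ restricted to $F - f$, producing an arena $A'$ with upper-edge set $F'$; the two lifts compose to give the required tactic in $A$.

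The choice of minor at $f$ in $A''$ is dictated by $\Acal$, in effect letting $\Acal$ tell us what promise we are free to assign at $f$. If $\Acal = \Pcal + \bot^*$ I take $(M'', N'') = (M/f, N/f)$ and $E'' = E - f$, lifting promise tactics with $\varphi_K(f) = \top$ and co-promise tactics with $\varphi_K(f) = \bot^*$. The dual case $\Acal = \Pcal^* + \bot$ uses double deletion, assigning $\bot$ or $\top^*$. For the mixed case $\Acal = \{\bot, M_-, M_+, \bot^*, N_-^*, N_+^*\}$ I take $(M'', N'') = (M/f, N \sm f)$ and assign $\varphi_K(f) = M_+$ on promises and $N_+^*$ on co-promises; the symmetric case $\Acal = \{\bot, N_-, N_+, \bot^*, M_-^*, M_+^*\}$ is handled dually. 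Finally, when $\Acal = \{\bot, M_-, N_-, \bot^*, M_-^*, N_-^*\}$ I leave the matroids unchanged, setting $\varphi_K(f) = M_-$ if $f \in S^M_{K''}$ and $\varphi_K(f) = N_-$ otherwise, with the dual choice on co-tactics. In every case, the circuit $C^M_K$ is obtained from $C^M_{K''}$ by adjoining $f$ precisely when $f$ was contracted in $M$, and similarly for $C^N_K$, so that these circuits remain valid in $M$ and $N$ and are still contained in $S^M_K \cup \varphi_K^{-1}\{\top, M_+, M_-\}$ and $S^N_K \cup \varphi_K^{-1}\{\top, N_+, N_-\}$ respectively.

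The main obstacle will be the bookkeeping in verifying, in each of the five cases, that the extension of $\varphi_{K''}$ at $f$ together with the augmented circuits really does produce a valid tactic in the sense of \autoref{tactic}, and that the wave (resp.\ cowave) underlying $K''$, viewed inside $A$, genuinely relies on $\varphi_K$ per \autoref{relying}. This is an elementary but somewhat fiddly check that the minor operation chosen for $f$ is compatible with the promise we wish to attach to it. The last case is mildly subtle because the promise assigned to $f$ depends on which of $S^M_{K''}$ or $S^N_{K''}$ contains $f$; however both $M_-$ and $N_-$ lie in $\Acal$, so either choice avoids $\rho(f)$.
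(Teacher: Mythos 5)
Your proposal is correct and mirrors the paper's proof: the paper performs all the reductions simultaneously, classifying each $f \in F \setminus F'$ by which of the five sets from \autoref{5sets} the set $\rho(f)$ fails to meet and applying the corresponding minor operations in one pass, whereas you obtain the same arena by peeling off one such $f$ per inductive step; the case analysis, the choice of minor at $f$, and the promise assigned there agree with the paper in every case. One small slip: when $f$ is contracted in $M$, a circuit of $M/f$ need not become a circuit of $M$ by adjoining $f$ (it may already be a circuit of $M$ avoiding $f$), so $f$ should be adjoined to $C^M_{K''}$ only when necessary to recover a circuit of $M$, not ``precisely when $f$ was contracted.''
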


\begin{rem}\label{rem_challenger}
As a consequence, for any promise $P$ and any challenger $\gamma$ to $P$ in $A$ such that $\gamma[f] \subseteq \rho(f)$ for each $f \in F$, there is a challenger $\gamma'$  to $P$ in $A'$ such that for each tactic $K$ attaining $P$ at $e$ in $A'$ there is a tactic $K$ attaining $P$ at $e$ in $A$ for which $\gamma_P(K) = \gamma_P'(K')$, the function $\varphi_K$ extends $\varphi_{K'}$, $C^M_K \cap \sm F' = C^M_{K'} \cap F'$ and $C^N_K \cap F' = C^N_{K'} \cap F'$.  
\end{rem}

\begin{proof}[Proof of \autoref{preblock}]
For each $f \in F \setminus F'$, choose one of the 5 sets from \autoref{5sets} which $\rho(f)$ fails to meet, and let $F_i$ be the set of those $f \in F \setminus F'$ for which the $i$\textsuperscript{th} element of the list was chosen. Let $E' = E \setminus (F_1 \cup F_2 \cup F_3 \cup F_4)$, $M' = M /(F_1 \cup F_3) \backslash (F_2 \cup F_4)$ and $N' =  N/(F_1 \cup F_4) \backslash (F_2 \cup F_3)$. As in the statement, let $A' = (M', N', E', F', e)$. 

Let $K'$ be a tactic attaining some promise $P$ at $e$ in $A'$. We define the corresponding tactic $K$ in $A$ as follows: let $W_K = W'_{K'}$, and let $\varphi_{K_n}$ be obtained as the extension of $\varphi_{K_n}'$ to $F$ taking the value $\top$ on $F_1$, $\bot$ on $F_2$, $M_+$ on 
$F_3$, $N_+$ on $F_4$, $M_-$ on $F_5 \cap S^M$ and $N_-$ on $F_5 \setminus S^M$. Let $C^M_K$ be an extension of $C^M_{K'}$ whose new elements all come from $F_1 \cup F_3$ and $C^N_K$ be an extension of $C^N_{K'}$ whose new elements all come from $F_1 \cup F_4$. 
\end{proof}

\begin{lem}\label{blocklem}
Let $B$ be a blocking set and $A = (M, N, E, F, e)$ an arena. For each $P \in B$, let $\gamma_P$ be a challenger to $P$ in $A$. Then there is some $f \in F$ such that $\bigcup_{P \in B} \gov_P[f]$ is blocking.
\end{lem}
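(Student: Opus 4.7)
The plan is to prove this by contradiction using \autoref{preblock}.

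Suppose, for contradiction, that for every $f \in F$ the set $\bigcup_{P \in B} \gov_P[f]$ is \emph{not} blocking. Define $\rho\colon F \to \mathcal{P}(\Pcal \cup \Pcal^*)$ by $\rho(f) = \bigcup_{P \in B} \gov_P[f]$. Under our contradictory assumption, the set $F'$ of \autoref{preblock} (the set of $f$ at which $\rho(f)$ is blocking) is empty. So \autoref{preblock} produces an arena $A' = (M', N', E', \emptyset, e)$ with no upper edges, such that every tactic $K'$ attaining some promise $P$ in $A'$ lifts to a tactic $K$ attaining $P$ in $A$ whose function $\varphi_K\colon F \to \Pcal$ extends $\varphi_{K'}$ (trivially, since $\varphi_{K'}$ has empty domain) and, crucially, satisfies $\varphi_K(f) \notin \rho(f)$ for every $f \in F \setminus F' = F$.

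Now I use the assumption that $B$ is blocking. Since $A'$ has no upper edges, $\Acal(A')$ equals one of the five sets listed in \autoref{5sets}, and by definition of ``blocking'' there is some $P \in B \cap \Acal(A')$. Pick such a $P$ and a witnessing tactic $K'$ attaining $P$ at $e$ in $A'$. Applying the previous paragraph to $K'$ gives a tactic $K \in \Kcal(A, P)$ such that $\varphi_K(f) \notin \rho(f)$ for every $f \in F$.

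But $\gamma_P$ is a challenger to $P$ in $A$, so $\gamma_P(K) \in F$; setting $f_0 = \gamma_P(K)$, by definition of $\gov$ and $\gov[\,\cdot\,]$ we have
\[
\varphi_K(f_0) = \varphi_K(\gamma_P(K)) = \gov_P(K) \in \gov_P[f_0] \subseteq \rho(f_0),
\]
contradicting $\varphi_K(f_0) \notin \rho(f_0)$. This contradiction completes the proof.

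The main point (and really the only point requiring care) is recognizing that \autoref{preblock} is set up precisely to manufacture, out of a tactic in a stripped-down arena, a tactic in the original arena whose promise function \emph{avoids} any prescribed non-blocking $\rho(f)$ at each upper edge. Once that is noticed, the proof is simply the contrapositive statement packaged with the definition of a blocking set applied to the reduced arena $A'$. No delicate combinatorial construction is needed; the hard work has already been done in \autoref{5sets} and \autoref{preblock}.
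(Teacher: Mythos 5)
Your proof is correct and essentially the same as the paper's, which is a one-liner applying \autoref{preblock} with the same $\rho$; you simply unpack what that application yields, which is fine and arguably clearer.
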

\begin{proof}
Suppose for a contradiction that there is no such $f$. Then we apply \autoref{preblock}
with $\rho \colon f \mapsto \bigcup_{P \in B} \gov_P[f]$ and get an arena with no upper edges 
in which none of the promises in $B$ can be attained by any tactic, contradicting the fact that $B$ is blocking.
\end{proof}

This useful property makes it worth looking at blocking sets in detail, and we will now pause to analyse their structure more carefully. However, we shall only consider up-closed blocking sets: note that a set is blocking if and only if its up-closure is and 
by the definitions of challenger and of $\leq_\Pcal$, if we have a challenger to every element of some blocking set then we also get a challenger to every element of its up-closure.

\begin{lem}\label{blockstr}
An up-closed set is blocking if and only if it includes one of the following sets as a subset: $\{\bot\}$, $\{\bot^*\}$, $\{M_+, M_-^*\}$, $\{M_-, M_+^*\}$, $\{N_+, N_-^*\}$, $\{N_-, N_+^*\}$, $\{M_+, N_-, \top^*\}$, $\{M_-, N_+, \top^*\}$, $\{M_+^*, N_-^*, \top\}$ or $\{M_-^*, N_+^*, \top\}$.
\end{lem}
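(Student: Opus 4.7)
The plan is to treat both implications by a direct case analysis against the five possible values of $\Acal(A)$ listed in \autoref{5sets}. For the ``if'' direction, one verifies for each of the ten named sets $B$ that it meets all five of those lists. The first two, $\{\bot\}$ and $\{\bot^*\}$, are trivial since $\bot$ and $\bot^*$ both appear in every one of the five lists. The other eight need only brief checks: for instance $\{M_+,M_-^*\}$ meets lists 1 and 3 via $M_+$ and meets lists 2, 4, and 5 via $M_-^*$; the remaining seven are analogous, with the three-element sets such as $\{M_+,N_-,\top^*\}$ using $\top^*$ to cover list 2.

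For the ``only if'' direction, suppose $B$ is up-closed and blocking. If $\bot\in B$ or $\bot^*\in B$ we are done, so assume neither lies in $B$. Meeting list 1 (which is $\Pcal+\bot^*$) and list 2 (which is $\Pcal^*+\bot$) then forces $B$ to contain at least one promise and at least one co-promise, and up-closure to the maximum of each order yields $\top,\top^*\in B$. The remaining constraints, that $B$ meet lists 3, 4, and 5 without using $\bot$ or $\bot^*$, simplify using up-closure to:
\begin{enumerate}
\item[(I)] $M_+\in B$ or $N_+^*\in B$;
\item[(II)] $N_+\in B$ or $M_+^*\in B$;
\item[(III)] one of $M_-, N_-, M_-^*, N_-^*$ lies in $B$.
\end{enumerate}
I would then split on the witness of (III). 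If $M_-\in B$, up-closure gives $M_+\in B$, so (II) yields either $N_+\in B$, producing $\{M_-,N_+,\top^*\}\subseteq B$, or $M_+^*\in B$, producing $\{M_-,M_+^*\}\subseteq B$. If $N_-\in B$ then (I) gives either $\{M_+,N_-,\top^*\}\subseteq B$ or $\{N_-,N_+^*\}\subseteq B$. The cases $M_-^*\in B$ and $N_-^*\in B$ are dual, yielding respectively $\{M_+,M_-^*\}$ or $\{M_-^*,N_+^*,\top\}$, and $\{N_+,N_-^*\}$ or $\{M_+^*,N_-^*,\top\}$. So in every case $B$ contains one of the ten listed subsets.

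The only real obstacle is avoiding bookkeeping errors when recasting conditions (I)--(III) from the raw definitions of lists 3, 4, 5 and when checking that up-closure supplies $\top$ or $\top^*$ whenever a three-element set is the target; no deeper matroid theory enters, since \autoref{5sets} has already done the heavy lifting. Once this recasting is in place, the proof is a purely mechanical finite check.
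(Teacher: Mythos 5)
Your proof is correct and follows essentially the same route as the paper: handle $\bot,\bot^*$ first, extract $\top,\top^*$ from lists 1 and 2, then split on which of $M_-,N_-,M_-^*,N_-^*$ witnesses that $B$ meets list 5, using up-closure and the remaining lists to land in one of the ten target sets. The paper just treats one witness case by symmetry where you write out all four, but the argument is the same.
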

\begin{proof}
Each of the listed sets is clearly blocking. Conversely, let $B$ be an up-closed blocking set. Since it meets $\{\bot, M_-, N_-, \bot^*, M^*_-, N^*_-\}$ and is up-closed it must contain one of $M_-$, $N_-$, $M_-^*$ or $N_-^*$: by symmetry we may assume without loss of generality that it contains $M_-$. Since it meets $\Pcal^* \cup \bot$ it must contain one of $\top^*$ and $\bot$ and since it meets $\{\bot, N_-, N_+, \bot^*, M_-^*, M_+^*\}$ it must contain one of $N_+$ and $M_+^*$. Now if it contains $\bot$ then it includes $\{\bot\}$, if it contains $M_+^*$ then it includes $\{M_-, M_+^*\}$, and if it contains neither then it contains both of $\top^*$ and $N_+$ and so includes $\{M_-, N_+, \top^*\}$.
\end{proof}

We are now ready to prove the main result of this section:

\begin{proof}[Proof that \autoref{mainprop} implies \autoref{mainthm}]
Suppose that we have  two trees of matroids as in the statement of \autoref{mainthm}. By  \autoref{PCfromwave} and \autoref{ToMminor}, it suffices to prove that every edge either lies in some wave or else is the focus of some cohindrance. So let $e$ be some edge. We now consider the Packing and Covering games $\Gcal(M_-)$, $\Gcal(N_-)$ and $\Gcal^*(\top^*)$, taking our notation as in \autoref{Pgame}. If Packer has a winning strategy in either of $\Gcal(M_-)$ or $\Gcal(N_-)$ or Coverina has a winning strategy in $\Gcal^*(\top^*)$ then we are done by \autoref{Pgame_Packing} or \autoref{Cgame_Covering}. So we suppose for a contradiction that there are no such strategies.

By the determinacy of these games (\autoref{Pgame_det}) we get winning strategies $\sigma_{M_-}$ and $\sigma_{N_-}$ for Coverina in $\Gcal(M_-)$ and $\Gcal(N_-)$ and a winning strategy $\sigma_{\top^*}$ for Packer in $\Gcal^*(\top^*)$. Let $\sigma$ be the union of these three strategies. For any finite play $s$, let $l(s)$ be the last move of $s$. Note that if $s \in \sigma$ then $l(s)$ is always of the form $e(f)$ for some $f \in E(T)$. For any edge $tt'$ of $T$, let $\sigma[tt']$ be $\{\overline s | s \in \sigma \text{ and } l(s) = e(tt')\}$. Let $U$ be the set of edges $tt'$ of $T$ at which $\sigma[tt']$ is blocking, and let $T'$ be the subtree of $T$ on the vertices which can be joined to $t_0$ via a path all of whose edges are in $U$.

We now define two trees of matroids on $T'$, to which we will apply \autoref{mainprop} to obtain the desired contradiction. For each $u \in T'$, we apply \autoref{preblock} to the arena $A(u)$ and the function $\rho \colon e(tu) \mapsto \sigma[tu]$ to get a new arena $A'(u) = (\overline M'(u), \overline N'(u), E'(u), F'_u, e(uu^-)$, where we choose the underlying sets $E'(u)$in such a way that all the sets $E'(u) \sm(F_u + e(uu^-))$ are disjoint and contain no dummy edges. Then $(T', \overline M')$ and $(T', \overline N')$ are trees of matroids.

Now we consider the Packing game $\Gcal'(M_-) = \Gcal(T', \overline M ', \Psi_M \cap \Omega(T'), \overline N ', \Psi_N \cap \Omega(T'), M_-, e)$. We will use a slightly different notation for this game than for $\Gcal(M_-)$, putting dashes on the notation used in $\Gcal(M_-)$. Thus, for example, the current promise at any point is denoted by $P_c'$. We can convert $\sigma_{M_-}$ into a winning strategy for Coverina in $\Gcal'(M_-)$ as follows: Coverina should imagine an auxiliary play in the game $\Gcal(M_-)$, in which she plays according to $\sigma_{M_-}$, and for which she should ensure that at any point the current node, edge and promise agree with those in $\Gcal'(M_-)$. When Packer plays a tactic $K_n'$, Coverina should choose a tactic $K_n$ attaining $P_c = P_c'$ in $A_c$ as in \autoref{preblock} and she should let her response $f_n'$ be the move $f_n$ prescribed by $\sigma_{M_-}$ in response to $K_n$.

By \autoref{Pgame_Packing}, the existence of this winning strategy entails that there is no wave fulfilling $M_-$ in $(M_{\Psi_M \cap \Omega(T')}(T', \overline M'), (M_{\Psi_N \cap \Omega(T')}(T', \overline N'))$. Similarly, there is no wave fulfilling $N_-$ and no cowave fulfilling $\top^*$ for this pair. By \autoref{minor_remark}, since the set $\{M_-, N_-, \top^*\}$ is blocking, there is some minor of this pair for which Packing/Covering fails to hold. Thus in order to obtain the desired contradiction by applying \autoref{mainprop} to this minor, we just need to show that every end of $T'$ is in $\Psi_M \triangle \Psi_N$.

Let $\omega = (t_n | n \in \Nbb)$ be an end of $T'$. For each $n$ the set $\sigma[t_nt_{n+1}]$ is blocking and does not contain $\bot^*$, so must meet $\Pcal$. Thus there must be some play $s \in \sigma_{M_-} \cup \sigma_{N_-}$ with $l(s) = e(t_nt_{n+1})$. Since there are only finitely many such plays for each $n$, we obtain by \autoref{Infinity_Lemma} that there is some infinite play $\hat s$ according to one of $\sigma_{M_-}$ or $\sigma_{N_-}$ with $\hat s_{2n} = e(t_nt_{n+1})$ for each $n$. But then since these strategies are winning for Coverina, it follows that $\omega$ must be in at most one of $\Psi_M$ and $\Psi_N$. A similar argument shows that it is also in at least one of $\Psi_M$ and $\Psi_N$, so that it is in $\Psi_M \triangle \Psi_N$ as required.

\end{proof}

\section{Main result}\label{main}

As we have just shown, in order to prove our main result it remains to prove the special case given in \autoref{mainprop}.

Throughout this section we fix two trees of matroids as in the statement of \autoref{mainprop}. Our aim is to show that the pair $(M_{\Psi_M}(T, M), M_{\Psi_N}(T, N))$ satisfies matroid intersection. We shall suppose that it does not, and in the remainder of this section we will derive a contradiction from that supposition. However, it will become clear during the course of the proof that we must rely on two technical lemmas, whose proofs we defer to the next section.

By \autoref{ToMminor} and \autoref{PCfromwave}, we may assume that there is some edge $e$ of $E(T)$ which is not in any wave or cowave for our pair of matroids. We now consider the Packing and Covering games $\Gcal(M_-)$ and $\Gcal^*(M_+^*)$, taking our notation as in \autoref{Pgame}. By \autoref{Pgame_Packing} Packer does not have a winning strategy in $\Gcal(M_-)$, and by \autoref{Cgame_Covering} Coverina does not have a winning strategy in $\Gcal^*(M_+^*)$. So by the determinacy of these games, there are winning strategies $\sigma_{M_-}$ for Coverina in $\Gcal(M_-)$ and  $\sigma_{M^*_+}$ for Packer in $\Gcal^*(M_+^*)$. Let $\sigma$ be the union of these strategies. 

Let $t_0$ be the unique vertex of $T$ with $e \in E(t_0)$.
In order to get a contradiction, we shall recursively construct two infinite plays $s_{M_-}$
and $s_{M_+^*}$ in $\Gcal(M_-)$ and $\Gcal^*(M_+^*)$ respectively.
We shall construct $s_{M_-}$
and $s_{M_+^*}$ such that they are both played along the same ray $(t_i|i\in \Nbb)$ from $t_0$
and such that either Packer wins $s_{M_-}$ or Coverina wins $s_{M_+^*}$.

More explicitly, let us say that a finite or infinite play $s$ is $(M, i)$-weak if there is some $j \geq i$ such that the challenge $s_{2j}$ is defined and $M$-weak. We define $(N, i)$-weak, $(M^*, i)$-weak and $(N^*, i)$-weak similarly. We shall recursively build a ray $(t_i | i \in \Nbb)$ from $t_0$ in $T$ and sequences $(B_i| i \in \Nbb)$ of blocking sets and $(\lambda_i \colon B_i \to \sigma | i \in \Nbb)$ of functions, with the following properties:
\begin{enumerate}
\item $B_0 = \{M_-,M_+^*\}$ and $\lambda_0$ sends both elements to trivial plays.
\item Each of the sets $B_i$ is one of the blocking sets listed in \autoref{blockstr}.
\item For any $P \in B_i$ with $i > 0$ the play $\lambda_i(P)$ is a play in $\sigma$ with last move $e(t_{i-1}t_i)$ and $\overline {\lambda_{i}(P)} \leq P$.
\item For any $P, Q \in B_{i}$ with $\overline{\lambda_i(P)} = \overline{\lambda_i(Q)}$ we have $\lambda_i(P) = \lambda_i(Q)$.
\item For any $i > 0$ and any $P \in B_i$ there is some $P' \in B_{i-1}$ such that $\lambda_i(P)$ is an extension of $\lambda_{i-1}(P')$.
\item For any $i$ there is some $j \geq i$ such that one of the following is true:
\begin{itemize}
\item For each $P \in B_j \cap \Pcal$ the play $\lambda_j(P)$ is $(N, i)$-weak.
\item For each $P \in B_j \cap \Pcal^*$ the play $\lambda_j(P)$ is $(M^*, i)$-weak.
\end{itemize}
\item For any $i$ there is some $j \geq i$ such that one of the following is true:
\begin{itemize}
\item For each $P \in B_j \cap \Pcal$ the play $\lambda_j(P)$ is $(M, i)$-weak.
\item For each $P \in B_j \cap \Pcal^*$ the play $\lambda_j(P)$ is $(N^*, i)$-weak.
\end{itemize}
\end{enumerate}

It is possible to recursively build a sequence satisfying 1-5  by \autoref{blocklem}. To get the additional conditions, we will need to make use of the results of \autoref{cases}. But before we do this, we will explain why the existence of such a sequence would result in a contradiction. As each end of $T$ is in precisely one of $\Psi_M$ and $\Psi_N$, we may without loss of generality suppose that the end $(t_i | i \in \Nbb)$ of $T$ is in $\Psi_N \setminus \Psi_M$. Since each $B_i$ is finite, by \autoref{Infinity_Lemma}, we can find an infinite play $s_{M_-}$ such that for each $i \in \Nbb$ the restriction $s_{M_-} \restric_{2i}$ is in both $\sigma_{M_-}$ and the image of $\lambda_i$. Thus $s_{M_-}$ is an infinite play according to $\sigma_{M_-}$. Since this is a winning strategy for Coverina, there must be some $i_{M_-}$ such that $(s_{M_-})_{2j}$ is never an $N$-weak challenge for $j \geq i_{M_-}$. Similarly, we can build an infinite play $s_{M_+^*}$ such that for each $i \in \Nbb$ the restriction $s_{M_+^*
} \restric_{2i}$ 
is in both $\sigma_{M^*_+}$ and the image of $\lambda_i$, and there is some $i_{M_+^*}$ such that $(s_{M_+^*})_{2i}$ is never an $M^*$-weak challenge for $j \geq i_{M_+^*}$. Now let $i$ be whichever of $i_{M_-}$ and $i_{M_+^*}$ is larger, and apply condition 6 above. If the first option holds, then $s_{M_-}$ is $(N, i)$-weak, contrary to the construction of $i$. But if the second option holds then $s_{M_+^*}$ is $(M^*, i)$-weak, which is again a contradiction.

So to complete our proof it remains to show how we can ensure that the sequence we recursively construct satisfies the 6\textsuperscript{th} and 7\textsuperscript{th} conditions above. In order to do this, it is enough to show how, given choices of $t_k$, $B_k$ and $\lambda_k$ for $k \leq i$ satisfying 1-5 we can extend these finite sequences to longer finite sequences $(t_k | k \leq j)$, $(B_k | k \leq  j)$ and $(\lambda_k | k \leq j)$ for some $j \geq i$ such that one of the following is true:
\begin{itemize}
\item For each $P \in B_j \cap \Pcal$ the play $\lambda_j(P)$ is $(N, i)$-weak.
\item For each $P \in B_j \cap \Pcal^*$ the play $\lambda_j(P)$ is $(M^*, i)$-weak.
\end{itemize}

For if we can do this, then we can use a symmetrical construction to further extend our sequences to $(B_k | k \leq j')$ and $(\lambda_k | k \leq j')$ for some $j' \geq j$ such that one of the following is true:
\begin{itemize}
\item For each $P \in B_j \cap \Pcal$ the play $\lambda_j(P)$ is $(M, i)$-weak.
\item For each $P \in B_j \cap \Pcal^*$ the play $\lambda_j(P)$ is $(N^*, i)$-weak.
\end{itemize}
Repeatedly carrying out this pair of constructions and, if they don't make the sequences longer, extending them using \autoref{blocklem}, we will obtain infinite sequences satisfying all the conditions above.

So suppose that we are given choices of $t_k$, $B_k$ and $\lambda_k$ for $k \leq i$ and that we wish to extend these sequences to satisfy condition 6 at $i$. The way we do this depends on the value of $B_i$. We cannot, by the construction of the Packing and Covering games, have $B_i = \{\bot\}$ or $B_i = \{\bot^*\}$. If $B_i = \{M_+, M_-^*\}$, then  we are done, since the play $\lambda_i(M_+)$ is necessarily $N$-weak. The cases where $B_i$ is one of $\{M_-, M_+^*\}$, $\{N_+, N_-^*\}$ or $\{N_-, N_+^*\}$ are dealt with similarly.

The next case, $B = \{M_-, N_+, \top^*\}$, is a little trickier. Here we may be forced to extend the sequence. The object we need in order to do this is encoded in the following definition:

\begin{dfn}\label{tactician+}
 Let $A = (M, N, E, F, e)$, and $ \gamma_{M_-}$, $ \gamma_{N_+}$ and $ \gamma_{\top^*}$ be challengers to the respective promises.
A {\em tactician$^+$} for a blocking set $B$ at an edge $f\in F$ in $A$ is a function $\mu$ sending each $P$ in $B$ to a pair $(Q, K)$, where $Q\in \{M_-,N_+,\top^*\}$ and $K$ is a tactic attaining $Q$ in $A$ and $\varphi_K(f) \leq P$
and $\gamma_Q(K)=f$.
\end{dfn}

Note that in the context of \autoref{tactician+}, $F$ cannot be empty since $\{M_-,N_+,\top^*\}$ is blocking. 

We are interested in the case where $\gamma_{M_-} = \gamma^{\sigma_{M_-}}_{\lambda_i(M_-)}$ (the challenger determined by the strategy $\sigma_{M_-}$ after the finite play $\lambda_i(M_-)$), $\gamma_{N_+} = \gamma^{\sigma_{M_-}}_{\lambda_i(N_+)}$ and $\gamma_{\top^*} = \gamma^{\sigma_{M_+^*}}_{\lambda_i(\top^*)}$
In this context, given such $f$, $B$ and $\mu$, we can extend our sequences as follows: we choose $t_{i+1}$ with $f = e(t_it_{i+1})$, we choose $B_{i+1}$ to be $B$, and for each $P \in B_{i+1}$ we take $\lambda_{i+1}(P)$ to be the play consisting of $\lambda_i(\pi_1(\mu(P)))$ followed by the tactic $\pi_2(\mu(P))$ and then the edge $f$. We must be able to find some extension like this by the following lemma:

\begin{lem}\label{tactician+_exists}
For each  $f\in F$ and blocking set $B$ included 
in $\overline\gamma_{M_-}[f]\cup \overline\gamma_{N_+}[f]\cup \overline\gamma_{\top^*}[f]$, there is a 
tactician$^+$ $\mu_B$ for $B$ at $f$.
\end{lem}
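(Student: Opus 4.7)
The plan is to prove this lemma essentially by unpacking the definitions: the hypothesis that $B \subseteq \overline\gamma_{M_-}[f] \cup \overline\gamma_{N_+}[f] \cup \overline\gamma_{\top^*}[f]$ already encodes the existence of the witnessing tactics, and the tactician${}^+$ is essentially a systematic choice of those witnesses.

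More precisely, I would proceed as follows. Fix $P \in B$. By hypothesis, $P$ lies in at least one of the three sets $\overline\gamma_{M_-}[f]$, $\overline\gamma_{N_+}[f]$, $\overline\gamma_{\top^*}[f]$, so I choose some $Q \in \{M_-, N_+, \top^*\}$ with $P \in \overline\gamma_Q[f]$. Now recall that $\overline\gamma_Q[f]$ is defined as the up-closure (under $\leq_{\Pcal}$, extended to $\Pcal \cup \Pcal^*$) of the set
\[
\{\overline\gamma_Q(K) \mid K \in \Kcal(A,Q) \text{ and } \gamma_Q(K) = f\},
\]
where $\overline\gamma_Q(K) = \varphi_K(\gamma_Q(K))$. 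Hence there exists some tactic $K \in \Kcal(A,Q)$ with $\gamma_Q(K) = f$ and $\overline\gamma_Q(K) \leq P$. But $\overline\gamma_Q(K) = \varphi_K(f)$, so this $K$ satisfies both conditions required in \autoref{tactician+}: namely $\gamma_Q(K) = f$ and $\varphi_K(f) \leq P$. Setting $\mu_B(P) = (Q, K)$ (using the axiom of choice to make this selection simultaneously for every $P \in B$) produces the desired tactician${}^+$.

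There is no real obstacle here beyond being careful with the quantifier order and the definition of the up-closure. The only thing one might wish to verify is that the set $\Kcal(A,Q)$ is nonempty for each of the three values of $Q$ under consideration, but this is immediate because by assumption there is at least one $P \in B$ with $P \in \overline\gamma_Q[f]$ precisely when $Q$ appears in the covering of $B$, and in that case the up-closure being nonempty forces $\Kcal(A,Q)$ to supply a witnessing tactic. In summary, the lemma is a direct consequence of the definitions of challenger and of $\overline\gamma[f]$, recorded as a separate statement only because it will be invoked repeatedly in the recursive construction described in \autoref{main}.
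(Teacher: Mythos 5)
Your proof is correct and takes essentially the same approach as the paper's, namely unwinding the definitions of $\overline\gamma_Q[f]$ and of tactician$^+$. In fact your version is slightly more careful: the up-closure only guarantees $\varphi_K(f)\le P$ (which is what \autoref{tactician+} requires), whereas the paper's proof imprecisely asserts equality $\varphi_K(f)=P$, and it also contains a typo ($\gamma_{M_-}(K)=f$ where $\gamma_Q(K)=f$ is meant) that you have implicitly corrected.
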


\begin{proof}
 For each $P\in B$, the promise $P$ is in $\overline \gamma_{Q}[f]$ for some $Q\in \{M_-,N_+,\top^*\}$. 
Then there is a tactic $K$ fulfilling $Q$ at $e$ such that $\varphi_K(f)=P$ and 
$\gamma_{M_-}(K)=f$. We let $\mu_B(P)=(Q,K)$.
\end{proof}

In order to ensure that our extension is helpful, we use the following lemma, to be proved in the next section:

\begin{lem}\label{tactician+_lem}
Let $A = (M, N, E, F, e)$ be an arena,  $ \gamma_{M_-}$, $ \gamma_{N_+}$ and let $ \gamma_{\top^*}$ be challengers as in \autoref{tactician+}. Then there are a blocking set $B$, an edge $f \in F$ and a tactician$^+$ $\mu$ for $B$ at $f$ in $A$ such that one of the following holds:
\begin{enumerate}[(i)]
\item Double Extension case: $B= \{M_-,N_+,\top^*\}$ and $\pi_1(\mu(P))=P$ for each $P\in B$;
\item Weak Challenge case in the Packing game:
For any tactic $K$ with $(N_+,K)$ in the image of $\mu$, the edge $f$ is an $N$-weak challenge to $K$;
\item Weak Challenge case in the Covering game:
For any tactic $K$ with $(\top^*,K)$ in the image of $\mu$, the edge $f$ is an $M^*$-weak challenge to $K$.
\end{enumerate}
\end{lem}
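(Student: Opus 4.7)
The plan is to begin by applying \autoref{blocklem} to the blocking set $\{M_-, N_+, \top^*\}$ together with the given challengers, obtaining an edge $f \in F$ such that $R := \overline\gamma_{M_-}[f] \cup \overline\gamma_{N_+}[f] \cup \overline\gamma_{\top^*}[f]$ is blocking. This $f$ will serve as the edge in all three possible conclusions of the lemma; moreover, since each $\overline\gamma_P[f]$ is up-closed, so is $R$. By \autoref{tactician+_exists}, any blocking subset $B \subseteq R$ admits a tactician$^+$ at $f$, so the real work is to make the right choices of $B$ and $\mu$.

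Next I would attempt the Double Extension case. This is available precisely when $M_- \in \overline\gamma_{M_-}[f]$, $N_+ \in \overline\gamma_{N_+}[f]$, and $\top^* \in \overline\gamma_{\top^*}[f]$: in that event, for each $P \in B_0 := \{M_-, N_+, \top^*\}$ one can choose a tactic $K_P$ attaining $P$ with $\gamma_P(K_P) = f$ and $\varphi_{K_P}(f) \leq P$, set $\mu(P) = (P, K_P)$, and obtain (i). If this attempt fails, some $P_0 \in B_0$ satisfies $P_0 \notin \overline\gamma_{P_0}[f]$, and the proof enters a case analysis on which of the three inclusions hold and which fail.

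In each failing subcase, the strategy is to exploit the blocking structure of $R$ (via \autoref{blockstr}) to choose an appropriate blocking subset $B \subseteq R$, together with, for each $P \in B$, a choice $Q(P) \in B_0$ and tactic $K(P) \in \Kcal(A, Q(P))$ satisfying $\gamma_{Q(P)}(K(P)) = f$ and $\varphi_{K(P)}(f) \leq P$. The key step is to arrange these choices so that either every tactic assigned to $Q(P) = N_+$ has $\varphi_{K(P)}(f) \notin \{\top, N_+, N_-\}$ or $f \notin C^N_{K(P)}$, realising (ii), or else the analogous dual condition holds for every tactic assigned to $\top^*$, realising (iii). The duality between $(M, \Pcal)$ and $(M^*, \Pcal^*)$ should cut the amount of casework roughly in half.

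The main obstacle is the last step: producing, when the target promise cannot be reached at $f$ with promise $\leq P$, tactics whose circuits (respectively cocircuits) miss $f$. This is a statement about the finite arena $A$ and amounts to a strengthening of the Packing/Covering theorem for finite matroids beyond its classical form. I expect this is precisely where \autoref{lemma27} and \autoref{lemma17}, deferred to \autoref{cases}, are brought to bear.
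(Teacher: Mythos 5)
Your proposal goes wrong at the first step, in a way that the rest of the plan cannot repair. You begin by applying \autoref{blocklem} to obtain a single edge $f$ with $\overline\gamma[f]$ blocking, and you commit to this $f$ as the edge in the lemma's conclusion. But a single $f$ with $\overline\gamma[f]$ blocking is not enough: by \autoref{lem5_minus} the triple $(\overline\gamma_{M_-}[f],\overline\gamma_{N_+}[f],\overline\gamma_{\top^*}[f])$ could satisfy any of seven structural conditions, and only the first four allow a tactician$^+$ to be built at that $f$. The last three (conditions 5, 6, 7) give no usable structure locally, and there is no way to extract one of the lemma's three conclusions from them at that $f$. The paper's proof is therefore set up as a minimal counterexample argument: assume the lemma fails with $|F|$ minimal, use \autoref{preblock} (not just \autoref{blocklem}) to deduce that $\overline\gamma[f]$ is blocking for \emph{every} $f \in F$, and then show every $f$ falls into one of the ``bad'' cases 5, 6, 7. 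This global information is essential.

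Once all of $F$ is classified into cases 5, 6, 7, the paper partitions $F$ accordingly into sets $G$, $H$, $J$ and applies \autoref{lemma27} to the entire arena with this partition, not to a single upper edge. The resulting wave or cohindrance yields a tactic $K$ attaining $M_-$, $N_+$, or $\top^*$, and it is the challenger's response $\gamma(K)$ — which may be any edge of $F$, not the $f$ you started with — that determines the ultimate contradiction (or in the third outcome, gives the Weak Challenge case at $\gamma(K)$). You correctly anticipate that \autoref{lemma27} must be ``brought to bear,'' but without the global partition of $F$ and the freedom to take a fresh edge $\gamma(K)$ as the lemma's $f$, the argument does not close. In short: the proof must track the structure of $\overline\gamma$ across all of $F$ simultaneously, and \autoref{lemma27} is invoked once on the whole arena rather than locally at a pre-chosen edge.
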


The Weak Challenge cases are self-explanatory: for example, if we have the Weak Challenge case in the Packing game then this ensures that for each $P \in B_{i+1} \cap \Pcal$ the play $\lambda_{i+1}(P)$ is $(N, i)$-weak. The Double Extension case is more subtle. In this case, we find ourselves in the same situation we were in before, with $B_{i+1} = \{M_-, N_+, \top^*\}$, but we don't seem to have made any progress. However, we can apply the Lemma again repeatedly to get a contradiction as follows: 

Suppose for a contradiction that there is no finite $j \geq i$ for which there are extensions $(B_k | k \leq j)$ and $(\lambda_k | k \leq j)$ of our sequences which satisfy condition 6 at $i$ and $j$. Then we recursively build sequences $(B_j | j > i)$, $(\lambda_j | j > i)$ and $(t_j | j > i)$, where for each $j > i$ we choose $t_j$, $B_j$ and $\lambda_j$ as above using \autoref{tactician+_lem}. Since as we have noted by our supposition we never have either challenge case, we get that $B_j = \{M_-, N_+, \top^*\}$,  $\lambda_j(M_-)$ extends $\lambda_{j-1}(M_-)$ and $\lambda_j(N_+)$ extends $\lambda_{j-1}(N_+)$ for each $j > i$. So there are two infinite plays $u_{M_-}$ and $u_{N_+}$ according to $\sigma_{M_-}$ such that, for each $j \geq i$, each $u_P$ extends $\lambda_j(P)$. Let $\omega$ be the end $(t_k|k \in \Nbb)$. As $\sigma_{M_-}$ is winning and in $u_{M_-}$ all challenges are eventually $N$-weak, we must have $\omega  \not \in \Psi_M$. Similarly $\omega \not \in \Psi_N$, which is the desired 
contradiction.

Thus there is some finite $j \geq i$ for which there are extensions $(B_k | k \leq j)$ and $(\lambda_k | k \leq j)$ of our sequences which satisfy condition 6 at $i$ and $j$, as required. This completes our treatment of the case $B = \{M_-, N_+, \top^*\}$. The case $B = \{ M_+^*, N_-^*,\top\}$ is similar, using the dual of \autoref{tactician+_lem}.

The case $B = \{M_+, N_-, \top^*\}$, is very similar, but there is an additional complexity. Once more we may be forced to extend the sequences $(t_k)$, $(B_k)$ and $(\lambda_k)$. This time the object we need in order to do this is encoded in the following definition:

\begin{dfn}\label{tactician-}
 Let $A = (M, N, E, F, e)$, and $ \gamma_{M_+}$, $ \gamma_{N_-}$ and $ \gamma_{\top^*}$ be challengers to the respective promises.
A {\em tactician$^-$} for a blocking set $B$ at an edge $f\in F$ in $A$ is a function $\mu$ sending each $P$ in $B$ to a pair $(Q, K)$, where $Q\in \{M_+,N_-,\top^*\}$ and $K$ is a tactic attaining $Q$ in $A$ and $\varphi_K(f) \leq P$
and $\gamma_Q(K)=f$.
\end{dfn}

Note that in the context of \autoref{tactician-}, $F$ cannot be empty since $\{M_+,N_-,\top^*\}$ is blocking.

We are interested in the case where $\gamma_{M_+} = \gamma^{\sigma_{M_-}}_{\lambda_i(M_+)}$, $\gamma_{N_-} = \gamma^{\sigma_{M_-}}_{\lambda_i(N_-)}$ and $\gamma_{\top^*} = \gamma^{\sigma_{M_+^*}}_{\lambda_i(\top^*)}$. 
In this context, given such $f$, $B$ and $\mu$, we can extend our sequences as follows: we choose $t_{i+1}$ with $f = e(t_it_{i+1})$, we choose $B_{i+1}$ to be $B$, and for each $P \in B_{i+1}$ we take $\lambda_{i+1}(P)$ to be the play consisting of $\lambda_i(\pi_1(\mu(P)))$ followed by the tactic $\pi_2(\mu(P))$ and then the edge $f$. We must be able to find some extension like this by the following lemma, which can be proved similarly to \autoref{tactician+_exists}:

\begin{lem}\label{tactician-_exists}
For each  $f\in F$ and blocking set $B$ included 
in $\overline\gamma_{M_+}[f]\cup \overline\gamma_{N_-}[f]\cup \overline\gamma_{\top^*}[f]$, there is a 
tactician$^-$ $\mu_B$ for $B$ at $f$.\qed
\end{lem}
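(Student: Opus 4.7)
This is structurally identical to \autoref{tactician+_exists}, with the triple $\{M_-, N_+, \top^*\}$ replaced throughout by $\{M_+, N_-, \top^*\}$. The plan is therefore to transcribe that proof with the obvious symbol substitutions, and to verify that nothing in the argument actually depended on which of the two triples was being used; inspection of \autoref{tactician+} versus \autoref{tactician-} shows that their clauses have identical logical shape, so this works.

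Concretely, I would fix an arbitrary $P\in B$ and invoke the hypothesis
\[
B \subseteq \overline\gamma_{M_+}[f]\cup \overline\gamma_{N_-}[f]\cup \overline\gamma_{\top^*}[f]
\]
to pick some $Q\in\{M_+,N_-,\top^*\}$ with $P\in \overline\gamma_Q[f]$. Unfolding the definition of $\overline\gamma_Q[f]$ as the $\leq_\Pcal$-up-closure of $\{\gov_Q(K)\mid K\in\Kcal(A,Q),\ \gamma_Q(K)=f\}$, this gives a tactic $K\in \Kcal(A,Q)$ with $\gamma_Q(K)=f$ and $\varphi_K(f) = \gov_Q(K) \leq_\Pcal P$. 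Setting $\mu_B(P):=(Q,K)$ yields a pair that meets all four conditions demanded by \autoref{tactician-}.

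Repeating this choice independently for each $P\in B$ produces the tactician$^-$ $\mu_B$. I do not foresee any real obstacle: the requirements in \autoref{tactician-} are purely pointwise in $P$, so the selections never need to be coordinated across different elements of $B$, and the definition of $\overline\gamma_Q[f]$ already bundles together exactly the data the conclusion asks for. The only sanity check worth recording is that $F$ is automatically nonempty, since $\{M_+,N_-,\top^*\}$ is blocking and the sets $\overline\gamma_Q[f]$ are only defined for $f\in F$ --- this is the mirror of the remark immediately following \autoref{tactician-}, and requires no work beyond noting it.
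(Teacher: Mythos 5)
Your proposal is correct and matches the paper's intended argument: the paper omits the proof with a remark that it goes ``similarly to \autoref{tactician+_exists},'' and your transcription of that proof with $\{M_-,N_+,\top^*\}$ replaced by $\{M_+,N_-,\top^*\}$ is exactly what is meant. You even silently repair the small imprecision in the paper's proof of \autoref{tactician+_exists} (which writes $\varphi_K(f)=P$ where, because $\gov_Q[f]$ is an up-closure, one should only claim $\varphi_K(f)\leq_\Pcal P$ --- which is all that \autoref{tactician-} requires).
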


In order to ensure that our extension is helpful, we will once more rely on a technical lemma, to be proved in the next section:

\begin{lem}\label{tactician-_lem}
Let $A = (M, N, E, F, e)$ be an arena,  $ \gamma_{M_+}$, $ \gamma_{N_-}$ and let $ \gamma_{\top^*}$ be challengers as in \autoref{tactician-}. Then there are a blocking set $B$, an edge $f \in F$ and a tactician$^-$ $\mu$ for $B$ at $f$ in $A$ such that one of the following holds:
\begin{enumerate}[(i)]
\item Double Extension case: $B= \{M_+,N_-,\top^*\}$ and $\pi_1(\mu(P))=P$ for each $P\in B$;
\item Weak Challenge case in the Packing game:
For any tactic $K$ with $(N_-,K)$ in the image of $\mu$, the edge $f$ is an $N$-weak challenge to $K$;
\item Weak Challenge case in the Covering game:
For any tactic $K$ with $(\top^*,K)$ in the image of $\mu$, the edge $f$ is an $M^*$-weak challenge to $K$;
\item Improvement case 1: 
$B=\{M_-,N_+,\top^*\}$;
\item Improvement case 2: 
$B=\{N_-^*,M_+^*,\top\}$.
\end{enumerate}
\end{lem}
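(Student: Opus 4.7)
The plan is to apply \autoref{blocklem} to the three given challengers---noting that $\{M_+, N_-, \top^*\}$ is itself a blocking set by \autoref{blockstr}---in order to locate an edge $f \in F$ at which the up-closed set
\[
U_f := \overline\gamma_{M_+}[f] \cup \overline\gamma_{N_-}[f] \cup \overline\gamma_{\top^*}[f]
\]
is blocking, and then to perform a case analysis on which minimal blocking sets sit inside $U_f$ to produce the required triple $(B, f, \mu)$.

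By \autoref{blockstr}, $U_f$ contains one of the listed minimal blocking sets. Since every element of $\overline\gamma_Q[f]$ arises from a legal challenge and so its label cannot be $\bot$ or $\bot^*$, the singletons $\{\bot\}$ and $\{\bot^*\}$ are excluded. If $\{M_-, N_+, \top^*\} \subseteq U_f$, I take $B = \{M_-, N_+, \top^*\}$ and construct the witnessing tactician$^-$ by the argument of \autoref{tactician-_exists}: each $P \in B$ lies above some $\varphi_K(f)$ with $K$ attaining an element of $\{M_+, N_-, \top^*\}$, and we set $\mu(P)$ accordingly. This gives case (iv). If instead $\{M_+^*, N_-^*, \top\} \subseteq U_f$, the same construction (using the covering-side challenger for the starred entries) yields case (v). A short check, using the $\Pcal$/$\Pcal^*$ separation of the three challenger sources, shows that if neither of these inclusions holds then the remaining minimal blocking pairs and the dual triangle $\{M_-^*, N_+^*, \top\}$ cannot provide a minimal blocking subset of $U_f$ on their own, so $\{M_+, N_-, \top^*\} \subseteq U_f$.

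In this remaining case I set $B = \{M_+, N_-, \top^*\}$ and first attempt the Double Extension: for each $P \in B$ pick a tactic $K_P$ attaining $P$ itself with $\gamma_P(K_P) = f$ and $\varphi_{K_P}(f) \leq P$. If the chosen $K_{N_-}$ makes $f$ an $N$-strong challenge and $K_{\top^*}$ makes $f$ an $M^*$-strong challenge, we are in case (i). If $f$ is $N$-weak for $K_{N_-}$ we are in case (ii); if $f$ is $M^*$-weak for $K_{\top^*}$ we are in case (iii).

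The main obstacle is showing that this Double Extension attempt is well defined even in the adversarial situation where, for instance, $M_+$ lies in $U_f$ only through $\gamma_{N_-}$ rather than $\gamma_{M_+}$, so that one cannot use a same-type tactic for some $P \in B$. In such a situation the forced cross-type routing $\mu(M_+) = (N_-, K)$ puts $\varphi_K(f) \leq M_+$, i.e.\ $\varphi_K(f) \in \{M_+, M_-\}$, which by the definition of $N$-weak makes $f$ an $N$-weak challenge for $K$ essentially for free, placing us in case (ii); an entirely analogous argument in the covering direction yields case (iii). Cataloguing the possible cross-type routings and pinning down precisely which of them force (ii), (iii) or one of the improvement cases (iv), (v) is the technical heart of the argument; I expect to close it by appealing to the finite-matroid strengthening of Packing/Covering promised in \autoref{lemma27} and \autoref{lemma17}, applied to the finite auxiliary arena obtained from $A$ by a suitable instance of \autoref{preblock} that absorbs the upper edges other than $f$.
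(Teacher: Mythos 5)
Your proposal contains several genuine gaps that prevent it from being completed as written.

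First, the claim that the four pair blocking sets $\{M_+,M_-^*\}$, $\{M_-,M_+^*\}$, $\{N_+,N_-^*\}$, $\{N_-,N_+^*\}$ and the dual triangle $\{M_-^*,N_+^*,\top\}$ ``cannot provide a minimal blocking subset of $U_f$ on their own'' is false. Nothing in the $\Pcal/\Pcal^*$ separation of the challenger sources prevents, say, $M_+\in\gov_{M_+}[f]$ and $M_-^*\in\gov_{\top^*}[f]$, so $\{M_+,M_-^*\}\se U_f$ can certainly occur without $\{M_+,N_-,\top^*\}\se U_f$. These cases are not vacuous and are not an obstruction: they in fact hand you Weak Challenge outcomes directly, but you need to carry them rather than dismiss them.

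Second, the treatment of the case $\{M_+,N_-,\top^*\}\se U_f$ is incomplete in a way you only partly acknowledge. Outcome (ii) requires that \emph{every} tactic $K$ with $(N_-,K)$ in the image of $\mu$ has $f$ an $N$-weak challenge. Your cross-type routing argument establishes $N$-weakness only for a forced routing like $\mu(M_+)=(N_-,K)$. If at the same time $\mu(N_-)=(N_-,K')$ with $\varphi_{K'}(f)=N_-$ and $f\in C_{K'}^N$, then $f$ is $N$-strong for $K'$, and outcome (ii) fails even though $f$ is $N$-weak for $K$. In such a mixed situation none of (i)--(v) is satisfied by the $\mu$ you produced, so the argument stalls. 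The paper resolves exactly this by a re-routing trick: when $N_-\in\gov_{M_+}[f]$, one chooses $\mu$ with $\pi_1(\mu(N_-))=M_+$, so that the \emph{only} possible source of a tuple $(N_-,K)$ in the image is $\mu(M_+)$, and that one is automatically $N$-weak. This idea is absent from your sketch, and without it the case analysis does not close.

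Third, your final appeal to \autoref{lemma27}/\autoref{lemma17} ``applied to the finite auxiliary arena obtained from $A$ by a suitable instance of \autoref{preblock} that absorbs the upper edges other than $f$'' is not what the paper does and is too vague to check. The paper instead runs a global argument: it supposes the lemma false and picks a counterexample with $|F|$ minimal; by \autoref{preblock}/\autoref{rem_challenger} every $f\in F$ has $\overline\gamma[f]$ blocking (Sublemma~\ref{every_f_is_blocking-}); the combinatorial Lemma~\ref{lem4-} together with Sublemma~\ref{4or5} then shows each $f\in F$ satisfies one of two specific patterns, yielding a partition $F=H\dot\cup J$; finally \autoref{lemma17} is applied to the whole arena with this partition $(H,J)$ of the upper edges (not to an arena with a single upper edge), and each of its three outcomes leads to a concrete tactic $K$ whose challenger value $f=\gamma_Q(K)$ violates the constraints coming from $H$ and $J$, giving the contradiction. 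The selection of $f$ thus emerges from the finite-matroid structure lemma, not from \autoref{blocklem} applied once at the start. Your sketch has neither the minimality argument nor the $H/J$ partition, and without these the use of \autoref{lemma17} cannot be made to go through.
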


The Weak Challenge cases are once more self-explanatory, and the Double Extension case can be dealt with as before. But Improvement cases 1 and 2 reduce the situation to one in which the current blocking set is $\{M_-, N_+, \top^*\}$ or $\{ M_+^*, N_-^*,\top\}$, and both of these situations have been dealt with above.

This completes our treatment of the case $B = \{M_+, N_-, \top^*\}$. The case $B = \{M_-^*, N_+^*,\top\}$ is similar, using the dual of \autoref{tactician+_lem}. We have now dealt with all cases which can arise, and this completes the proof of \autoref{mainprop} and hence of \autoref{mainthm}.

\section{Proof of Lemmas \ref{tactician+_lem} and \ref{tactician-_lem}}\label{cases}

\subsection{Proof of \autoref{tactician+_lem}}

The aim of this subsection is to prove \autoref{tactician+_lem}.
First we need some intermediate lemmas.
We start with a lemma on waves in finite pairs of matroids.

\begin{lem}\label{lemma27}
 Let $M$ and $N$ be two matroids on the same finite ground set $E$.
Let $G,H,J\se E$ disjoint  and $e\in E\sm (G\cup H\cup J)$.
Then one of the following is true.
\begin{enumerate}
\item There is a wave with $e$ on the $N$-side in $(M/(H\cup J), N/(H\cup J))$.
\item There is a wave $N$-spanning $e$ in $(M\sm (G\cup J),N\sm G/J)$.
\item There is some $G'\se G$ and a cohindrance $(Y,T^M,T^N)$ focusing on $e$ in 
$(M\sm (G'\cup J),N\sm G'/ J)$
such that there is some $M$-cocircuit $b$ with $e\in b\se (T^M+e)\sm H$. 
\end{enumerate}

\end{lem}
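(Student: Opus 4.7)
I will prove the contrapositive: assuming options (1) and (2) both fail, I will construct the cohindrance structure required by option (3). The main tool is the finite Packing/Covering Theorem for finite matroid pairs (\cite{BC:PC}), applied to a carefully chosen minor of $(M,N)$, combined with the exchange-chain lemmas \autoref{chain3}, \autoref{chain2}, \autoref{chain4} and the ideas behind \autoref{intermediate_lemma7}.

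The choice of minor is dictated by the asymmetric roles of $G$, $H$, $J$. I would apply finite Packing/Covering to the pair $(M_1, N_1) := (M/H \sm (G \cup J),\, N \sm (G \cup H) /J)$ on ground set $E' := E \sm (G \cup H \cup J)$. The asymmetric treatment of $H$---contracted in $M$ but deleted in $N$---is the crucial ingredient: cocircuits of $M_1$ are exactly the $M$-cocircuits contained in $E'$, so any cocircuit through $e$ extracted from a cohindrance in $(M_1, N_1)$ automatically avoids $H$, as required for option (3). Finite Packing/Covering yields a partition $E' = P \dot\cup Q$ with a packing $I^M \dot\cup I^N = P$ of $(M_1\restric_P, N_1\restric_P)$ and a covering of $(M_1.Q, N_1.Q)$.

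I next argue $e \in Q$. Were $e \in P$, the packing would be a wave in $(M_1\restric_P, N_1\restric_P)$ covering $e$; lifting via \autoref{contract_waves} along the contraction of $H$ in the $M$-coordinate and adjusting via exchange chains (in the style of \autoref{chain3} and \autoref{chain2}) to move $e$ out of the wave's ground set, I would obtain a wave in $(M \sm (G \cup J),\, N \sm G / J)$ that $N$-spans $e$, contradicting the failure of (2). So $e \in Q$ and the covering gives a cowave in $(M_1.Q, N_1.Q)$. Applying the dual of \autoref{chain4} in the spirit of \autoref{intermediate_lemma7}, I extract a cohindrance $(Y, T^M, T^N)$ focusing on $e$ in $(M_1.Q, N_1.Q)$, and hence an $M_1.Q$-cocircuit $b$ with $e \in b \se T^M + e$, which---by the choice of minor---is an $M$-cocircuit satisfying $b \se (T^M + e) \sm H$.

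Finally, I set $G' := G \sm Y$ and lift the cohindrance $(Y, T^M, T^N)$ back to the pair $(M \sm (G' \cup J),\, N \sm G' / J)$ required by option (3). The elements of $G \cap Y = G \sm G'$ were deleted when forming $(M_1, N_1)$, so reinstating them does not disturb cospanning on $Y$; the contraction of $H$ in $M$ is reversed using the dual of \autoref{contract_waves}. The main obstacle is this last lifting step: the asymmetric minor operations (deletion in $N$ versus contraction in $M$ for $H$, deletion versus no-deletion for $G \sm G'$) must be reversed simultaneously, and one must verify that after this reversal $(Y, T^M, T^N)$ still forms a cohindrance focusing on $e$ and the cocircuit $b$ still lies in $(T^M + e) \sm H$. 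This bookkeeping parallels the case analysis in the proof of \autoref{intermediate_lemma7}, but with several additional layers of minor operations to track.
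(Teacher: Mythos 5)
Your proposal is a genuinely different route from the paper's, and it contains real gaps rather than just unfinished bookkeeping. The paper never works in a single minor: it extracts a cohindrance focusing on $e$ from $(M\sm(G\cup J), N\sm G/J)$ and, separately, a cowave $M$-cospanning $e$ from $(M/(H\cup J), N/(H\cup J))$ --- both via \autoref{blockstr} rather than the raw Packing/Covering theorem --- and then glues them with the dual of \autoref{joinwaves}. Because each object stays in the minor in which it was born, nothing ever has to be ``un-minored.''

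The decisive gap in your plan is the claim that cocircuits of $M_1 = M/H\sm(G\cup J)$ are exactly the $M$-cocircuits contained in $E'$. They are not: the cocircuits of $(M/H)\sm(G\cup J)$ are the minimal nonempty sets of the form $C^*\sm(G\cup J)$ where $C^*$ is an $M$-cocircuit avoiding $H$. Deleting $G\cup J$ can chop such a $C^*$ down to a proper subset, which is then a cocircuit of $M_1$ but not of $M$. So the $b$ you extract from the cohindrance in $(M_1.Q, N_1.Q)$ need not be an $M$-cocircuit, which is exactly what outcome (3) demands. The paper avoids this by harvesting the cocircuit from $M/(H\cup J)$, where cocircuits genuinely are $M$-cocircuits (those avoiding $H\cup J$); your extra deletion of $G\cup J$ from $M$ breaks this.

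A second gap is the lifting, both in the ``$e\in Q$'' argument and at the end. Since $H$ is contracted in $M_1$ but deleted in $N_1$, there is no common set $X$ with $(M_1, N_1) = ((M\sm(G\cup J))/X, (N\sm G/J)/X)$, so \autoref{contract_waves} does not apply to undo the $H$-operations in the way you sketch. More fundamentally, un-contracting in one coordinate and un-deleting in the other generically destroys the wave or cowave property; this is not bookkeeping in the style of \autoref{intermediate_lemma7}, it is the core difficulty, and it is precisely what the paper's two-minor gluing strategy is designed to avoid.
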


If $G=H=J=\emptyset$, then this lemma just says that $\{M_-,N_+,\top^*\}$ is blocking.
So this lemma can be seen as an extension of this fact.

\begin{proof}

We assume that we do not have outcome 1 or 2 and aim to show that then we get outcome 3.
Thus as $\{M_-,N_+,\top^*\}$ is blocking by \autoref{blockstr}, 
in the pair $(M',N')=(M\sm (G\cup J),N\sm G/J)$
it must be that the promise $\top^*$ is attainable: There is a cohindrance $(X,S^M,S^N)$ focusing on $e$.

Now we tweak this cohindrance a little to get outcome 3.
As $\{M_-,M_+^*\}$ is blocking by \autoref{blockstr} and we do not have outcome 1, 
in the pair $(M/(H\cup J),N/(H\cup J))$ there is a cowave $(Y, T^M,T^N)$ that $M$-cospans $e$. 
In particular, there is an $M$-cocircuit $b$ with $e\in b\se (T^M+e)$. So $b$ avoids $H\cup J\cup G'$, where $G'=G\sm Y$.
Then $(X\sm Y, S^M\sm Y, S^N\sm Y)$ is a cohindrance focusing on $e$ in the pair $(M'\sm Y,N'\sm Y)$.
By the dual of \autoref{joinwaves} $(X\cup Y, (S^M\sm Y)\cup T^M, (S^N\sm Y)\cup T^N)$ is a 
cohindrance in $(M\sm (G'\cup J), N\sm G'/J)$, and together with $b$ it witnesses that we have  outcome 3.
\end{proof}

\autoref{lemma27} is the main principle we use in the proof of \autoref{tactician+_lem}.
The work of bridging from \autoref{lemma27} to \autoref{tactician+_lem} is done in the following lemma.

\begin{lem}\label{lem5_minus}
Let $\overline \gamma_{M_-}$, $\overline\gamma_{N_+}\se \Pcal-\bot$ and $\overline\gamma_{\top^*}\se \Pcal^*-\bot^*$
be up-closed such that $\overline \gamma=\overline\gamma_{M_-}\cup\overline\gamma_{N_+}\cup\overline\gamma_{\top^*}$ is blocking.
Then one of the following is true. 
\begin{enumerate}
 \item One of the 4 sets $\{M_+,M_-^*\}$, $\{M_-,M_+^*\}$, $\{N_+,N_-^*\}$ or $\{N_-,N_+^*\}$
is a subset of $\overline \gamma$;
\item $M_- \in \overline \gamma_{M_-}$ and $\{N_+, \top^*\} \se \overline \gamma$;
\item $N_-\in \overline\gamma_{M_-}$ and 
$\{M_+,\top^*\}\se \overline\gamma$;
\item $\top\in \overline\gamma_{M_-}$ and 
one of $\{M_-^*,N_+^*\}\se \overline\gamma_{\top^*}$ or $\{M_+^*,N_-^*\}\se \overline\gamma_{\top^*}$;
\item $\overline\gamma_{M_-}\se \{M_+,N_+,\top\}$ and 
$\overline\gamma_{\top^*}\se \{M_+^*,N_+^*, \top^*\}$ and 
one of $\{M_-,N_+\}\se \overline\gamma$ or
$\{M_+,N_-\}\se \overline\gamma$;

\item $\overline\gamma_{M_-}=\emptyset$ and
$N_-^*\not\in \overline\gamma_{\top^*}$ and $\{M_-^*,N_+^*,\top\}\se\overline\gamma$
and $\overline\gamma_{N_+}\se \{N_+,\top\}$;

\item $\overline\gamma_{M_-}=\emptyset$ and
$\{M_+^*,N_-^*,\top\}\se\overline\gamma$
and $\overline\gamma_{N_+}\se \{M_+,\top\}$;

\end{enumerate}
\end{lem}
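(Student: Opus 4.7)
The plan is to perform a systematic case analysis driven by \autoref{blockstr}. Since $\overline\gamma$ is up-closed and blocking but contains neither $\bot$ nor $\bot^*$, \autoref{blockstr} guarantees that $\overline\gamma$ includes at least one of the eight minimal blocking sets listed there other than $\{\bot\}$ and $\{\bot^*\}$. If $\overline\gamma$ includes one of the four pairs $\{M_+,M_-^*\}$, $\{M_-,M_+^*\}$, $\{N_+,N_-^*\}$ or $\{N_-,N_+^*\}$, outcome~1 holds and we are done. Otherwise I work under the additional hypothesis that none of these pairs lies in $\overline\gamma$, and I analyse the four possible types of $\overline\gamma_{M_-}$: (a)~$M_-\in\overline\gamma_{M_-}$; (b)~$N_-\in\overline\gamma_{M_-}$ but $M_-\notin\overline\gamma_{M_-}$; (c)~$\overline\gamma_{M_-}$ is a nonempty subset of $\{\top,M_+,N_+\}$; or (d)~$\overline\gamma_{M_-}=\emptyset$. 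Up-closure makes this list exhaustive.

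In each case the argument proceeds by three parallel observations: up-closure propagates elements within each of $\overline\gamma_{M_-}$, $\overline\gamma_{N_+}$, $\overline\gamma_{\top^*}$; pair-avoidance excludes certain elements from $\overline\gamma$; and blocking then forces one of the triples of \autoref{blockstr} to be included. In case~(a), up-closure gives $\{M_+,\top\}\subseteq\overline\gamma$, so pair-avoidance forces $M_-^*,M_+^*\notin\overline\gamma$, leaving only the triples $\{M_+,N_-,\top^*\}$ and $\{M_-,N_+,\top^*\}$ available. The second directly gives outcome~2; the first together with pair-avoidance of $\{N_+,N_-^*\}$ forces $N_+\notin\overline\gamma$ and then $N_-\in\overline\gamma_{M_-}$ by up-closure, giving outcome~3. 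Case~(b) runs analogously with the roles of $M$ and $N$ swapped, and in both subcases yields outcome~3 once one observes that $M_-\in\overline\gamma$ in the presence of $M_-\notin\overline\gamma_{M_-}$ must come from $\overline\gamma_{N_+}$, which then contains $M_+$ by up-closure.

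Case~(c) is split by whether $\overline\gamma_{\top^*}\subseteq\{M_+^*,N_+^*,\top^*\}$: if so, the two $\Pcal^*$-valued triples are unavailable and the blocking condition supplies a $\Pcal$-valued triple, producing outcome~5; if not, pair-avoidance propagates and blocking forces $\overline\gamma_{\top^*}$ to include one of $\{M_-^*,N_+^*\}$ or $\{M_+^*,N_-^*\}$, giving outcome~4. Case~(d) is split by which of $M_-^*,N_-^*$ lies in $\overline\gamma_{\top^*}$: if neither does, outcome~5 follows by the same triple argument as in case~(c); if $N_-^*\in\overline\gamma_{\top^*}$ (with or without $M_-^*$), pair-avoidance of $\{N_+,N_-^*\}$ and $\{N_-,N_+^*\}$ together with blocking force $M_+^*\in\overline\gamma_{\top^*}$, then pair-avoidance of $\{M_-,M_+^*\}$ forces $\overline\gamma_{N_+}\subseteq\{M_+,\top\}$, yielding outcome~7; and $M_-^*\in\overline\gamma_{\top^*}$ without $N_-^*$ forces, by an analogous chain of implications, $\overline\gamma_{N_+}\subseteq\{N_+,\top\}$ and yields outcome~6.

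The main obstacle is bookkeeping rather than conceptual: in each of the roughly ten subcases one must verify that the three constraints---up-closure, pair-avoidance, and the inclusion of a triple from \autoref{blockstr}---force exactly the configuration specified by the target outcome. The only non-trivial manoeuvre, used several times, is that when pair-avoidance excludes some $M_\pm$ or $N_\pm$ from $\overline\gamma$, up-closure immediately constrains each of $\overline\gamma_{M_-}$ and $\overline\gamma_{N_+}$ to a short list of possibilities, pinning down the remaining outcome uniquely.
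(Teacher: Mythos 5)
Your case organization---splitting on which of $M_-$, $N_-$, neither, or nothing lies in $\overline\gamma_{M_-}$, rather than (as the paper does) arguing by contradiction and splitting on which of the four blocking triples from \autoref{blockstr} the set $\overline\gamma$ contains---is a legitimate alternative and relies on exactly the same tools: \autoref{blockstr}, up-closure, and the pair-avoidance constraints that come from having disposed of outcome~1. With the bookkeeping filled in, this route would give a valid proof of essentially the same length as the paper's.

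There is, however, a genuine logical error in case~(a), first subcase. You write that when $\{M_+,N_-,\top^*\}\se\overline\gamma$, pair-avoidance of $\{N_+,N_-^*\}$ ``forces $N_+\notin\overline\gamma$ and then $N_-\in\overline\gamma_{M_-}$ by up-closure, giving outcome~3.'' This is backwards. From $N_-\in\overline\gamma$ up-closure gives $N_+\in\overline\gamma$, and pair-avoidance then excludes $N_-^*$ from $\overline\gamma$, not $N_+$; there is no way to conclude $N_+\notin\overline\gamma$, and the remainder of your chain collapses. In fact in case~(a) both subcases deliver outcome~2 directly: $M_-\in\overline\gamma_{M_-}$ by hypothesis, $\top^*\in\overline\gamma$ always (since $\overline\gamma$ is blocking, hence meets $\Pcal^*$, hence contains $\top^*$ by up-closure), and $N_+\in\overline\gamma$ either from the triple itself or by up-closure from $N_-$. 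The detour towards outcome~3 is both unnecessary and unsound. The slip happens not to damage your final conclusion because a valid outcome obtains in every subcase regardless, but as written the deduction is wrong and would need rewriting.
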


\begin{proof}
Since $\overline\gamma\cap(\Pcal+\bot^*)$ is nonempty and $\bot^*\notin \overline\gamma$, 
we get that $\overline\gamma\cap\Pcal$ is nonempty, thus $\top\in \overline\gamma$. 
Similarly, $\top^*\in \overline\gamma$.
Now suppose for a contradiction that we do not have one of the outcomes 1-7.
By \autoref{blockstr}, 
one of the 4 sets  $\{M_+,N_-,\top^*\}$, $\{M_+^*,N_-^*,\top\}$, $\{M_-,N_+,\top^*\}$ or
$\{M_-^*,N_+^*,\top\}$
is a subset of $\overline \gamma$.

\paragraph{Case 1: $\{M_+,N_-,\top^*\}\se \overline \gamma$ or $\{M_-,N_+,\top^*\}\se \overline \gamma$.}
Then $M_+$ and $N_+$ are in $\overline \gamma$, so $M_-^*$ and $N_-^*$ are not as we do not have outcome 1.
Also, $M_- \not \in \overline \gamma_{M_-}$ as we do not have outcome 2, and $N_- \not \in \overline \gamma_{M_-}$ as we do not have outcome 3. 
Thus we have outcome 5, which is the desired contradiction.
\paragraph{Case 2: $\{M_+^*, N_-^*, \top\} \se \overline \gamma$.}
Then $M_-$ and $N_+$ cannot be in $\overline \gamma_{N_+}$ as we do not have outcome 1. Also, $\overline \gamma_{M_-}$ must be empty as we do not have outcome 4. Thus we have outcome 7, which is the desired contradiction.
\paragraph{Case 3: $\{M_-^*, N_+^*, \top\} \se \overline \gamma$ but $\{M_+^*, N_-^*, \top\} \not \se \overline \gamma$.}  
Then $M_+$ and $N_-$ cannot be in $\overline \gamma_{N_+}$ as we do not have outcome 1. By assumption, $N_-^* \not \in \overline \gamma_{\top^*}$. Also, $\overline \gamma_{M_-}$ must be empty as we do not have outcome 4. Thus we have outcome 6, which is the desired contradiction.
\end{proof}

Now we are in a position to prove \autoref{tactician+_lem}.

\begin{proof}[Proof of \autoref{tactician+_lem}.]
Suppose for a contradiction that there are an arena $A=(M,N,E,F,e)$ and challengers 
$\gamma_{M_-}$, $\gamma_{N_+}$ and $\gamma_{\top^*}$
for which \autoref{tactician+_lem} is false.
We pick these such that the set $F$ of upper edges is of minimal size.
Although we will not need it, it is worth noting  that $F$ is nonempty since $\{M_-,N_+,\top^*\}$ is blocking.
We abbreviate $\overline\gamma[f]=\overline \gamma_{M_-}[f]\cup\overline \gamma_{N_+}[f]\cup\overline \gamma_{\top^*}[f]$.

\begin{sublem}\label{every_f_is_blocking}
For each $f\in F$ the set $\overline\gamma[f]$ is blocking.
\end{sublem}

\begin{proof}
This is immediate by the minimality of $|F|$ and \autoref{preblock} and \autoref{rem_challenger}.

\end{proof}

\begin{sublem}\label{567}
For each $f\in F$ one of the following three conditions from \autoref{lem5_minus} is true.
\begin{enumerate}
\setcounter{enumi}{4}
\item $\overline\gamma_{M_-}[f]\se \{M_+,N_+,\top\}$ and 
$\overline\gamma_{\top^*}[f]\se \{M_+^*,N_+^*, \top^*\}$ and 
one of $\{M_-,N_+\}\se \overline\gamma[f]$ or
$\{M_+,N_-\}\se \overline\gamma[f]$;

\item $\overline\gamma_{M_-}[f]=\emptyset$ and
$N_-^*\not\in \overline\gamma_{\top^*}[f]$ and $\{M_-^*,N_+^*,\top\}\se\overline\gamma[f]$
and $\overline\gamma_{N_+}[f]\se \{N_+,\top\}$;

\item $\overline\gamma_{M_-}[f]=\emptyset$ and
$\{M_+^*,N_-^*,\top\}\se\overline\gamma[f]$
and $\overline\gamma_{N_+}[f]\se \{M_+,\top\}$;

\end{enumerate} 
\end{sublem}

\begin{proof}
By \autoref{every_f_is_blocking} and \autoref{lem5_minus} the sets 
$\overline \gamma_{M_-}[f]$, $\overline \gamma_{N_+}[f]$ and $\overline \gamma_{\top^*}[f]$
fulfil one of the outcomes of \autoref{lem5_minus}.
If they satisfy 5,6 or 7 we are done. Otherwise they satisfy one of the conditions 1-4 of 
\autoref{lem5_minus}.

\paragraph{Case 1: $\overline \gamma_{M_-}[f]$, $\overline \gamma_{N_+}[f]$ and $\overline \gamma_{\top^*}[f]$ satisfy 1:}

Let $B$ be one of $\{M_+,M_-^*\}$, $\{M_-,M_+^*\}$, $\{N_+,N_-^*\}$ or $\{N_-,N_+^*\}$ such that $B\se \overline \gamma[f]$.
Then we pick a tactician$^+$ $\mu_B$ as in \autoref{tactician+_exists}.
If $B=\{M_+,M_-^*\}$ or $B=\{M_-,M_+^*\}$, we get the Weak Challenge case in the Packing game. 
If $B=\{N_-,N_+^*\}$ or $B=\{N_+,N_-^*\}$, we get the Weak Challenge case in the Covering game. Thus we get a contradiction in this case.

\paragraph{Case 2: $\overline \gamma_{M_-}[f]$, $\overline \gamma_{N_+}[f]$ and $\overline \gamma_{\top^*}[f]$ satisfy 2:}
Then $M_- \in \overline \gamma_{M_-}[f]$ and $\{N_+, \top^*\} \se \overline \gamma[f]$.
We let $B=\{M_-,N_+,\top^*\}$.
If $N_+\in \overline \gamma_{N_+}[f]$, then
we can define some $\mu$ as in the Double Extension case.
Otherwise,  $N_+\in \overline \gamma_{M_-}[f]$, so that without loss of generality, the $\mu_B$ from \autoref{tactician+_exists} is such that $\pi_1\mu_B(N_+)=M_-$.
Thus
$\mu_B^{-1}(N_+,K)=M_-$ for every tactic $K$ where this is defined.
So we get the  Weak Challenge case in the Packing game.
Thus we get a contradiction in this case.

\paragraph{Case 3: $\overline \gamma_{M_-}[f]$, $\overline \gamma_{N_+}[f]$ and $\overline \gamma_{\top^*}[f]$ satisfy 3:}

Then $B=\{M_+,N_-,\top^*\}\se \overline \gamma[f]$.
Furthermore, there is some tactic $K_1$ fulfilling $M_-$ at $e$ such that $\varphi_{K_1}(f)= N_-$, and some tactic $K_2$ fulfilling $M_-$ or $N_+$ at $e$ with 
$\varphi_{K_2}(f)=M_+$.
So that without loss of generality the $\mu_B$ from \autoref{tactician+_exists} 
is such that $\mu_B(N_-)=(M_-,K_1)$ and one of $\mu_B(M_+)=(M_-,K_2)$ or $\mu_B(M_+)=(N_+,K_2)$.
In particular, for every tactic $K$ the set $\mu_B^{-1}(N_+,K)$ is either empty or is the singleton $\{M_+\}$. So we get the  Weak Challenge case in the Packing game.
Thus we get a contradiction in this case.

\paragraph{Case 4: $\overline \gamma_{M_-}[f]$, $\overline \gamma_{N_+}[f]$ and $\overline \gamma_{\top^*}[f]$ satisfy 4:}

Then there is some tactic $K$ fulfilling $M_-$ at $e$ with $\varphi_K(f)=\top$, and 
either $\{M_-^*,N_+^*\}\se \overline\gamma_{\top^*}[f]$ or $\{M_+^*,N_-^*\}\se \overline\gamma_{\top^*}[f]$. 
If $\{M_-^*,N_+^*\}\se \overline\gamma_{\top^*}[f]$, then we let $B=\{M_-^*,N_+^*,\top\}$.
So that without loss of generality the $\mu_B$ from \autoref{tactician+_exists} is such that
$\mu_B(\top)=(M_-,K)$.
So we get the  Weak Challenge case in the Packing game.
The case $\{M_+^*,N_-^*\}\se \overline\gamma_{\top^*}[f]$ is similar.
Thus we get a contradiction in this case.
\end{proof}

\autoref{567} motivates the following definition:
Let $G\se F$ be the set of those $f\in F$ that satisfy 5 and 
let $H\se F$ be the set of those $f\in F\sm G$ that satisfy 6. Finally
let $J=F\sm G\sm J$. Note that any $f\in J$ satisfies 7 by \autoref{567}.
Now we apply \autoref{lemma27} to $G$, $H$ and $J$. According to which outcome we get, we now split into cases.

\paragraph{Case 1: We get outcome 1 of \autoref{lemma27}:}
There is a wave with $e$ on the $N$-side in $(M/(H\cup J), N/(H\cup J))$.
This wave gives rise to a tactic $K$ fulfilling $M_-$ at $e$ such that:
\[
 \varphi_K(f)=\begin{cases}
   \top \text{ if } f\in H\cup J \\
    M_- \text{ or } N_-  \text{ or } \bot \text{ if } f\in G\\
     \end{cases} 
\]
As $\gamma_{M_-}$ is a challenger, there is some $f\in F$ such that $\gamma_{M_-}(K)=f$.
As $\overline\gamma_{M_-}[x]=\emptyset$ for each $x\in H\cup J$, $f$ cannot be in $H\cup J$ and it cannot be in $G$ either since $\overline\gamma_{M_-}[x]\se\{M_+,N_+,\top\}$ for each $x\in G$.
This is the desired contradiction.

\paragraph{Case 2: We get outcome 2 of \autoref{lemma27}:}
There is a wave $N$-spanning $e$ in $(M\sm (G\cup J),N\sm G/J)$.
This wave gives rise to a tactic $K$ fulfilling $N_+$ at $e$ such that:
\[
 \varphi_K(f)=\begin{cases}
\bot \text{ if } f\in G\\
    M_- \text{ or } N_-  \text{ or } \bot \text{ if } f\in H\\
   N_+ \text{ if } f\in J \\
     \end{cases} 
\]

As $\gamma_{N_+}$ is a challenger, there is some $f\in F$ such that $\gamma_{N_+}(K)=f$.
Note that $f\notin G$. 
As $\overline\gamma_{N_+}[x]\se\{N_+,\top\}$ for each $x\in H$, $f$ cannot be in $H$ and it cannot be in $J$ either since $\overline\gamma_{N_+}[x]\se\{M_+,\top\}$ for each $x\in J$. This is the desired contradiction.

\paragraph{Case 3: We get outcome 3 of \autoref{lemma27}:}
There is some $G'\se G$ and a cohindrance $(Y,T^M,T^N)$ focusing on $e$ in 
$(M\sm (G'\cup J),N\sm G'/ J)$
such that there is some $M$-cocircuit $b$ with $e\in b\se (T^M+e)\sm H$. 
This cohindrance gives rise to a tactic $K$ fulfilling $\top^*$ at $e$ with $C_K^{M^*}=b$ such that:
\[
 \varphi_K(f)=\begin{cases}
\top^* \text{ if } f\in G' \\
M_-^* \text{ or } N_-^*  \text{ or } \bot^* \text{ if } f\in H\cup (G\sm G') \\
M_+^* \text{ if } f\in J \\
       \end{cases} 
\]
Let $f=\gamma_{\top^*}(K)$.
If $f\in G$, 
then it is in $G'$ because $\overline\gamma_{\top^*}[x]\se\{M_+^*,N_+^*,\top^*\}$ for each $x\in G$.
Then 
we let $B=\{M_-,N_+,\top^*\}$ (if $\{M_-,N_+\}\se \overline \gamma[f]$) or $B=\{M_+,N_-,\top^*\}$ (if $\{M_+,N_-\}\se \overline \gamma[f]$). We pick $\mu_B$ such 
that $\mu_B(\top^*)=(\top^*,K)$. 
Thus we get the Weak Challenge case in the Covering game as  $C_K^{M^*}$ does not meet $G'$.

If $f\in H$, then $\varphi_K(f)=M_-^*$ as $N_-^*\not\in \overline\gamma_{\top^*}[f]$ 
and $\gamma_{\top^*}$ is a challenger. Thus we let $B=\{M_-^*,N_+^*,\top\}$ and we pick $\mu_B$ such 
that $\mu_B(M_-^*)=(\top^*,K)$. Thus we get the Weak Challenge case in the Covering game.

If $f\in J$, we let $B=\{M_+^*,N_-^*,\top\}$ and we pick $\mu_B$ such 
that $\mu_B(M_+^*)=(\top^*,K)$. Thus we get the Weak Challenge case in the Covering game.
This completes the proof.
\end{proof}

\subsection{Proof of \autoref{tactician-_lem}}

The aim of this subsection is to prove \autoref{tactician-_lem}.
The structure of the proof will be as in the last subsection.

\begin{lem}\label{lemma17}
 Let $M$ and $N$ be two matroids on the same finite ground set $E$.
Let $H,J\se E$ disjoint  and $e\in E\sm H\sm J$.
Then one of the following is true.
\begin{enumerate}
 \item There is some $H'\se H$ such that there is a wave $M$-spanning $e$ in $(M/H'/J, N\sm H'/J)$.
\item There is some $J'\se J$ and a wave $(X,S^M,S^N)$ with $e$ on the $M$-side in $(M/J',N/J')$ such that there is an $N$-circuit $o$ with $e\in o\se(S^N+e)\sm H$. 
\item There is some $H'\se H$ and a cohindrance $(Y,T^M,T^N)$ focusing on $e$ in $(M/H',N\sm H')$
such that there is some $M$-cocircuit $b$ with $e\in b\se (T^M+e)\sm J$. 
\end{enumerate}

\end{lem}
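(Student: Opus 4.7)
The proof follows the same strategy as \autoref{lemma27}: assume that neither outcome 1 nor outcome 2 holds, and derive outcome 3. The relevant blocking set is $\{M_+, N_-, \top^*\}$ from \autoref{blockstr}, and the main pair is $(M', N') = (M/H/J, N \sm H/J)$ on the ground set $E \sm (H \cup J)$.

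I first rule out $M_+$ and $N_-$ in $(M', N')$. A wave $M$-spanning $e$ in $(M', N')$ is exactly a witness to outcome 1 with $H' = H$, so the failure of outcome 1 rules out $M_+$. For $N_-$, suppose $(X, S^M, S^N)$ is a wave in $(M', N')$ with $e \in S^M$. Provided $e \notin S^N$, there is an $N \sm H/J$-circuit $o$ through $e$ with $o \se S^N + e$, which corresponds by the definition of circuits in a minor to an $N$-circuit $o \cup J''$ for some $J'' \se J$ with $o \cup J'' \se E \sm H$. Setting $X' = X \cup H \cup J$, $S^{M'} = S^M \cup H \cup J$ and $S^{N'} = S^N \cup H \cup J$ produces a wave for $(M, N)$ (the two spanning conditions are clear since $H \cup J$ is added to both sides of the wave, while spanning of $X$ is inherited from the original wave) with $e \in S^{M'}$ and the $N$-circuit $o \cup J''$ contained in $(S^{N'} + e) \sm H$. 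This is outcome 2 with $J' = \emptyset$, contradicting our assumption.

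Therefore $\top^*$ is attainable in $(M', N')$, yielding a cohindrance $(Y, T^M, T^N)$ focusing on $e$ in $(M/H/J, N \sm H/J)$. Since $(M/H/J)^* = M^* \sm (H \cup J)$, every cocircuit of $M/H/J$ is an $M$-cocircuit avoiding $H \cup J$, so the cohindrance supplies an $M$-cocircuit $b$ with $e \in b \se T^M + e$ and $b \cap (H \cup J) = \emptyset$. Because $Y \cap (H \cup J) = \emptyset$, the identity $(M/H).Y = M.Y = (M/H/J).Y$ and its analogue for $N$ show that the same triple $(Y, T^M, T^N)$ is a cohindrance focusing on $e$ in $(M/H, N \sm H)$; the cocircuit $b$ is unchanged and lies in $(T^M + e) \sm J$, giving outcome 3 with $H' = H$.

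The main obstacle is the degenerate subcase of the $N_-$ rule-out in which $e \in S^N$ and no $N$-circuit through $e$ lies inside $S^N + e$. The natural remedy is to take $S^N$ to be a basis of $(N \sm H/J)\restric_X$ and, whenever $e \in S^N$, to exchange $e$ out using any $x \in X \sm S^N$ whose fundamental circuit with respect to $S^N$ contains $e$; if no such exchange is possible then $e$ is a coloop of $(N \sm H/J)\restric_X$, and I expect this to be ruled out by combining the wave structure on the $M$-side with the cocircuit structure witnessing the coloop via an exchange-chain manipulation in the style of \autoref{runchains}, delivering outcome 2 or outcome 3 directly. Unlike in \autoref{lemma27}, no additional cowave ``tweak'' is needed to prune the cocircuit: cocircuits of the contraction $M/(H \cup J)$ automatically avoid $H \cup J$, whereas cocircuits of the deletion $M \sm (G \cup J)$ used in \autoref{lemma27} can extend into $G \cup J$ and must be cut back by a supplementary cowave.
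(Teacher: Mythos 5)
Your derivation of outcome 3 from a cohindrance in $(M/H/J, N\sm H/J)$ is correct, as is the observation that a wave $M$-spanning $e$ in $(M/H/J, N\sm H/J)$ gives outcome 1 with $H'=H$. The genuine gap is in the rule-out of $N_-$: the triple $(X\cup H\cup J,\ S^M\cup H\cup J,\ S^N\cup H\cup J)$ is not a wave for $(M,N)$, because the two spanning sets both contain $H\cup J$ and hence are not disjoint. (Disjointness is implicit in the definition of wave --- a wave is a set admitting a \emph{packing}, and the rank counting in the proof of \autoref{5sets} depends on it.) Nor is there a simple repartition of $H\cup J$ that repairs this: $H$ must go entirely to the $N$-side, since $H$ was \emph{deleted} in $N\sm H/J$ and so the original wave gives no information about spanning $H$ in $N$; but once anything is removed from $S^M\cup H\cup J$ the containment $X\se\Cl_M(S^M\cup H\cup J)$ may fail, because contracting $H\cup J$ can strictly enlarge the span.

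This is exactly the asymmetry that makes \autoref{lemma17} harder than \autoref{lemma27}, and it is why the paper proves \autoref{lemma17} by induction on $|E|$ rather than by a single blocking argument. In \autoref{lemma27} the blocking minor is $(M\sm(G\cup J), N\sm G/J)$, and a wave there with $e$ on the $N$-side \emph{does} project to a wave with $e$ on the $N$-side in $(M/(H\cup J), N/(H\cup J))$, because passing to a contraction minor can only increase spans. Here the direction is reversed: outcome 2 lives in $(M/J', N/J')$, which is ``less contracted'' than the blocking minor $(M/H/J, N\sm H/J)$ on the $M$-side, so one cannot transfer a wave by a one-shot closure argument. The paper's proof peels off single elements $h\in H$ and $j\in J$ and rebuilds the wave with \autoref{joinwaves} and \autoref{intermediate_lemma7} at each step. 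Incidentally, your worry about the ``degenerate subcase'' with $e\in S^N$ never arises: since $S^M$ and $S^N$ are disjoint and $e\in S^M$, automatically $e\notin S^N$, so the exchange-chain manipulation you sketch there is unnecessary --- the real difficulty lies entirely in the lifting step described above.
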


If $H=J=\emptyset$, then this lemma just says that $\{M_+,N_-,\top^*\}$ is blocking.
So this lemma can be seen as an extension of this fact.

\begin{proof}
We prove this lemma by induction on the size of $E$.

\paragraph{Case 1: $H=\emptyset$.}
\paragraph{Subcase 1.1: There is a nonempty wave $(Z,U^M,U^N)$ in $(M,N)$ avoiding $e$.}
Now we apply the induction hypothesis to $(M/Z,N/Z)$ and $\emptyset$ and $J\sm Z$.
If we have outcome 3, we immediately get outcome 3 in $(M,N)$.
If we get a wave as in outcome 1 or 2, we stick $(Z,U^M,U^N)$ onto that wave by \autoref{joinwaves} and get outcome 1 or 2, respectively, in $(M,N)$.

To complete Case 1, it remains to consider the following subcase.

\paragraph{Subcase 1.2: Every nonempty wave in $(M,N)$ contains $e$.} If $J=\emptyset$, we can just use the fact that $\{M_+,N_-,\top^*\}$ is blocking by \autoref{blockstr}.
So let $j\in J$. 
Now we apply the induction hypothesis to $(M/j,N\sm j)$.
If we get outcome 1 or 2, we get outcome 1 or 2, respectively, in  $(M,N)$.
In particular, there is no hindrance in $(M/j,N\sm j)$ focusing on $e$.
So by \autoref{intermediate_lemma7}, $E-j$ is a cowave in $(M/j,N\sm j)$.
And we may assume that we get a cohindrance $(Y,T^M,T^N)$ 
and an $M$-cocircuit as in outcome 3.
By sticking $E-j$ onto $(Y,T^M,T^N)$ if necessary, we may assume that 
$Y=E-j$. 
The edge $j$ is not an $M$-loop since otherwise $\{j\}$ would be a nonempty wave not containing $e$, contrary to our assumption.
Thus $(E,S^M,S^N+j)$ is a cohindrance in $(M,N)$, which together with $b$ witnesses outcome 3.

\paragraph{Case 2: $H\neq \emptyset$.}
\paragraph{Subcase 2.1: There is a nonempty cowave $(Z,U^M,U^N)$ in $M/J,N/J$ avoiding $e$.}
Now we apply the induction hypothesis to $(M\sm Z,N\sm Z)$ and $H\sm Z$ and $J$.
Just as in Subcase 1.1, one checks that if one gets outcome 1, 2 or 3 in the minor, then one gets outcome 1, 2 or 3 in $(M,N)$, respectively.

Thus it remains to consider the following subcase:

\paragraph{Subcase 2.2: Every cowave in $(M/J,N/J)$ contains $e$.}
As we are in Case 2, there is some $h\in H$, and we apply the induction hypothesis to
$(M/h,N\sm h)$ and $H-h$ and $J$.
If we get outcome 1 or 3, we get outcome 1 or 3, respectively, in $(M,N)$.
So we may assume that we have outcome 2:
There is some $J'\se J$ and a wave $(X,S^M,S^N)$ with $e$ on the $M$-side in $(M/(J'+h),N/J'\sm h)$ such that there is an $N$-circuit $o$ with 
$e\in o\se(S^N+e)\sm (H-h)=(S^N+e)\sm H$. 
By adding the edges in $J\sm (X\cup J')$ to $J'$ if necessary, we may assume that
$J'=J\sm X$.

The wave $(X,S^M,S^N)$ is almost the wave we are looking for, except that
it lives in the wrong pair of matroids. Next, we shall extend $(X,S^M,S^N)$
to a wave living in the right pair of matroids.
By the dual of \autoref{intermediate_lemma7},
 either there is a cohindrance focusing on $e$ in $(M/(J+h),N/J\sm h)$
or $E\sm (J+h)$ is a wave in $(M/(J+h),N/J\sm h)$ with spanning sets $T^M$ and $T^N$.
We may assume that the second occurs since the first gives us outcome 3.
Then $(E\sm (J\cup X+h), T^M\sm X, T^N\sm X)$ is a wave in 
$(M/(J\cup X+h),N/(J\cup X)\sm h)$.
By sticking $X$ onto that wave, we get that
$(E\sm (J'+h), S^M\cup T^M\sm X, S^N\cup T^N\sm X)$ is a wave in $(M/(J'+h),N/J'\sm h)$.

The edge $h$ is not an $N$-coloop since otherwise $\{h\}$ would be a cowave, contrary to our assumption in this subcase. So $h$ is not an $N/J'$-coloop either.
Thus $(E\sm J', (S^M+h)\cup T^M\sm X, S^N\cup T^N\sm X)$ is a wave in $(M/J',N/J')$
with $e$ on the $M$-side. Moreover the circuit $o$ witnesses that we have outcome 2.
\end{proof}

\autoref{lemma17} is the main principle we use in the proof of \autoref{tactician-_lem}.
The work of bridging from \autoref{lemma17} to \autoref{tactician-_lem} is done in the following lemma.

\begin{lem}\label{lem4-}
Let $\overline \gamma_{M_+}$, $\overline\gamma_{N_-}\se \Pcal-\bot$ and $\overline\gamma_{\top^*}\se \Pcal^*-\bot^*$
be up-closed such that $\overline \gamma=\overline\gamma_{M_+}\cup\overline\gamma_{N_-}\cup\overline\gamma_{\top^*}$ is blocking.
Then one of the following is true. 
\begin{enumerate}
 \item One of the 6 sets $\{M_+,M_-^*\}$, $\{M_-,M_+^*\}$, $\{N_+,N_-^*\}$, $\{N_-,N_+^*\}$,  
$\{M_-,N_+,\top^*\}$ or $\{M_+^*,N_-^*,\top\}$
is a subset of $\overline \gamma$;
\item $\{M_+,N_-,\top^*\}\se \overline\gamma$ and $\overline\gamma_{M_+}$ meets $\{M_+,N_-\}$;
\item $\top\in \overline\gamma_{M_+}$ and $\{M_-^*,N_+^*\}\se \overline\gamma_{\top^*}$;
\item $\overline\gamma_{M_+}\se \{\top,N_+\}$ and 
$\overline\gamma_{N_-}= \{\top,M_+,N_+,N_-\}$ and
$\top^*\in \overline\gamma_{\top^*}\se \{\top^*,M_+^*\}$;
\item $\overline\gamma_{M_+}=\emptyset$ and 
$\top\in \overline\gamma_{N_-}\se \{\top,N_+\}$ and
$\overline\gamma_{\top^*}=\{\top^*,M_+^*,M_-^*,N_+^*\}$;
\end{enumerate}
\end{lem}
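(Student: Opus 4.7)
The plan is to run the same style of case analysis that worked for \autoref{lem5_minus}, using \autoref{blockstr} as the main tool. Since $\bot \notin \overline\gamma_{M_+} \cup \overline\gamma_{N_-}$ and $\bot^* \notin \overline\gamma_{\top^*}$, the blocking set $\overline\gamma$ must contain one of the eight minimal blocking sets from \autoref{blockstr} other than $\{\bot\}$ and $\{\bot^*\}$. Six of those eight appear in outcome~1, so it remains to handle the two cases in which $\overline\gamma$ contains $\{M_+, N_-, \top^*\}$ or $\{M_-^*, N_+^*, \top\}$ while outcome~1 fails; I will deal with them in turn.

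First suppose $\{M_+, N_-, \top^*\} \se \overline\gamma$. Then $\top^* \in \overline\gamma_{\top^*}$ automatically, and each of $M_+, N_-$ lies in $\overline\gamma_{M_+} \cup \overline\gamma_{N_-}$. If either one lands in $\overline\gamma_{M_+}$ then outcome~2 holds, so I may assume both are in $\overline\gamma_{N_-}$; up-closure in $\geq_\Pcal$ then yields $\overline\gamma_{N_-} \supseteq \{M_+, N_-, N_+, \top\}$. The failure of the forbidden triple $\{M_-, N_+, \top^*\}$ and of the forbidden pairs $\{M_+, M_-^*\}$ and $\{N_-, N_+^*\}$ (combined with up-closure in $\Pcal^*$ to rule out $N_-^*$) then forces $M_- \notin \overline\gamma$ and $\overline\gamma_{\top^*} \se \{\top^*, M_+^*\}$; a short check that $\overline\gamma_{M_+} \se \{\top, N_+\}$ (using $M_- \notin \overline\gamma$ and the sub-case hypothesis) then delivers outcome~4.

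Next suppose $\{M_-^*, N_+^*, \top\} \se \overline\gamma$. Up-closure in $\Pcal^*$ immediately gives $\overline\gamma_{\top^*} \supseteq \{M_-^*, M_+^*, N_+^*, \top^*\}$, and $\top$ lies in $\overline\gamma_{M_+} \cup \overline\gamma_{N_-}$. If $\top \in \overline\gamma_{M_+}$ then outcome~3 holds, so I may assume $\top \in \overline\gamma_{N_-}$ and $\top \notin \overline\gamma_{M_+}$. The failure of the forbidden pairs $\{M_+, M_-^*\}$, $\{M_-, M_+^*\}$, $\{N_-, N_+^*\}$ and of the forbidden triple $\{M_+^*, N_-^*, \top\}$ then removes $M_+, M_-, N_-$ from $\overline\gamma$ and $N_-^*$ from $\overline\gamma_{\top^*}$. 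Up-closure (which would otherwise pull $\top$ into $\overline\gamma_{M_+}$ whenever $N_+$ is there) then forces $\overline\gamma_{M_+} = \emptyset$, $\overline\gamma_{N_-} \se \{\top, N_+\}$ and $\overline\gamma_{\top^*} = \{M_-^*, M_+^*, N_+^*, \top^*\}$, which is outcome~5.

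The only real obstacle is the bookkeeping: in both subcases one must pin down not merely inclusions but the exact equalities that outcomes~4 and~5 require, and this forces one to combine the failure of \emph{every} relevant forbidden configuration from outcome~1 with up-closure in both $\geq_\Pcal$ and $\geq_{\Pcal^*}$. The trickier of the two is the first subcase, since both ``minus'' generators $M_+$ and $N_-$ can a priori sit in $\overline\gamma_{N_-}$ or $\overline\gamma_{M_+}$, and one must use each of the three mentioned forbidden sets plus the $N_-^*$-via-$N_+^*$ up-closure argument to pin $\overline\gamma_{N_-}$ to exactly $\{\top, M_+, N_+, N_-\}$.
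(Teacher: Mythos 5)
Your proposal is correct and follows essentially the same route as the paper's own proof: derive $\top,\top^*\in\overline\gamma$, reduce via \autoref{blockstr} to the two minimal blocking sets not already in outcome 1, and in each case use failure of the outcome-1 configurations together with up-closure in $\Pcal$ and $\Pcal^*$ to pin down outcomes 4 and 5. The only minor slip is in your closing remark: the $N_-^*$-via-$N_+^*$ up-closure step is what fixes $\overline\gamma_{\top^*}\subseteq\{\top^*,M_+^*\}$, not $\overline\gamma_{N_-}$; the latter is pinned down by $M_-\notin\overline\gamma$ together with the subcase hypothesis.
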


\begin{proof}
Since $\overline\gamma\cap(\Pcal+\bot^*)$ is nonempty and $\bot^*\notin \overline\gamma$, 
we get $\overline\gamma\cap\Pcal$ is nonempty, thus $\top\in \overline\gamma$. 
Similarly, $\top^*\in \overline\gamma$.
Now suppose for a contradiction that we do not have one of the outcomes 1-5.
By \autoref{blockstr}, 
either $\{M_+,N_-,\top^*\}\se \overline \gamma$ or $\{M_-^*,N_+^*,\top\}\se \overline \gamma$
as we do not have outcome 1.

If $\{M_+,N_-,\top^*\}\se \overline \gamma$, then $N_+\in \overline \gamma$.
So
$\overline \gamma$ contains neither $M_-^*$ nor $N_+^*$ nor $M_-$ as we do not have outcome 1.
Since we do not have outcome 2, it must be that $\overline \gamma_{M_+}$ avoids $\{M_+,N_-\}$.
Thus $\overline \gamma_{M_+}\se \{\top,N_+\}$ and so 
$\overline\gamma_{N_-}= \{\top,M_+,N_+,N_-\}$ and $\top^*\in \overline\gamma_{\top^*}\se \{\top^*,M_+^*\}$, as they are both up-closed.
Thus we have outcome 4,
which is a contradiction in this case.

Hence it suffices to consider the case that $\{M_-^*,N_+^*,\top\}\se \overline \gamma$.
Then $M_+^*\in \overline \gamma$.
So $\overline \gamma$ contains neither $M_+$ nor $N_-$ nor $N_-^*$ as we do not have outcome 1.
Since we do not have outcome 3, $\overline\gamma_{M_+}=\emptyset$.
Thus $\top\in \overline\gamma_{N_-}\se \{\top,N_+\}$ and
$\overline\gamma_{\top^*}=\{\top^*,M_+^*,M_-^*,N_+^*\}$ as they are both up-closed. 
Thus we have outcome 5, which is the desired contradiction.
\end{proof}

Now we are in a position to prove \autoref{tactician-_lem}.

\begin{proof}[Proof of \autoref{tactician-_lem}.]
Suppose for a contradiction that there are an arena $A=(M,N,E,F,e)$ and challengers 
$\gamma_{M_+}$, $\gamma_{N_-}$ and $\gamma_{\top^*}$
for which \autoref{tactician-_lem} is false.
We pick these such that the set $F$ of upper edges is of minimal size.
Although we will not need it, it is worth noting that $F$ is nonempty since $\{M_+,N_-,\top^*\}$ is blocking.
We abbreviate $\overline\gamma[f]=\overline \gamma_{M_+}[f]\cup\overline \gamma_{N_-}[f]\cup\overline \gamma_{\top^*}[f]$.
The following sublemma may be proved in a similar way to \autoref{every_f_is_blocking}.

\begin{sublem}\label{every_f_is_blocking-}
For each $f\in F$ the set $\overline\gamma[f]$ is blocking.
\end{sublem}

\begin{sublem}\label{4or5}
For each $f\in F$ one of the following two conditions from \autoref{lem4-} is true.
\begin{enumerate}
\setcounter{enumi}{3}
\item $\overline\gamma_{M_+}[f]\se \{\top,N_+\}$ and 
$\overline\gamma_{N_-}[f]= \{\top,M_+,N_+,N_-\}$ and
$\top^*\in \overline\gamma_{\top^*}[f]\se \{\top^*,M_+^*\}$;
\item $\overline\gamma_{M_+}[f]=\emptyset$ and 
$\top\in \overline\gamma_{N_-}[f]\se \{\top,N_+\}$ and
$\overline\gamma_{\top^*}[f]=\{\top^*,M_+^*,M_-^*,N_+^*\}$;
\end{enumerate} 
\end{sublem}

\begin{proof}
By \autoref{every_f_is_blocking-} and \autoref{lem4-} the sets 
$\overline \gamma_{M_+}[f]$, $\overline \gamma_{N_-}[f]$ and $\overline \gamma_{\top^*}[f]$
fulfil one of the outcomes of \autoref{lem4-}.
If they satisfy 4 or 5, we are done. Otherwise they satisfy one of the conditions 1-3 of 
\autoref{lem4-}.

First suppose for a contradiction that they satisfy 1:
Let $B$ be one of $\{M_+,M_-^*\}$, $\{M_-,M_+^*\}$, $\{N_+,N_-^*\}$, $\{N_-,N_+^*\}$,  
$\{M_-,N_+,\top^*\}$ or $\{\top,N_-^*,M_+^*\}$ such that $B\se \overline \gamma[f]$.
Then we pick a tactician$^-$ $\mu_B$ as in \autoref{tactician-_exists}.
If $B=\{M_-,N_+,\top^*\}$, we get Improvement case 1.
If  $B=\{\top,N_-^*,M_+^*\}$, we get Improvement case 2.
If $B=\{M_+,M_-^*\}$ or $B=\{M_-,M_+^*\}$, we get the Weak Challenge case in the Packing game. 
If $B=\{N_-,N_+^*\}$ or $B=\{N_+,N_-^*\}$, we get the Weak Challenge case in the Covering game. Thus we get a contradiction in this case.

Next, we consider the case that $\overline \gamma_{M_+}[f]$, $\overline \gamma_{N_-}[f]$ and $\overline \gamma_{\top^*}[f]$ satisfy 2:
$\{M_+,N_-,\top^*\}\se \overline\gamma[f]$ and $\overline\gamma_{M_+}[f]$ meets $\{M_+,N_-\}$.
We let $B=\{M_+,N_-,\top^*\}$.
If $\overline\gamma_{M_+}[f]\cap\{M_+,N_-\}=\{M_+\}$, 
then $N_-\in \overline\gamma_{N_-}[f]$, and so
we can define some $\mu$ as in the Double Extension case.
Otherwise $N_-\in \overline\gamma_{M_+}[f]$, so that without loss of generality $\mu_B$ is such that $\pi_1(\mu_B(N_-))=M_+$.
Thus
$\mu_B^{-1}(N_-,K)=M_+$ for every tactic $K$ where this is defined.
So we get the  Weak Challenge case in the Packing game.

Thus, it remains to consider the case that $\overline \gamma_{M_+}[f]$, $\overline \gamma_{N_-}[f]$ and $\overline \gamma_{\top^*}[f]$ satisfy 3:
$\top\in \overline\gamma_{M_+}[f]$ and $\{M_-^*,N_+^*\}\se \overline\gamma_{\top^*}[f]$.
We let $B=\{\top, M_-^*,N_+^*\}$ and define the tactician$^-$ $\mu_B$
such that $\pi_1(\mu_B(\top))=M_+$.
So we have the Weak Challenge case in the Packing game. This completes the proof.
\end{proof}

\autoref{4or5} motivates the following definition:
Let $H\se F$ be the set of those $f\in F$ that satisfy 4 and let $J=F\sm H$. Note that any $j\in J$ satisfies 5 by \autoref{4or5}.
Now we apply \autoref{lemma17} to $J$ and $H$. According to which outcome we get, we now split into cases.

\paragraph{Case 1: We get outcome 1 of \autoref{lemma17}:}
There is some $H'\se H$ such that there is a wave $M$-spanning $e$ in $(M/H'/J, N\sm H'/J)$.
This wave gives rise to a tactic $K$ fulfilling $M_+$ at $e$ such that:
\[
 \varphi_K(f)=\begin{cases}
    M_+ \text{ if } f\in H' \\
    M_- \text{ or } N_-  \text{ or } \bot \text{ if } f\in H\sm H' \\
    \top\text{ if } f\in J \\
  \end{cases} 
\]
As $\gamma_{M_+}$ is a challenger, there is some $f\in F$ such that $\gamma_{M_+}(K)=f$.
As $\overline\gamma_{M_+}[j]=\emptyset$ for each $j\in J$, $f$ cannot be in $J$ and it cannot be in $H$ either since $\overline\gamma_{M_+}[h]\se\{\top,N_+\}$ for each $h\in H$.
This is the desired contradiction.

\paragraph{Case 2: We get outcome 2 of \autoref{lemma17}:}
There is some $J'\se J$ and a wave $(X,S^M,S^N)$ with $e$ on the $M$-side in $(M/J',N/J')$ such that there is an $N$-circuit $o$ with $e\in o\se(S^N+e)\sm H$. 
This wave gives rise to a tactic $K$ fulfilling $N_-$ at $e$ with $C_K^N=o$ such that:
\[
 \varphi_K(f)=\begin{cases}
\top \text{ if } f\in J' \\
       M_- \text{ or } N_-  \text{ or } \bot \text{ if } f\in H\cup (J\sm J') \\
       \end{cases} 
\]
Let $f=\gamma_{N_-}(K)$.
If $f\in H$, we let $B=\{M_+,N_-,\top^*\}$ and we pick $\mu_B$ such 
that $\mu_B(N_-)=(N_-,K)$. Thus we get the Weak Challenge case in the Packing game.
If $f\in J$, 
then it must be in $J'$ because $\overline\gamma_{N_-}[j]\se\{\top,N_+\}$ for each $j\in J$.
Then we let $B=\{M_-^*,N_+^*,\top\}$ and we pick $\mu_B$ such 
that $\mu_B(\top)=(N_-,K)$. Thus we get the Weak Challenge case in the Packing game.

\paragraph{Case 3: We get outcome 3 of \autoref{lemma17}:}
There is some $H'\se H$ and a cohindrance $(Y,T^M,T^N)$ focusing on $e$ in $(M/H',N\sm H')$
such that there is some $M$-cocircuit $b$ with $e\in b\se (T^M+e)\sm J$. 
This cohindrance gives rise to a tactic $K$ fulfilling $\top^*$ at $e$ with $C_K^{M^*}=b$ such that:
\[
 \varphi_K(f)=\begin{cases}
N_+^* \text{ if } f\in H' \\
       M_-^* \text{ or } N_-^*  \text{ or } \bot^* \text{ if } f\in J\cup (H\sm H') \\
       \end{cases} 
\]
Let $f=\gamma_{\top^*}(K)$.
If $f\in H$, 
then it is in $H'$ because $\overline\gamma_{\top^*}[h]\se\{\top^*,M_+^*\}$ for each $h\in H$.
Then 
we let $B=\{M_+,N_-,\top^*\}$ and we pick $\mu_B$ such 
that $\mu_B(\top^*)=(\top^*,K)$. Thus we get the Weak Challenge case in the Covering game.
If $f\in J$, we let $B=\{M_-^*,N_+^*,\top\}$ and we pick $\mu_B$ such 
that $\mu_B(M_-^*)=(\top^*,K)$. Thus we get the Weak Challenge case in the Covering game.

\end{proof}

\section{Concluding remarks}
There does not seem to be any reason, in principle, why the methods of this paper should not extend to trees of matroids with larger overlap. However, we have found that the most naive attempt to do this results in an explosion of the number of cases which must be dealt with.
This puts the necessary computations far beyond the bounds of what we could reasonably attempt.

It is clear that the success of our argument provides only weak evidence for the truth of the Packing/Covering conjecture in general. However, we think further evidence for this conjecture is given by the fact that our argument only just succeeds: an argument which resolved this case more straightforwardly would have suggested that the conclusion was an artifact of the tree structure, rather than relying on the matroidal structure.

There are some subtle issues of descriptive set theory surrounding the question of how much we have shown. If we have matroids $M_{\Psi_M}(T, M)$ and $M_{\Psi_N}(T, N)$ which are matroids {\em because} the sets $\Psi_M$ and $\Psi_N$ are Borel then our results allow us to deduce that $(M_{\Psi_M}(T, M), M_{\Psi_N}(T, N))$ satisfies Packing/Covering. The same comment applies if $\Psi_M$ and $\Psi_N$ are taken from some other class of determined sets closed under basic operations such as inverse images under continuous functions, for example the class of analytic sets if there is a measurable cardinal. But if $M_{\Psi_M}(T, M)$ and $M_{\Psi_N}(T, N)$ just happen to be matroids then it is not clear that they must satisfy Packing/Covering, because the games on whose determinacy our argument relies are quite different from the games whose determinacy witnesses that $M_{\Psi_M}(T, M)$ and $M_{\Psi_N}(T, N)$ are matroids.

If it could be shown that the determinacy of the latter collection of games implies that of the former, then this would significantly strengthen our result. On the other hand, a counterexample to this implication would give a counterexample to the Packing/Covering conjecture. Although for these reasons we are eager to resolve this issue, only the bare beginnings of an investigation into questions like this (regarding the relationship of determinacy of particular games outside well-behaved classes) has been made, in papers such as \cite{MR2178987} and \cite{MR2540945}.

\bibliographystyle{plain}
\bibliography{literatur}
\end{document}